\documentclass[11pt]{article}  
\usepackage{amsmath,amsthm}
\usepackage{amssymb}
\usepackage[margin=1.35in]{geometry}
\usepackage{bm,comment,hyperref}
\usepackage{natbib,enumitem}
\usepackage{graphicx,color}

\usepackage[plain,noend]{algorithm2e}

\newcommand{\diff}{\text{d}}

\newcommand{\vc}{\operatorname{vec}}

\newcommand{\SPD}{\operatorname{Sym}_{>0}}
\newcommand{\dd}{\text{d}}
\newcommand{\norm}[1]{\left\lVert#1\right\rVert}

\newcommand{\kbc}[1]{\textcolor{blue}{[KB: #1]}}

\newcommand{\fre}{\gamma_{\text{Fre}}}
\newcommand{\hatfre}{\hat \gamma_{\text{Fre}}}

\newtheorem{theorem}{Theorem}[section]
\newtheorem{corollary}[theorem]{Corollary}
\newtheorem{lemma}[theorem]{Lemma}
\newtheorem{proposition}{Proposition}[section]

\theoremstyle{definition}
\newtheorem{definition}[theorem]{Definition}
\theoremstyle{remark}
\newtheorem{example}{Example}[section]
\newtheorem{remark}{Remark}[section]



\makeatletter
\renewcommand{\algocf@captiontext}[2]{#1\algocf@typo. \AlCapFnt{}#2} 
\def\@algocf@capt@plain{top}
\renewcommand{\algocf@makecaption}[2]{%
  \addtolength{\hsize}{\algomargin}%
  \sbox\@tempboxa{\algocf@captiontext{#1}{#2}}%
  \ifdim\wd\@tempboxa >\hsize
    \hskip .5\algomargin%
    \parbox[t]{\hsize}{\algocf@captiontext{#1}{#2}}
  \else%
    \global\@minipagefalse%
    \hbox to\hsize{\box\@tempboxa}
  \fi%
  \addtolength{\hsize}{-\algomargin}%
}
\makeatother

\numberwithin{equation}{section}

\title{Rolled Gaussian process models for curves on manifolds}

\usepackage{bigfoot}

\DeclareNewFootnote{AAffil}[arabic]
\DeclareNewFootnote{ANote}[fnsymbol]

\usepackage{etoolbox}
\makeatletter
\patchcmd\maketitle{\def\@makefnmark{\rlap{\@textsuperscript{\normalfont\@thefnmark}}}}{}{}{}
\makeatother

\makeatletter
\def\thanksAAffil#1{
  \footnotemarkAAffil\protected@xdef\@thanks{\@thanks%
        \protect\footnotetextAAffil[\the \c@footnoteAAffil]{#1}}%
}
\def\thanksANote#1{%
  \footnotemarkANote%
  \protected@xdef\@thanks{\@thanks%
        \protect\footnotetextANote[\the \c@footnoteANote]{#1}}%
}
\makeatother

\author{
  S. P. Preston%
  \thanksAAffil{School of Mathematical Sciences, University of Nottingham, UK}%
    $^{,}$
   \thanksANote{simon.preston@nottingham.ac.uk}
  , %
  K. Bharath%
  \footnotemarkAAffil[1]
  , %
  P. C. L\'opez-Custodio%
  \thanksAAffil{Computer Science Department, Nottingham Trent University, UK}%
  ,
  A. Kume
  \thanksAAffil{School of Mathematics, Statistics and Actuarial Science, University of Kent, UK}
}
\date{}
\begin{document}

\maketitle

\begin{abstract}
Given a planar curve, imagine rolling a sphere along that curve without slipping or twisting, and by this means tracing out a curve on the sphere. It is well known that such a rolling operation induces a local isometry between the sphere and the plane so that the two curves uniquely determine each other, and moreover, the operation extends to a general class of manifolds in any dimension. We use rolling to construct an analogue of a Gaussian process on a manifold starting from a Euclidean Gaussian process with mean $m$ and covariance $K$, and refer to it as a rolled Gaussian process parameterized by $m$ and $K$. The resulting model is generative, and is amenable to statistical inference given data as curves on a manifold. We identify conditions on the manifold under which the rolling of $m$ equals the Fr\'echet mean of the rolled Gaussian process, propose computationally simple estimators of $m$ and $K$, and derive their rates of convergence. 
We illustrate with examples on the unit sphere, symmetric positive-definite matrices, and with a robotics application involving 3D orientations.
\end{abstract}


\section{Introduction}


Many modern applications generate data where the statistical unit is a curve $t\mapsto x(t) \in M$ on a manifold $M$. Examples include data as trajectories on spheres \citep{SSKS,zhang2018phase}, shape spaces \citep{chevallier2021coherent}, the space of covariance matrices \citep{zhang2018rate}, and the space of probability densities \citep{chen2017modelling}. 
Despite prevalence of such data, there is a dearth of simple generative statistical models that are suitable, and amenable for inference, for discretely observed data curves.  

For curves in a Euclidean space, $\mathbb{R}^d$, Gaussian processes offer an appealing model due to their characterization by a mean and a covariance function, computational tractability, and closure under marginalization and conditioning.  However, Gaussian processes rely on covariance operators defined using the global vector space structure of $\mathbb R^d$, which is absent on a $d$-dimensional nonlinear $M$. Having an analogue of a Gaussian process on $M$ will facilitate modelling and inference of manifold-valued curve data. A key objective of this paper is to define such a process. 


A strategy to address manifold nonlinearity is to consider a mapping between $M$ and $\mathbb R^d$, enabling a curve on $M$ to be identified with a curve on $\mathbb R^d$ and vice versa; these are respectively ``flattening'' and ``unflattening'' maps. Pairing a generative model for curves in $\mathbb R^d $ with suitable unflattening maps provides a generative model for curves on $M$. In the other direction, for inference given observed data that are curves on $M$, the flattening maps facilitate practical computations and statistical inference for the model parameters. We follow this strategy: for the model in $\mathbb R^d$ we use the Gaussian process, for unflattening we use a pair of maps termed ``rolling'' and ``wrapping'', and for flattening we use a reverse pair of maps termed ``unrolling'' and ``unwrapping''.  We define these four maps precisely below. 

The four maps are compatible with the intrinsic geometry of $M$, and are optimal in a sense clarified below in not introducing distortions. This is in contrast to identifying $\mathbb R^d$ with a tangent space at a single fixed point on $M$, and using tangent space projections between the tangent space and $M$, and vice versa, which causes distortions. Our maps provide a natural way to define a covariance, and hence a Gaussian process-type model, on $M$, starting with one in $\mathbb R^d$.

\subsection{Related work and contributions}


``Rolling'' of a curve in $\mathbb R^d$ onto $M$, and its reverse operation, ``unrolling'', originate from mathematical geometry, where they are known as \emph{antidevelopment} and \emph{development} respectively; see \citet{kobayashi1996foundations} and \citet{sharpe2000differential}. \citet{jupp1987fitting} first exploited the idea in statistics, to define smoothing curves for time-indexed data points on the unit sphere. \citet{kume2007shape} adapted their technique to shape spaces, and later
\citet{kim2021smoothing} generalized to Riemannian manifolds. These works, however, did not develop generative statistical models. 

Stochastic development and the definition of an intrinsic Brownian motion on $M$, \citep[e.g.][]{hsu2002stochastic} and stochastic differential equations in $M$, can be used to define generative diffusion models for curves on $M$ \citep[e.g.][]{sommer2019infinitesimal,sommer2020probabilistic}. However, the Markovian property of the models does not allow for rich covariance structures, and computational challenges abound: exact simulation is infeasible and numerical approximations are costly; likelihood functions are difficult to compute; and the infinitesimal formulation is not well-suited to modelling discretely observed curves.

Outside of the stochastic-differential-equation-based models, the recent papers have broadly focused on two different aspects of statistical analysis of data curves on $M$: (a) descriptive, where the focus has been on development and theoretical analysis of principal component analysis (PCA) with various notions of covariance of a random $M$-valued curve $x$ along a base curve $\gamma$, usually the Fr\'echet mean curve; (b) model-based, by wrapping a Gaussian vector field along a $\gamma$, but for which the practical and theoretical properties of estimators of model parameters are unclear. Relevant works with focus on aspect (a) include the extrinsic approach on embedded submanifolds $M$ by \citet{dai2018principal}, and intrinsic approaches by \cite{lin2019intrinsic} and \cite{shao2022intrinsic} with covariance defined using the family of tangent spaces along $\gamma$. Generality of the setting needed to develop the asymptotic theory of PCA in these papers impose restrictive assumptions that are usually difficult to verify in practice; for example, support of the distribution of $x$ is effectively restricted, either explicitly or via a condition on the Fr\'echet functional for $x$. In this setting, see also \citep{dubey2020functional, dubey2022modeling} for curve methodology in general metric space, not explicity using the differentiable structure of a manifold, $M$. 
Works related to (b) include papers by \cite{mallasto2018wrapped} and \cite{wang2024intrinsic} which either pre-specify $\gamma$ or estimate it separately from the model, and \cite{liu2024wrapped} which is focused on the regression setting in the presence of Euclidean covariates. In these, theoretical support for the estimators is limited and convergence rates are absent. 

Common to all papers that have focused on aspect (a) or (b) is a random vector field $V^\gamma$ along $\gamma$ with covariance defined with respect to the \emph{family of tangent spaces} along $\gamma$. Reconciling such abstract definitions of covariance \citep[e.g.][]{shao2022intrinsic,lin2019intrinsic} with the practicality of needing coordinates for computation is a key complication when developing a unified modelling framework with theoretical guarantees. 

The distinguishing feature of our approach lies in the \emph{joint} specification of the base curve $\gamma$ and the vector field $t\mapsto V^\gamma(t)$ along $\gamma$ starting from a Euclidean Gaussian process in a \emph{single tangent space}. The rolling procedure results in a $\gamma$ and a covariant Gaussian vector field $V^\gamma$ such that $V^\gamma(s)$ and $V^\gamma(t)$ are related by parallel transport along $\gamma$. 
Accordingly, our main contributions are as follows.
\begin{itemize}[leftmargin=*]
\itemsep 0em
    \item We use rolling to define an analogue of a Gaussian process $x$ on $M$ defined via the rolling of a Gaussian process $z$ with mean $m$ and covariance $K$ from a single tangent space, that is compatible with the intrinsic geometry of $M$. The model is equivariant to the choice of tangent space coordinates (Proposition \ref{prop:equivariance}). 
    \item We determine conditions on $M$ that ensure that the rolling of $m$ equals the Fr\'echet mean of $x$ (Theorem \ref{thm:frechet:mean}). From this, using  the reverse operations of unrolling and unwrapping onto a single tangent space, we propose simple estimators of $m$ and $K$ without having to choose coordinates for the family of tangent spaces along a curve. 
    \item We derive rates of convergence of the estimators for fully observed curves (Theorem \ref{thm:rates}) that match the optimal rates in the Euclidean setting under assumptions that are easy to verify and satisfied by several manifolds of interest in statistics. We also derive two results that are new in the literature: (i) Aided by the properties of the Gaussian process $z$, we prove $C^1$ convergence of the sample Fr\'echet mean curve to its population counterpart (Theorem \ref{thm:C1_convergence}), which improves upon the present state-of-the-art uniform convergence \citep{dai2018principal, lin2019intrinsic}; (ii) a holonomy bound with explicit cuvature-dependent constants for triangles in $M$, which is then used in deriving convergence rates for the estimators (Lemma \ref{lemma:holonomy}). 
    \item We consider a simple parametric rolled Gaussian model defined using basis functions in \S\ref{sec:parametric:model:for:curves:on:Rd} that is tailored for discretely observed curves, and
    in \S\ref{sec:robot:SO3} employ it on data curves of three-dimensional rotations  arising from a novel robotics application. 
\end{itemize}
Appendices A and B contain relevant definitions and computational details. Technical lemmas and proofs of all results are presented in Appendices C and D. 

\section{Preliminaries}
\label{sec:prelim}
We record relevant definitions from Riemannian geometry and refer to standard references \citep[e.g][]{petersen2016symmetric, lee2018introduction} for details. 

Let $M$ be a complete connected Riemannian manifold of dimension $d$. Denote by $T_pM$ the tangent space at each point $p\in M$ and by  $TM=\{T_p M:p\in M \}$ the collection of tangent spaces, known as the tangent bundle. On the tangent space $T_p M$ at each point $p$,  we can define an inner product $\langle\cdot,\cdot\rangle_p:T_p M\times T_p M\rightarrow \mathbb R_{\geq 0}$ with induced norm $\|\cdot\|_p$; the collection $\{\langle\cdot,\cdot\rangle_p:p\in M\}$ is referred to as a Riemannian metric. The Riemannian metric varies smoothly along the manifold and allows us to measure distances, volumes, and angles. The operator norm for maps $B:T_pM \to T_pM$ is defined as $\|B\|_{\text{op}}:=\sup\{\|Bv\|_p:v \in T_pM, \|v\|_p=1\}$. 

For a curve $\gamma:[0,1]\rightarrow M$, with image $\gamma([0,1]) \subset M$, its velocity at $t$ is $\dot\gamma(t) := \mathrm{d} \gamma(t) \slash \mathrm{d} t\in T_{\gamma(t)}M$, and its length $L(\gamma)$ is the scalar $\int_0^1 \|
\dot{\gamma}(t)\|^{1/2}_{\gamma(t)} \diff t$.
For all curves in this paper, the parameter domain is the unit interval, $[0,1]$. Often, but not always, $t$ is interpreted as time; we refer to it thus in the following. Depending on the context, we will denote a curve by both $\gamma$ or $\gamma(t)$. 

Vector fields are smooth maps $X:M \to TM$ that map a point $p \in M$ to a vector $X(p) \in T_pM$. 
The Riemannian metric defines a unique affine connection called the Levi-Civita connection $\nabla$, known as the covariant derivative, which describes how vectors in tangent spaces change as one moves along a curve $\gamma$ from one point to another so that $\nabla_XY$ is the covariant derivative of a vector field $Y$ with respect to another $X$. 
We use $\nabla_p X(p) \in T_pM$ to denote the covariant derivative of $\nabla_XX$ at the point $p$. Similarly, $\nabla_p^2X(p)$ is the second covariant derivative, or the Hessian, of the vector field $X$ at $p$. 

A vector field $t \mapsto V(t) \in T_{\gamma(t)}M$ along a curve $\gamma$ is said to be parallel if $\nabla_{\dot{\gamma}(t)} V(t)=0$ for all $t$. Given a curve $\gamma:[0,1] \to M$ and $u \in T_{\gamma(t_0)}M$ for $t_0 \in [0,1]$, there exists a unique parallel vector field $V$ such that $V(t_0)=u$.
Vectors in different tangent spaces are identified using \emph{parallel transport} along a curve $\gamma$: the parallel transport of $u \in T_{\gamma(s)} M$ along $\gamma$ to $T_{\gamma(t)} M$ is $V(t)$, where $V$ is the unique parallel vector field determined by $V(s)=u$. This engenders a linear isometry $P_{s \rightarrow t}^\gamma: T_{\gamma(s)}M \to T_{\gamma(t)}M, \quad P_{t \leftarrow s}^\gamma(u)=V(t)$, between tangent spaces at points along $\gamma$, such that $\langle u,w \rangle_{\gamma(s)}=\langle P_{s \rightarrow t}^\gamma(u), P_{s \rightarrow t}^\gamma(w) \rangle_{\gamma(t)}$ for every $u,w \in T_{\gamma(s)} M$. Its inverse is denoted by $P_{s \leftarrow t}^\gamma$. 

For a function $f:M \to \mathbb R$,  we use $\nabla_p f(p)$ to denote the gradient of $f$ at $p$, which is defined implicity via the relation $\text{d}f(x)(v)=\langle \nabla_pf(p),v\rangle_p$, where $\text{d}f(x):T_pM \to \mathbb R$ is the directional derivative of $f$ at $p$ along $v$. The Hessian of $f$ at $p$ then is the bilinear map $\nabla^2_pf(p):T_pM \times T_pM \to \mathbb R$ with the corresponding Hessian operator $H(p):T_pM \to T_pM$. For $t \in [0,1]$, we abuse notation and also denote as $\nabla_t f(c(t))$ and $\nabla^2_t f(c(t))$, the gradient and Hessian of $f$ at $c(t)=p$, where $c:[0,1] \to M$ is a curve. The notation is helpful in representing derivatives of a function $f:M \times [0,1] \to \mathbb R, (p,t) \mapsto f(p,t)$ with respect to either of the arguments.

Curves $\gamma$ with zero acceleration such that $\nabla_{\dot{\gamma}(t)}\dot{\gamma}(t) =0$ for all $t$ are called \emph{geodesics}. The distance $\rho$ between points $p$ and $q$ is the length of a segment of a geodesic curve connecting the two, known as the minimal geodesic: $\rho(p,q)=\inf \{L(\gamma)| \gamma:[0,1] \to M;\gamma(0)=p,\gamma(1)=q\}$. 

 Given a point $p$ and a geodesic $\gamma$ with $\gamma(0)=p$, a cut point of $p$ is defined as the point $\gamma(t_0)$ such that $\gamma$ is a minimal geodesic on the interval $[0,t_0]$ but fails to be for $t>t_0$. The set of cut points of geodesics starting at $p$ is its \emph{cut locus} $\mathcal C(p)$, and we also use $\mathcal C(A)=\{\mathcal C(p): p \in A \subseteq M\}$ to denote the the cut locus of a set. The injectivity radius $i(p)\in[0,\infty]$ of $p$ is its distance to $\mathcal C(p)$, and the injectivity radius $i(A)$ of $A \subseteq M$ is the infimum of the injectivity radii of points in $M$. 

The next two tools are necessary for moving between $M$ and its tangent spaces. Geodesics on $M$ are used to define the \emph{exponential map} $\exp:TM\to M$, which maps a point $(p,v)$ in the tangent bundle to a point on $M$; in particular, when restricted to $T_pM$ it is defined via a geodesic $\gamma:[0,1] \to M$ starting at $\gamma(0)=p$ with initial velocity $\dot \gamma(0)=v$ as $\exp_p:T_p M \to M, \exp_p(v)=\gamma(1)$. From the Hopf-Rinow theorem, on a complete manifold the exponential map is surjective. On an star shape region around the origin in $T_pM$ this map is a diffeomorphism onto its image outside of the cut locus of $p$, and a well-defined inverse $\exp^{-1}_p:M \to T_p M$ exists, known as the \emph{inverse exponential} or logarithm map, that maps a point on $M$ outside of $\mathcal C(p)$ to $T_pM$; thus for any $q$ outside of $\mathcal C(p)$, $\rho(p,q)=\|\exp_p^{-1}(q)\|_p$. Relatedly, the \emph{tangent cut locus} $\mathcal T(p) \subset T_pM$ of $p \in M$, is defined by the relation $\exp_p(\mathcal T(p))=\mathcal C(p)$.

There are many notions of curvature of a Riemannian manifold. We will mainly be concerned with \emph{sectional curvature} at a point $p$, $\sec{(p)}$, defined to be the Gaussian curvature at $p$ of the two-dimensional surface swept out by the set of all geodesics starting at $p$ with initial velocities lying in the two-dimensional subspace of $T_pM$ spanned by two linear independent vectors. For $A \subset M$, the notation $\sec{(A)}$ represents $\{\sec(p):p \in M\}$ so that $k_0 \leq \sec{(A)} \leq k_1$ implies that the sectional curvature of every $p \in A$ is upper and lower bounded by $k_1$ and $k_0$, respectively. Throughout, we will simply use curvature to mean sectional curvature, and describe a manifold $M$ as positively (non-positively) curved if its curvature at every point is positive (non-positive). Non-positively curved $M$ have empty cut loci with $i(M)=\infty$ so that $\exp_p:T_p M \to M$ is a global diffeomorphism at every $p$. 

The \emph{Fr\'echet mean} of a probability measure $\nu$ on $M$ is defined as the minimizer of Fr\'echet functional
\begin{equation}
   y \mapsto  \int_M\rho^2(y,x) \, \text{d}\nu (x); 
\label{Eqn:FM_definition}
\end{equation}
sufficient conditions that ensure existence and uniqueness of the Fr\'echet mean on manifolds are well-known \citep[e.g.][]{afsari2011riemannian} and are assumed henceforth. When $\nu$ is the empirical measure of a sample of points in $M$, the minimizer of the Fr\'echet functional with respect to the empirical measure is the \emph{sample Fr\'echet mean}. Under suitable conditions the sample Fr\'echet mean converges almost surely to its population counterpart \citep{ziezold1977expected}. Abusing terminology, we also speak of the Fr\'echet mean of a random variable $x$ where $x \sim \nu$; context will disambiguate the two. For a random curve $x:[0,1] \to M$ the \emph{population Fr\'echet mean curve}, $\gamma_\text{Fre}:[0,1] \to M$, is defined as the Fr\'echet mean of $x$ pointwise in $t$; that is, minimizing 
\begin{equation}
  p \mapsto F(p,t) := \mathbb{E}[\rho^2(p,x(t))],
\label{eq:Fre_population}
\end{equation}
For a sample of curves, $\{x_1, \ldots x_n\}$, the \emph{sample Fr\'echet mean curve} is  $\hat{\gamma}_\text{Fre}:[0,1] \to M$, minimizing
\begin{equation}
 p \mapsto F_n(p,t) := \frac{1}{n} \sum_{i=1}^n \rho^2(p,x_i(t)).
\label{eq:Fre_mean_curve}
\end{equation}
We assume that $\fre$ exists and is unique for the rolled Gaussian model we propose (assumption A1). No such assumption will be made about its sample counterpart. 

Two types of manifolds are of particular interest in this work, especially when deriving the asymptotic properties of estimators of the model parameters: Hadamard manifolds and symmetric spaces. Simply connected, complete manifolds with non-positive sectional curvatures everywhere are known as \emph{Hadamard manifolds}. 
A \emph{symmetric space} is a homogeneous manifold $M$ in which for every $p \in M$ there is a unique isometry $\sigma_p:M \to M$ such that on $T_pM$ its differential,  $\text{d}\sigma_p=-\text{id}$, the negative of the identity map; thus, $\sigma_p$ reverses geodesics $\gamma$ through $p$ so that if $\gamma(0)=p$, then $\sigma_p(\gamma(t))=\gamma(-t)$. Symmetric spaces are classified as being of compact type or non-compact type, wherein the former is positively curved while the latter is non-positively curved.
A function $g:M \to \mathbb R$ on a symmetric manifold $M$ is said to be \emph{symmetric} if $g(p)=g(\sigma_p(p))$ for every $p \in M$. Relatedly, a distribution $\nu$ on $M$ is said to possess \emph{geodesic symmetry} about $ p\in M$, when $\nu=(\exp_{p})_\# \lambda$, where $\lambda$ is a mean-zero distribution on $T_pM$ with Lebesgue density $f$ that satisfies $f(v)=f(-v)$ for every $v \in T_pM$. The measure $\nu$ on $M$ defined as a pushforward under $\exp_p$ of a measure on $T_pM$ is commonly referred to as a wrapped distribution on $M$.




\section{The unrolling, rolling, unwrapping, and wrapping maps} 
\label{sec:four:maps}

Prior to defining the four maps for curves, we
introduce some preparatory notation and definitions.  The \emph{unrolling into the tangent space of $\gamma(0)$} of a piecewise smooth curve $\gamma:[0,1] \to M$ is the unique curve $\gamma^{\downarrow}:[0,1] \to T_{\gamma(0)}(M)$ with $\gamma^{\downarrow}(0)=0$, determined by the differential equation
\begin{equation} \label{eqn:unrolling:defn}
\dot{\gamma}^{\downarrow}(t)= P^\gamma_{0 \leftarrow t} \dot{\gamma}(t).
\end{equation}
Conversely, $\gamma$ on $M$ is determined by $\gamma^\downarrow$, and we refer thus to  $\gamma$ as the \emph{rolling} of the curve 
$\gamma^\downarrow$ from $T_{\gamma(0)}(M)$ onto $M$. Uniqueness of the parallel transport maps along $\gamma$ in \eqref{eqn:unrolling:defn} \citep[Theorem 4.32, ][]{lee2018introduction} ensures that the unrolling and rolling operations are well-defined, and are inverses of each other. Figure \ref{fig:roll_unroll_piecewise_geodesic}(a) provides an illustration of the unrolling of a piecewise geodesic curve on $\mathbb S^2$ onto the tangent space of its initial point. The process of unrolling has an intuitive physical analogy \citep{jupp1987fitting}: imagine that the curve $\gamma:[0,1] \to \mathbb S^2$ on the unit 2-sphere $\mathbb S^2$ is drawn in wet ink; its unrolling onto the plane $\mathbb R^2$, when identified with $T_{\gamma(0)}M$, is the trace of the wet ink left on $\mathbb R^2$ upon rolling $\mathbb S^2$ along $\gamma$ on $\mathbb R^2$ without slipping. Slipping is avoided by use of the parallel transport maps, which ensure that the tangent vectors $\dot \gamma(t)$ and $\dot \gamma^\downarrow(t)$ line up at each $t$. When $\mathbb S^2$ is viewed as an embedded submanifold of $\mathbb R^3$, the extra dimension demands that in addition to no slipping one also avoids ``twisting" \citep{sharpe2000differential}. Thus, 
it is appropriate to view $\gamma$ and $\gamma^\downarrow$ as two equivalent copies of the same curve. 


For piecewise geodesic curves, as in the example in Figure \ref{fig:roll_unroll_piecewise_geodesic}, a key feature of unrolling, and hence also of the rolling, is that segment lengths and angles between consecutive segments along $\gamma$ are preserved in $\gamma^\downarrow$, and vice versa. In contrast, a global flattening strategy by projecting the entire curve $\gamma$ onto $T_{\gamma(0)}M$ via the inverse exponential map would distort these angles and lengths in its 
image $\exp^{-1}_{\gamma(0)}(\gamma)$ relative those in $\gamma$. There is thus no such distortion when unrolling. For modelling purposes, therefore, a mean curve on $M$ may be conveniently defined as the rolling of a curve prescribed on a tangent space.

\begin{figure}[!t]
\begin{center}
\includegraphics[scale=0.35, trim={0cm 2cm 0cm 0.5cm},clip]{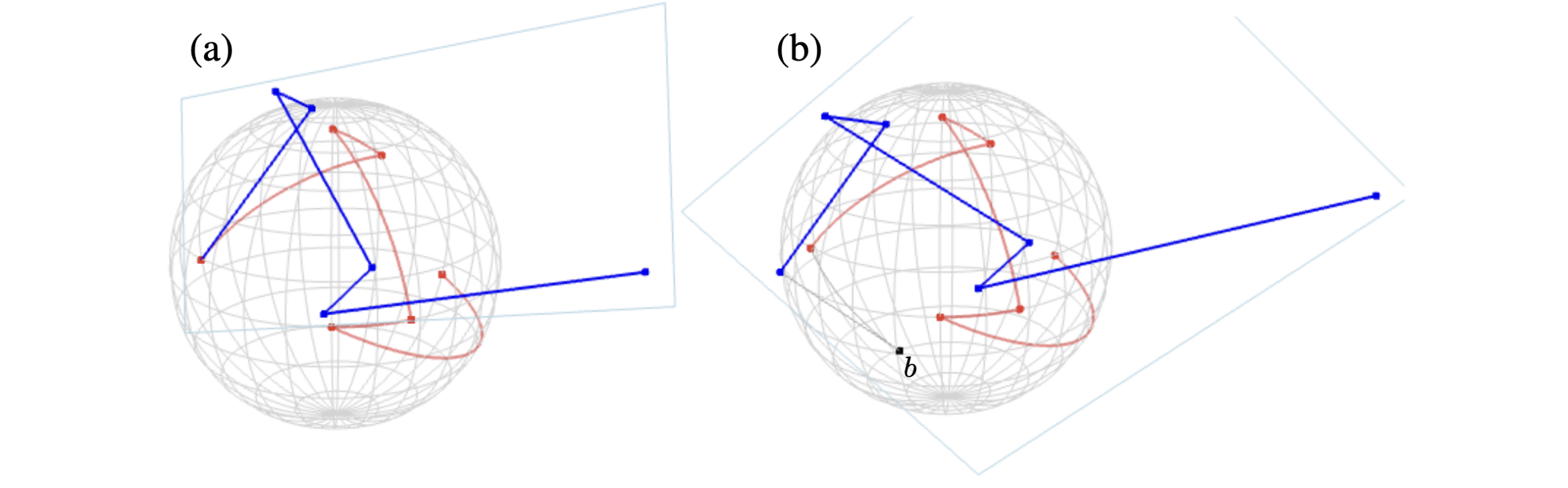}
\caption{A piecewise geodesic curve $\gamma$ on $\mathbb S^2$, red, and its unrolling, blue, into a piecewise linear curve; (a) shows the unrolling $\gamma^{\downarrow}$ into the tangent space $T_{\gamma(0)}\mathbb S^2$ at the initial point of the curve, and (b) shows the unrolling $\gamma_b^{\downarrow}$, incorporating a translation shown in grey, into the tangent space, $T_{b}$, at a prescribed point, $b$. Unrolling, and its inverse, rolling, preserve consecutive inter-point distances on $\gamma$ and $\gamma^\downarrow$.}
\label{fig:roll_unroll_piecewise_geodesic}
\end{center}
\end{figure}


To define a suitable covariance, we require a notion, compatible with rolling/unrolling, of how a curve $x$ on $M$ deviates 
relative to another fixed curve $\gamma$ on $M$. To begin with, we consider the deviation of a \emph{point}, $x$, that is associated with with a particular $t \in [0,1]$. Accordingly, relative to $\gamma(t)$, the \emph{unwrapping of point} $x$, assumed to be outside the cut locus $\mathcal C\{\gamma(t)\}$ of $\gamma(t)$, onto $T_{\gamma(0)}M$ defined as
\begin{align}
(t,x) \mapsto P^\gamma_{0 \leftarrow t}
\left\{ \exp^{-1}_{\gamma(t)} (x) \right\} \in T_{\gamma(0)}M.
\label{eqn:unwrap}
\end{align}
The reverse operation, for a point $y$ on $T_{\gamma(0)}M$ associated with $t \in [0,1]$ is the \emph{wrapping of point} $y \in T_{\gamma(0)}M$ onto $M$ with respect to the curve $\gamma$:
\begin{align}
(t, y) \mapsto \exp_{\gamma(t)} \left( P^\gamma_{t \leftarrow 0} y \right) \in M. 
\label{eqn:wrap}
\end{align}
Given a curve $\gamma$ on $M$, wrapping and unwrapping operations are inverses of each other outside of $\mathcal C(\gamma(0))$. 
\begin{definition}
 \label{def:cutlocus}
 Two curves $x$ and $y$ are said to be ``outside the cut loci of each other'' if for every $t$, $x(t)$ lies outside of $\mathcal{C}\{y(t)\}$, the cut locus of $y(t)$. 
 \end{definition}

It is possible to wrap an entire curve onto $M$, and not just a point, with respect to a fixed curve $\gamma$, by applying \eqref{eqn:wrap} pointwise for every $t \in [0,1]$. The same applies to unwrapping of a curve $x$ with respect to $\gamma$ using \eqref{eqn:unwrap}, when $x$ and $\gamma$ are outside the cut loci of each other. 
Consequently, we are hence in a position to define as follows the four maps central to this paper, which map curves from a manifold to a tangent space and vice versa.

These four maps are generalizations of those defined in \citet{kim2021smoothing} in several respects: first, rather than being defined for points, all of them map curves to curves; second, they are with respect to general curves, not necessarily piecewise geodesic curves; and third, we include a ``translation'', via a parallel transport operation along a unique geodesic between $b$ and $\gamma(0)$, such that the ``flat space'' in which we work is the tangent space $T_bM$ at some prescribed fixed point $b \in M$---see for example Figure \ref{fig:roll_unroll_piecewise_geodesic}(b). This proves important later, when we take $\gamma$ to play the role of the mean curve and seek to estimate it from data; the translation to $T_bM$ means that computations are in a fixed tangent space rather than a tangent space that depends on parameters being estimated.



\begin{definition}[The four maps]
  \label{defs}
Let $\mathcal F$ and $\mathcal F_b$ be the space of piecewise smooth curves on $M$ and $T_bM$, both with parameter domain $[0,1]$, where $b \in M \backslash \mathcal C(\gamma(0))$. Let $\gamma \in \mathcal F$, and let $c:[0,1] \to M$ be a geodesic from $b$ to $\gamma(0)$. 
 \begin{enumerate}
 \itemsep 1em
     \item [(a)]The \emph{unrolling} map $\mathcal F \to \mathcal F_b$ transforms a curve $\gamma$ into a curve $\gamma^\downarrow_b$, with 
     ${\gamma}^{\downarrow}_b(0) = \exp^{-1}_b\{ \gamma (0) \}$, satisfying 
\begin{equation} \label{eq:translated:unrolling}
\dot{\gamma}^{\downarrow}_b(t) 
=P^c_{0 \leftarrow 1}P^\gamma_{0 \leftarrow t} \dot{\gamma}(t).
\end{equation}

\item [(b)] The \emph{rolling} map $\mathcal F_b \to \mathcal F$ is the unique inverse of the unrolling map.
\item [(c)]
The \emph{unwrapping} map $\mathcal F \to \mathcal F_b$ transforms a curve $x$, with respect to another curve $\gamma \in \mathcal F$, into the curve 
\begin{align}
  x_b^{\downarrow \gamma}(t) = \gamma^\downarrow_b(t) + 
 P^c_{0 \leftarrow 1}  P^\gamma_{0 \leftarrow t}
\left\{ \exp^{-1}_{\gamma(t)} (x(t)) \right\},
\label{eqn:translated:unwrapping}
\end{align}
whenever $x$ is outside the cut loci of $\gamma$.

\item [(d)]
The \emph{wrapping} map $\mathcal F_b \to \mathcal F$ transforms a curve $y_b$, with respect to another curve $\gamma \in \mathcal F$, into the curve 
\begin{align}
y_b^{\uparrow \gamma}(t) = \exp_{\gamma(t)} \left[ P^\gamma_{t \leftarrow 0} P^c_{1 \leftarrow 0}  \left\{ y_b(t) - \gamma^\downarrow_b(t) \right\}\right].
\label{eqn:translated:wrapping}
\end{align}


\end{enumerate}
\end{definition}

\begin{remark}
Unwrapping of a curve $x$ on $M$ with respect to $\gamma$ on $M$ can be viewed as a two-step process: (i) obtain the vector field along $\gamma$ pointing towards $x$ via the inverse exponential map $\exp^{-1}_{\gamma(t)}(x(t))$; (ii) apply the unrolling map in Definition \ref{defs}(a) to the integral curve of this vector field, and translate to a new origin in $T_bM$. Consequently, the unwrapping map reduces to the unrolling map when $x=\gamma$ in \eqref{eqn:translated:unwrapping}; in other words, the unwrapping of a curve with respect to itself is its unrolling. 
\end{remark}

\begin{remark}
\label{remark:inverse:relation:of:unwrapping:wrapping}
Provided $x$ in (c) is outside the cut loci of $\gamma$, the wrapping map is the unique inverse of the unwrapping map. Provided $y_b$ in (d) is such that 
the argument of $\exp_{\gamma(t)}$ in 
\eqref{eqn:translated:wrapping}
is outside the tangential cut locus $\mathcal{T}\{\gamma(t)\}$ for all $t$, then the unwrapping map is the unique inverse of the wrapping map. For manifolds with empty cut loci, unwrapping and wrapping are hence always unique inverses. An example of such a manifold is the set of symmetric positive definite matrices considered in an example in \S\ref{example:rolled:model:on:SPD}.
\end{remark}

\begin{remark}
The maps in Definition \ref{defs}
either flatten or unflatten curves, and the superscript arrow notation introduced in the definitions is a convenient shorthand notation to make the distinction.
\begin{enumerate}
    \item [(i)] The unrolling and unwrapping maps are \emph{flattening} maps,  indicated by having downarrow `$\downarrow$' within superscript. Downarrow by itself in superscript means unrolling, whereas downarrow accompanied by a curve, say $\gamma$, means ``unwrapping with respect to $\gamma$''. For example, $x_b^{\downarrow}$ is the unrolling of curve $x$ into $T_bM$; and $x_b^{\downarrow \gamma}$ is the unwrapping of $x$ with respect to $\gamma$.
    Since unwrapping a curve with respect to itself is its unrolling, thus $x_b^{\downarrow x} = x_b^{\downarrow}$.
    \item[(ii)] The rolling and wrapping maps are \emph{unflattening} maps, indicated by uparrow `$\uparrow$' within superscript. Uparrow by itself in superscript means rolling,
    whereas uparrow accompanied by a curve, say $\gamma$, means ``wrapping with respect to $\gamma$''. For example, for a curve $y_b$ in $T_bM$, the curve $y_b^\uparrow$ is its rolling onto $M$; and $y_b^{\uparrow \gamma}$ is the wrapping of $y_b$ with respect to $\gamma$. 
\end{enumerate}
\end{remark}

\section{Rolled Gaussian process model}
\label{sec:rolled:GP}
The maps in the preceding section help define a generative model for random curves on $M$ from a model for curves in $\mathbb R^d$. Let $z(t)$ be a process in $\mathbb R^d$ with $E \{z(t)\} = m(t)$, and $y_b(t) = U z(t)$ be a process in $T_bM$ with $E \{y_b(t)\}=m_b(t) = U m(t)$ for an orthonormal frame $U: \mathbb R^d \rightarrow T_bM$. Let the ``rolled mean'' $\gamma = m_b^\uparrow$ be the  rolling of $m_b$ onto $M$, and $x = y_b^{\uparrow \gamma}$ be the wrapping of $y_b$ with respect to $\gamma$. In terms of notation, for a fixed $b\in M$: 
\begin{itemize}[leftmargin=*]
\itemsep 0em
    \item A subscript $b$ without a superscript distinguishes fixed or random curves and parameters on $T_bM$; 
    \item $z$ denotes an $\mathbb R^d$-valued stochastic process, and $x$ an $M$-valued process.
\end{itemize}

The following result elucidates on conditions under which the rolled mean $m_b^\uparrow$ equals the population Fr\'echet mean $\fre$ of $x$.

\begin{theorem}[Rolled mean coinciding with the Fr\'echet mean]
\label{thm:frechet:mean}
Assume that for every $t \in [0,1]$, the Fr\'echet mean of $x(t)$ exists and is unique. 
Then, for every $t$, the rolled mean, $\gamma(t)$, is the Fr\'echet mean of $x(t)$ if any of the following conditions are true:
\begin{enumerate}
\itemsep 0em
    \item [(i)] $M$ is a Hadamard manifold;
    \item[(ii)] $M$ is a symmetric space, the distribution of $x(t)$ has geodesic symmetry about $\gamma(t)$, and has a unique Fr\'echet mean that lies outside the cut locus of $\gamma(t)$.
\end{enumerate}
\end{theorem}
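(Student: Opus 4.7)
The plan is to use the first-order critical-point characterization of the Fr\'echet mean in case (i), and a fixed-point/symmetry argument in case (ii). In both cases the goal reduces to showing that $\gamma(t)$ minimises $p \mapsto F(p,t)= \mathbb{E}[\rho^2(p,x(t))]$.

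For case (i), recall that on a Hadamard manifold $\exp_p$ is a global diffeomorphism for every $p$, so the squared distance has the globally valid gradient $\nabla_p \rho^2(p,q) = -2\exp_p^{-1}(q)$; in particular $\nabla_p F(p,t) = -2\,\mathbb{E}[\exp_p^{-1}(x(t))]$. I would evaluate this at $p=\gamma(t)$ by using the wrapping definition (d), which reads
\[
x(t) = \exp_{\gamma(t)}\!\bigl[P^\gamma_{t\leftarrow 0}P^c_{1\leftarrow 0}\{y_b(t)-\gamma_b^\downarrow(t)\}\bigr],
\]
so $\exp_{\gamma(t)}^{-1}(x(t)) = P^\gamma_{t\leftarrow 0}P^c_{1\leftarrow 0}\{y_b(t)-\gamma_b^\downarrow(t)\}$. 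Since the parallel transports are deterministic linear isometries they pass through the expectation, while $\mathbb{E}[y_b(t)] = m_b(t)$ and, because $\gamma=m_b^\uparrow$ and rolling/unrolling are mutual inverses (Definition \ref{defs}(b)), $\gamma_b^\downarrow(t)=m_b(t)$. Hence $\mathbb{E}[\exp_{\gamma(t)}^{-1}(x(t))]=0$ and $\gamma(t)$ is a critical point. On a Hadamard manifold $p\mapsto \rho^2(p,q)$ is strictly geodesically convex for every $q$, so $F(\cdot,t)$ is strictly convex and the critical point is the unique global minimiser, which by hypothesis is the (assumed unique) Fr\'echet mean.

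For case (ii), let $q:=\gamma(t)$ and let $\sigma_q$ be the geodesic symmetry at $q$. Since $\sigma_q$ is an involutive isometry and, by geodesic symmetry of $x(t)$ about $q$, $\sigma_q(x(t))\stackrel{d}{=}x(t)$, we have
\[
F(\sigma_q(p),t)=\mathbb{E}[\rho^2(\sigma_q(p),x(t))]=\mathbb{E}[\rho^2(p,\sigma_q(x(t)))]=F(p,t).
\]
Thus if $q^*$ is the unique Fr\'echet mean of $x(t)$, then so is $\sigma_q(q^*)$, and uniqueness forces $\sigma_q(q^*)=q^*$. It remains to identify the fixed points of $\sigma_q$ outside $\mathcal{C}(q)$: if $q^*=\exp_q(rv)$ with $\|v\|_q=1$ and $r=\rho(q,q^*)$, then $\sigma_q(q^*)=\exp_q(-rv)$; equating the two expressions would produce two distinct unit-speed minimal geodesics from $q$ to $q^*$ (with initial velocities $\pm v$) unless $r=0$, contradicting uniqueness of the minimal geodesic outside the cut locus. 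Hence $q^*=q=\gamma(t)$.

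The only genuinely delicate step is the fixed-point argument in case (ii): one must invoke the outside-cut-locus hypothesis precisely to rule out a second minimal geodesic, and this is where all the symmetric-space hypotheses are used in concert. The other ingredients (global invertibility of $\exp$ under Hadamard, strict convexity of squared distance, commutation of expectation with the deterministic parallel transports, and $\gamma_b^\downarrow = m_b$) are either standard or immediate from Definition \ref{defs}.
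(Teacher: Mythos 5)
Your proof is correct and follows essentially the same route as the paper: for (i) the first-order condition $\mathbb{E}[\exp_{\gamma(t)}^{-1}(x(t))]=0$ via the isometric parallel transports plus strict convexity of $F(\cdot,t)$ on a Hadamard manifold, and for (ii) the $\sigma_{\gamma(t)}$-invariance of the Fr\'echet functional combined with the fixed-point/unique-minimal-geodesic argument outside the cut locus. The only (cosmetic) difference is that in (ii) you dispense with re-verifying that $\gamma(t)$ is a stationary point, which the paper includes but which is indeed not needed once the symmetry and the outside-cut-locus hypothesis are in place.
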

Theorem \ref{thm:frechet:mean} holds for general processes $z$ in $\mathbb R^d$ with finite mean, not necessarily Gaussian. It plays an important role in \S\ref{sec:asymp} where estimators of the mean function $m$ and covariance $k$ of a rolled Gaussian process are proposed and their asymptotic properties are studied. Manifolds in (i) of Theorem \ref{thm:frechet:mean} include the $d$-dimensional hyperbolic space $\mathbb H^d$ with the hyperbolic metric and the space $\text{Sym}_{>0}(d)$ of $d \times d$ real symmetric positive definite matrices with several choices of Riemannian metrics; examples of symmetric manifolds relevant to condition (ii) are the non-compact $\mathbb H^d$ and $\text{Sym}_{>0}(d)$, and compact manifolds such as $\mathbb S^2$, rotation matrices, and Lie groups with bi-invariant metrics. 


%

We now define the rolled Gaussian process model. Let $z \sim GP(m,K)$ be a time-indexed Gaussian process in $\mathbb R^d$ with mean curve $t \mapsto m(t)=\{m^1(t),\ldots,m^d(t)\}^\top \in \mathbb R^d$ and covariance $(t,t') \mapsto K(t,t') \in \text{Sym}_{>0}(d)$. The law of $z$ is characterised by the requirement that for every finite $r$ and times $t_1,\ldots,t_r$, the $d \times r$ matrix $Z:=\{z(t_1),\ldots,z(t_r)\}$ satisfies $\operatorname{vec} Z \sim N_{dr}(\vc M, \Sigma)$, where $M:=\{m(t_1),\ldots,m(t_r)\} \in \mathbb R^{d \times r}$, and $\Sigma$ is block diagonal with $K(t_i,t_j) \in \text{Sym}_{>0}(d)$ in the $(i,j)$th block. 

Given $b \in M$ and a curve $\gamma$ on $M$, let $U=\{u_1,\ldots,u_d\}$ be an orthonormal basis on $T_b M$ in its matrix representation. With 
$m_b(t)=U m(t):=\sum_{i=1}^d m^i(t)u_i$ and $K_b(t,t')=U K(t,t')U^\top$, consider the process $y_b(t):=U z(t)$. On the inner product space $(T_bM,\langle \cdot, \cdot \rangle_{b})$ note that $y_b(t)$ is a Gaussian random vector with mean $m_b(t)$ and covariance determined by the nonnegative linear operator $K_b(t,t):T_{b}M \to  T_{b}M$; see Appendix \ref{sec:Gaussian} for details on Gaussians on inner product spaces. 
As a consequence, $y_b(t):=U z(t) \sim \mathcal{GP}(m_b, K_b)$ assuming values in $T_{b}M$. We can thus identify a Gaussian process in $T_{b} M$ with a Gaussian process $z \sim \mathcal{GP}(m,K)$ in $\mathbb R^d$, that when transformed using the wrapping map in Definition \ref{defs}(b) with respect to a curve $\gamma$ in $M$, results in a random curve on $M$ whose law is determined by the pair $(b,U)$ and parameters $(m, K)$.\\

\begin{definition}[Rolled Gaussian process]
 Let $z \sim GP(m,K)$, and choose $b \in M$ and a frame $U:\mathbb R^d \to T_bM$. 
 With $m_b=Um$ as a curve in $T_bM$, let $\gamma:= m_b^\uparrow$ be the rolling of $m_b$, and let $x=y_b^{\uparrow \gamma}$ be the wrapping of $y_b:=Uz$ with respect to $\gamma$. The random curve $x$ on $M$ is a rolled Gaussian process, denoted $x \sim \mathcal{RGP}(m, K; b, U)$.
\end{definition}

Of particular note in the definition above is the fact that the Gaussian process $y_b$ in $T_bM$ is \emph{wrapped with respect to the rolling of its mean} function $m_b$. This is a convenient modelling feature because the curve $\gamma$ then is not an additional parameter to be estimated, but is  instead fully determined by the mean of $z$. Figure \ref{fig:Rd:TM:M:schematic} illustrates the construction of the rolled Gaussian process and notation.

\begin{figure} [!t]
    \centering
  \includegraphics[scale = 0.35]{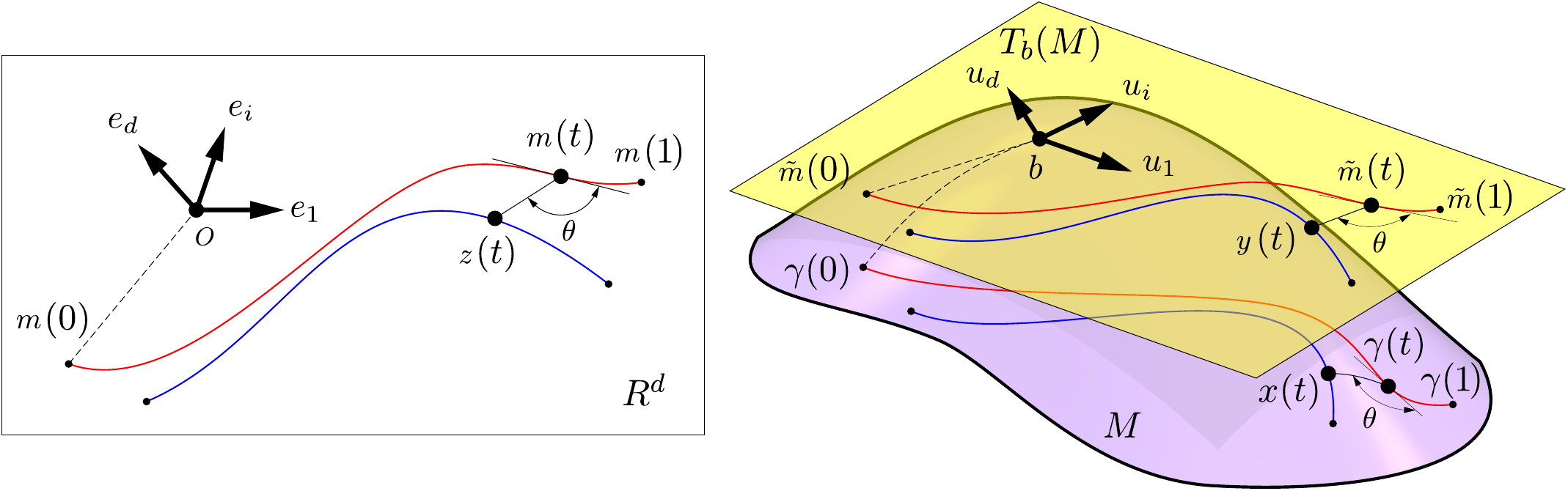}
  \caption{Illustration of how curves defined in $\mathbb R^d$ are identified on $M$, via an intermediate tangent space $T_bM$. Red is the mean curve with respect to which the (un)flattening is performed, and blue is a different curve that deviates from the mean curve. The line segment connecting points  between the blue and red curves at arbitrary $t$, 
  and the corresponding angle denoted $\theta$, indicate distances and angles preserved by the (un)flattening.}
\label{fig:Rd:TM:M:schematic}
\end{figure}

The rolled Gaussian process involves a random vector field along $\gamma$ arising from the wrapping operation. Proposition \ref{prop:vector_field} below identifies it as a Gaussian vector field, which we claim is a natural one. To understand why, we note that if Gaussian random variables $v(p) \sim N_d(\mu(p), \Sigma(p))$ on $T_pM$ and $v(q) \sim N_d(\mu(q), \Sigma(q))$ on $T_qM$ are such that
\[
\mu(q)=P^\gamma_{0 \to 1}\mu(p), \quad \Sigma(q)=P^\gamma_{0 \to 1} \Sigma(p)P^\gamma_{0 \leftarrow 1},
\]
where $\gamma:[0,1] \to M$ is a piecewise smooth curve taking $\gamma(0)=p$ to $\gamma(1)=q$. Then, $v(q)\overset{d}=P^\gamma_{0 \to 1}v(p)$ and it is easy to see using geodesic normal coordinates that $v(q)$ and $v(p)$ are Gaussian random vectors on two isometric copies of $\mathbb R^d$. Back to our setting, consider the random field $t \mapsto V^\gamma(t) \in T_{\gamma(t)}$, $V^\gamma(t):=P^\gamma_{t \leftarrow0}P^c_{1\leftarrow 0}(y_b(t)-m_b(t))$. The tangent vectors $V^\gamma(s) \in T_{\gamma(s)}M$ and $V^\gamma(t) \in T_{\gamma(t)}M$ are related via parallel transport maps, and each a zero-mean Gaussian since parallel transports are linear isometries. Thus $V^\gamma$ along $\gamma$ may be viewed as an isometric copy of the $\mathbb R^d$-valued Gaussian process $z$. Adapting terminology from the deterministic setting, we say that the vector field $V^\gamma$ is \emph{covariantly Gaussian}, and in this sense, the rolled Gaussian process is the natural curved analogue of the Euclidean Gaussian process.
\begin{proposition}[Gaussian vector field along $\gamma$]
\label{prop:vector_field}
Given $(b,U)$, $\{V^\gamma(t), t \in [0,1]\}$ is a covariant Gaussian vector field along $\gamma$. 
\end{proposition}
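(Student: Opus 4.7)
The plan is to reduce the claim to the standard fact that the image of a Gaussian process under a linear isometry is Gaussian. Set $w = z - m$, a zero-mean $\mathbb{R}^d$-valued Gaussian process with covariance $K$, and define the $T_{\gamma(0)}M$-valued process $W(t) := P^c_{1\leftarrow 0}\, U\, w(t)$. Because $P^c_{1\leftarrow 0}\circ U:\mathbb{R}^d\to T_{\gamma(0)}M$ is a linear isometry, $W$ is a zero-mean Gaussian process on the fixed inner-product space $T_{\gamma(0)}M$, in the sense recorded in Appendix~A. Unwinding the definition of $V^\gamma$ together with the identity $y_b - m_b = Uw$ gives the key reformulation $V^\gamma(t) = P^\gamma_{t\leftarrow 0} W(t)$, so that $V^\gamma$ is just the parallel transport along $\gamma$ of the Gaussian process $W$ rooted at $\gamma(0)$.

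From here I would verify Gaussianity of the vector field. For any finite collection $t_1,\ldots,t_r\in[0,1]$ and any $\alpha_i\in T_{\gamma(t_i)}M$, the isometry of parallel transport yields
$\sum_{i=1}^r \langle \alpha_i, V^\gamma(t_i)\rangle_{\gamma(t_i)} = \sum_{i=1}^r \langle P^\gamma_{0\leftarrow t_i}\alpha_i,\, W(t_i)\rangle_{\gamma(0)},$
which is a linear functional of the jointly Gaussian random vector $(W(t_1),\ldots,W(t_r))$, and hence Gaussian. Thus every finite-dimensional linear functional of $\{V^\gamma(t):t\in[0,1]\}$ is Gaussian, which is the defining property of a Gaussian vector field along $\gamma$. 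Linearity of parallel transport and $\mathbb{E}[W(t)]=0$ immediately give that the vector field has zero mean.

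For the covariance, for $\alpha\in T_{\gamma(s)}M$ and $\beta\in T_{\gamma(t)}M$, the bilinear form $\mathbb{E}\bigl[\langle\alpha,V^\gamma(s)\rangle_{\gamma(s)}\,\langle\beta,V^\gamma(t)\rangle_{\gamma(t)}\bigr]$ reduces, using the same isometry step, to the covariance of $W$ evaluated at $(P^\gamma_{0\leftarrow s}\alpha,\,P^\gamma_{0\leftarrow t}\beta)$, which is $K(s,t)$ expressed in the frame $P^c_{1\leftarrow 0}U$ on $T_{\gamma(0)}M$. Finally, the ``covariant'' adjective is built into the construction: applying $P^\gamma_{0\leftarrow t}$ to $V^\gamma(t)$ returns $W(t)$, so the parallel-transport pullback of $V^\gamma$ to the single tangent space $T_{\gamma(0)}M$ is a genuine Gaussian process there, matching the discussion preceding the proposition. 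No step presents a substantive obstacle; the only care required is bookkeeping the directions of the parallel transports and keeping track of which inner product is in use at each stage.
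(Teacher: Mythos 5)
Your proof is correct, and the core mechanism is the same as the paper's: parallel transport maps and the frame $U$ are linear isometries, so they carry the Gaussian process $z-m$ to a Gaussian object in each tangent space along $\gamma$. The packaging differs in one respect worth noting. The paper works with an explicit parallel orthonormal frame $E^\gamma(t)$ along $\gamma$, writes out the block covariance operator of $(V^\gamma(t_1),\ldots,V^\gamma(t_r))$ on the direct sum $\bigoplus_i T_{\gamma(t_i)}M$, and then invokes projectivity of these finite-dimensional marginals together with Kolmogorov's extension theorem (citing Hutchinson et al.) to obtain the vector field on all of $[0,1]$. You instead observe that $V^\gamma(t)=P^\gamma_{t\leftarrow 0}W(t)$ is already defined pointwise as a deterministic measurable function of the given process $z$, so no extension argument is needed; you then verify Gaussianity via the basis-free linear-functional characterization from Appendix A. Your route is the more economical one --- the extension step in the paper is arguably redundant precisely because the field is constructed explicitly rather than only through its finite-dimensional laws --- while the paper's frame-based computation has the advantage of exhibiting the covariance operators $\Omega_{st}=E^\gamma(s)K_b(s,t)E^\gamma(t)^\top$ in a form that is reused later in the asymptotic arguments. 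Both establish the ``covariant'' property the same way, by noting that pulling $V^\gamma$ back to a single tangent space by parallel transport recovers an isometric copy of $z-m$.
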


The choice of point $b \in M$ and basis $U$ on its tangent space $T_bM$ is arbitrary. The rolled Gaussian process, however, is equivariant with respect to this choice so that in practice the particular choice of $b$ and $U$ is inconsequential.
\begin{proposition}[Equivariance of rolled Gaussian process]
\label{prop:equivariance}
Starting from point $b \in M$ with basis $U$ for  $T_bM$, let $x \sim \mathcal{RGP}(m, K; b, U)$. If one starts instead from $b' \in M$ with basis $U'$ for $T_{b'}M$, then there exists unique $m'$ and $K'$ such that the rolled Gaussian process $ x' \sim \mathcal{RGP}(m', K'; b', U')$ is equal in distribution to $x$.
\end{proposition}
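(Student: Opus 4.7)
The plan is to establish the result by explicit construction: given $(m, K)$ adapted to $(b, U)$, I exhibit $(m', K')$ adapted to $(b', U')$ such that the resulting rolled Gaussian process coincides almost surely with the original one. The key observation is that the construction $(m, K) \mapsto x$ factors through two intrinsic objects, namely the rolled mean curve $\gamma$ on $M$ and the covariant Gaussian vector field $V^\gamma$ along it (Proposition \ref{prop:vector_field}), neither of which depends on the auxiliary choice of $(b, U)$.

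First I would set $\gamma := (Um)^\uparrow$ and let $c, c'$ be the minimal geodesics from $b, b'$ to $\gamma(0)$, which we assume to be unique (\emph{i.e.} $\gamma(0) \notin \mathcal C(b) \cup \mathcal C(b')$). Define $m'_{b'} := \gamma^{\downarrow}_{b'}$ as the unrolling of $\gamma$ into $T_{b'}M$ via \eqref{eq:translated:unrolling}, and take $m' := (U')^{-1} m'_{b'}$; invertibility of unrolling and rolling then guarantees $(U'm')^\uparrow = \gamma$, so the rolled mean curve is preserved. Next, set
\begin{equation*}
B := (U')^{-1} P^{c'}_{0 \leftarrow 1} P^c_{1 \leftarrow 0} U : \mathbb R^d \to \mathbb R^d,
\end{equation*}
which is an orthogonal map as a composition of linear isometries, and define $K'(s, t) := B K(s, t) B^\top$; this is a valid matrix-valued covariance kernel.

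To verify $x' \overset{d}{=} x$, I introduce $z' := m' + B(z - m) \sim \mathcal{GP}(m', K')$ on a common probability space with $z$, and then unwind the definitions in Definition \ref{defs} to obtain
\begin{equation*}
y'_{b'}(t) - m'_{b'}(t) = U'\{z'(t) - m'(t)\} = P^{c'}_{0 \leftarrow 1} P^c_{1 \leftarrow 0} \{y_b(t) - m_b(t)\}.
\end{equation*}
Applying $P^\gamma_{t \leftarrow 0} P^{c'}_{1 \leftarrow 0}$ to both sides and using $P^{c'}_{1 \leftarrow 0} P^{c'}_{0 \leftarrow 1} = \text{id}$ together with $\gamma' = \gamma$, one sees that the vector field $V^{\gamma'}$ built from $(b', U', m', K')$ is almost surely equal to $V^\gamma$. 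Wrapping these vector fields onto $M$ via \eqref{eqn:translated:wrapping} then gives $x' = x$ almost surely, and in particular equality in distribution.

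For uniqueness, if $(m'', K'')$ adapted to $(b', U')$ also induces the same law as $x$, then its rolled mean $\gamma'' = (U'm'')^\uparrow$ must equal $\gamma$ (by assumption A1 together with Theorem \ref{thm:frechet:mean} when its hypotheses apply, $\gamma$ is identified with the pointwise Fr\'echet mean of the law of $x$), which by invertibility of rolling forces $m'' = m'$. Unwrapping $x$ with respect to $\gamma$ into $T_{b'}M$ then uniquely recovers the zero-mean Gaussian vector field and hence its covariance, so $K'' = K'$. The main obstacle is the careful bookkeeping of parallel transports along $c$, $c'$, and $\gamma$, and checking that the composite $B$ is genuinely an orthogonal map independent of the realization of $z$; this is where the hypothesis $\gamma(0) \notin \mathcal C(b) \cup \mathcal C(b')$ is used, to guarantee that $c$ and $c'$, and hence the transports $P^c$ and $P^{c'}$, are well-defined.
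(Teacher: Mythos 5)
Your construction is essentially the paper's proof: you transport the mean and covariance through $\gamma(0)$ along the concatenation of the geodesics $c$ and $c'$ (the paper's curve $\eta$), your orthogonal map $B=(U')^{\top}P^{c'}_{0\leftarrow 1}P^{c}_{1\leftarrow 0}U$ is exactly the paper's change of frame, and you correctly handle the affine shift of the initial point via $m'_{b'}=\gamma^{\downarrow}_{b'}$ (which the paper's displayed formula for $m'$ glosses over but which reappears as the term $a$ in Proposition \ref{prop:equivariance:in:terms:of:basis}). The only divergence is your uniqueness step, which detours through Theorem \ref{thm:frechet:mean} and hence implicitly assumes its hypotheses; the paper instead argues directly from injectivity of the map $(U,m,K)\mapsto V^{\gamma}$, which is the cleaner route and does not restrict the class of manifolds.
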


\subsection{Parametric models}
\label{sec:parametric:model:for:curves:on:Rd}
A core ingredient of the rolled Gaussian process model  is a Gaussian process model, $z \sim \mathcal{GP}(m,K)$, for curves in $\mathbb R^d$ to be rolled and wrapped onto $M$. This model requires specification of the mean, $m$, and covariance, $K$, functions. For many applications it is beneficial to consider $z$ in a basis representation, which may be defined as follows. Let $\{\phi_s: [0,1] \rightarrow \mathbb R\}$ be a linearly independent basis for real functions on $[0,1]$, which includes the constant function. Let $z(t)=\sum_{s=1}^\infty \xi_s\phi_s(t)$, where $\xi=(\xi_1,\xi_2,\ldots)$ is a random Gaussian element in a Hilbert space $H$. Its mean $\mathbb Ez(t)=m(t)=K_0\Phi(t)$, where $\Phi(t)=(\phi_1(t),\phi_2(t),\ldots) \in \ell^2$, and $K_0: \ell^2 \to \mathbb R^d$ is a linear map; its covariance $\Xi:H \times H$ given by the tensor product $\Xi=V \otimes U$, where $U \in \text{Sym}_{>0}(d)$ and $V: \ell^2 \to \ell^2$ is a bounded, self-adjoint operator on the space $\ell^2$ of square-summable real sequences. Thus the associated covariance function $K(s,t)=\langle\Phi(s),V\Phi(t) \rangle_H$U. 

The special case with a finite number, $k$, of basis functions is particularly useful when data curves are observed at discrete times, as in the robot application in \S\ref{sec:robot:SO3}. 
For example, let $\{\phi_s: [0,1] \rightarrow \mathbb R\}$ be a B-spline basis \citep{deBoor2002practical, ramsay2009fda} of order 4 and knots at the $k-2$ interior points of a grid of $k$ equally spaced points on $[0,1]$ with $k\geq \operatorname{max}(4,d)$; then each $\phi_s$ is a piecewise cubic polynomial with joins at the knots such that any adjacent polynomials' values, and values of first and second derivatives, are matched \citep{deBoor2002practical}. By construction, $\sum_{s=1}^k \phi_s(t) = 1$, for any $t \in [0,1]$; that is, the constant function is in the span of this basis, a property useful later in Proposition \ref{prop:equivariance:in:terms:of:basis}. The Gaussian process $z$ can be written as
\begin{equation}
z(t) = \sum_{s=1}^k {w_s} \phi_s(t),
\label{eqn:y:basis:expansion}
\end{equation}
where
$W = (w_1, \ldots, w_k) \sim \mathcal {MN}(M_w, U_w, V_w)$, the matrix normal distribution with mean matrix, $M_w \in \mathbb R^{d \times k}$, row covariance, $U_w \in \SPD(d) $, and column covariance $V_w \in \SPD(k)$. Then, its mean $m(t) = M_w \phi(t)$, and separable covariance $K(t, t') = \phi(t)^\top V_w \phi(t') U_w$, where $\phi(t) = \{\phi_1(t), \ldots, \phi_k(t) \} \in \mathbb R^k$ and $\SPD(p)$ is the cone of $p \times p$ symmetric positive definite matrices.

\section{Estimation for fully observed curves}
\label{sec:asymp}
In this section, we develop estimation theory for the mean and covariance $m_b$ and $K_b$ of the 
$\mathbb R^d$-valued Gaussian process $y_b=Uz$, when the rolling $m^\uparrow$ of $m_b$ onto $M$ equals the Fr\'echet mean $\fre$ of the rolled Gaussian process  $x \sim \mathcal{RGP}(m,K;b, U)$. Theorem \ref{thm:frechet:mean} identifies at least two types of $M$ which are of interest in statistics for which $m_b^\uparrow=\fre$. Estimators of $m$ and $K$ are then obtained by transforming the estimators of $m_b$ and $K_b$ under $U^\top: T_bM \to \mathbb R^d$. 

We first consider the case when $x_1,\ldots,x_n$ are independent realizations of the process $x \sim \mathcal{RGP}(m,K;b, U)$, where each $x_i$ is densely observed on $[0,1]$ so that a smoothed or interpolated estimate of each may be considered as continuously observed. The unrolling and unwrapping procedures transfer entire curves onto a single tangent space, which upon choosing a frame is identified with $\mathbb R^d$. 
    
If $\mathbb P$ is the law of $x$, the marginal laws of $x(t)$ for each $t$ are each absolutely continuous with respect to the volume measure on $M$ since they are each a pushforward measure, under the surjective exponential map, of a non-degenerate Gaussian  defined on a tangent space. Therefore, $\mathbb P(\mathcal C(x([0,1]))=0$ since the cut locus of any point is of dimension $d-1$. Moreover, since $p \in \mathcal C(q)$ if and only if $q \in \mathcal C(p)$, the random variable $\exp^{-1}_p(x(t))$ is well-defined almost surely, for every $t$ and any $p \in M$. Recall the population and sample Fr\'echet mean curves in  \ref{eq:Fre_population} and \ref{eq:Fre_mean_curve}.
%
We assume that
\begin{description}
    \item[A1.] $\mathbb E[\rho^2(p,x(t))]<\infty$ for all $p \in M$ and $t \in [0,1]$, and the Fr\'echet mean $\gamma_{\text{Fre}}(t)$ exists and is unique for every $t \in [0,1]$. Further, assume that there are at least two minimal geodesics from $\gamma_{\text{Fre}}(t)$ to any point in its cut locus. 
\end{description}
To assume uniqueness of the sample and population Fr\'echet means is standard \citep[e.g.][]{dai2018principal, lin2019intrinsic}, without which it is difficult to probe the asymptotic properties of estimators of the mean $m$ and covariance $K$ of the Gaussian process $z$; see, for example, \cite{afsari2011riemannian} for conditions that ensure this pointwise for each $t$. The assumption of existence of two minimal geodesics implies that $\mathbb P(\mathcal C(\fre([0,1]))=0)$ \citep{le2014measure}, and enables us to avoid assuming the more restrictive condition that $x(t)$ lies outside the cut locus of $\gamma_{\text{Fre}}(t)$. A recent result suggests that the assumption on the existence of at least two minimal geodesics may be dispensed altogether \citep{lytchak2025zero}.




Completeness of $M$ guarantees that $\hatfre(t)$ is well-defined and almost surely unique \citep[Corollary 2.4][]{arnaudon2014means}, and necessarily a zero of the derivative of $F_n$. We prove in Theorem \ref{thm:C1_convergence} that under some assumptions that follow, $\hatfre$ with high probability lies within a tubular neighbourhood around $\fre$ defined later in \eqref{eq:tubular}. 

Context for the remaining assumptions is provided by the choice of our estimators. 
Under the assumption that the rolling $m_b^\uparrow$ of mean $m_b$ of the  Gaussian process $z$ coincides with $\fre$, a natural estimator of $m_b$ is 
\begin{equation}
\label{eq:rolled_mhat}
\hat m_b:=(\hatfre)^{\downarrow}_b,
\end{equation}
the unrolling of the sample Fr\'echet mean curve $\hatfre$ onto $T_bM$. The estimator $\hat m_b$ equals the estimator
\begin{equation}
\label{eq:m_2}
\frac{1}{n}\sum_{i=1}^n x^{\downarrow \hat \gamma_{\text{Fre}}}_{b,i}(t),
\end{equation}
where
\begin{equation}
x^{\downarrow \hat \gamma_{\text{Fre}}}_{b,i}(t):=\hat \gamma_{\text{Fre}}^\downarrow(t)+P^{\hat c}_{0 \leftarrow 1}P^{\hat \gamma_{\text{Fre}}}_{0 \leftarrow t} \exp^{-1}_{\hat \gamma_{\text{Fre}}(t)} \left\{ x_i(t) \right\}, \quad t \in [0,1],
\label{eq:xib}
\end{equation}
is the unwrapping of $x_i$ onto $T_bM$ with respect to the sample Fr\'echet mean curve $\hat \gamma_\text{Fre}$,
where $\hat c$ estimates the geodesic $c$ from $\gamma_{\text{Fre}}(0)$ to $b$ in $M$. This follows from the fact that parallel transports are isometries and preserve the origin, and under assumption A1 $\hatfre(t)$ is characterised by the relation
\[
\frac{1}{n}\sum_{i=1}^n \exp^{-1}_{\hat \gamma_{\text{Fre}}(t)} \{x_i(t)\}=0.
\]
For the estimator of covariance $K_b$ we consider
\begin{equation}
\label{eq:cov_est}
\hat K_b(s,t):=\frac{1}{n}\sum_{i=1}^n (x^{\downarrow \hat \gamma_{\text{Fre}}}_{b,i}(s)-\hat m_b(s))(x^{\downarrow \hat \gamma_{\text{Fre}}}_{b,i}(t)-\hat m_b(t))^\top.
\end{equation}
Computation of the estimators rely on first computing the sample Fr\'echet mean curve $\hat \gamma_{\text{Fre}}$, from which the $x^{\downarrow \hat \gamma_{\text{Fre}}}_{b,i}$ are computed. The convergence, and the subsequent rates, 
of $\hat m_b$ and $\hat K_b$ thus depend on three sources of error: (i) using $\hat \gamma_{\text{Fre}}$ in place of $\gamma_{\text{Fre}}$; (ii) using the vector field $\exp^{-1}_{\hat \gamma_{\text{Fre}}(t)} \left\{ x(t) \right\}$ in place of $\exp^{-1}_{\gamma_{\text{Fre}}(t)} \left\{ x(t) \right\}$; (iii) using parallel transport maps along $\hat \gamma_{\text{Fre}}$ and $\hat c$ instead of $\gamma_{\text{Fre}}$ and $c$. 

For the error in (i) the uniform rate 
 \begin{equation}
\label{eq:uniform_convergence}
\sup_{t \in [0,1]}\rho(\hat \gamma_{\text{Fre}}(t),\gamma_{\text{Fre}}(t))=O_P(n^{-1/2})
 \end{equation}
has been obtained under certain \emph{global} conditions on $M$ and the law of $x$. For example, \cite[Assumption B3]{dai2018principal} assume that the sample paths of $x$ almost surely cluster around $\gamma_{\text{Fre}}$, while \cite[Assumption B3]{lin2019intrinsic} impose a strong global moment condition on $x$ for \emph{all} compact subsets of $M$. However, in our setting exponential decay of the tail probabilities of the Gaussian process $z$ can be exploited to control the fluctuations of the sample paths of $x$ within a neighbourhood of $\fre$. Assume that
\begin{description}
    \item[A2.] $M$ has bounded geometry, i.e. $|\sec{(M)}| \leq \kappa$ for $\kappa \geq 0$. 
\end{description}
Consider the closed tubular neighbourhood of the Fr\'echet mean curve $\fre$,
\begin{equation}
\label{eq:tubular}
\mathbb T_{\gamma_{\text{Fre}}}(r_0):=\{\exp_{\gamma_{\text{fre}}(t)}(v): t \in [0,1], \|v\|_{\gamma_{\text{fre}}(t)} \leq r_0\},    
\end{equation}
where $r_0<\min\left\{\frac{\pi}{2\sqrt{\kappa}},\frac{1}{2} \text{inj}(\gamma_{\text{Fre}}([0,1]))\right\}$. Lemma \ref{lem:tubular} in Appendix shows that $\mathbb T_{\gamma_{\text{Fre}}}(r_0)$ is well-defined. Under assumption A2, when assumption A4 stated below also holds, all sample paths of $x$ lie in $T_{\gamma_{\text{Fre}}}(r_0)$ with high probability, which suffices to prove \eqref{eq:uniform_convergence}. 

However, we make a stronger set of consolidated assumptions on the paths of $x$, since the study of errors (ii) and (iii) require convergence of vector fields along $\hat \gamma_{\text{Fre}}$, for which we require that $\hat \gamma_{\text{Fre}}$ converges in probability to $\gamma_{\text{Fre}}$ in the weak $C^1$ topology (see Appendix \ref{app:c_k curves} for a precise definition). $C^1$ convergence of $\hat \gamma_{\text{Fre}}$ has not been considered before in the literature, and is one of our key theoretical contributions. 

\begin{description}
\itemsep 0em
    \item [A3.] The sample paths of the Gaussian process $z$ are in $C^2([0,1], \mathbb R^d)$ almost surely.
    \item[A4.] The mean $m$ and covariance $K$ of the $\mathbb R^d$-valued Gaussian process $z$ satisfy 
  \[
    \zeta:=\sup_{t \in [0,1]}\|m(t)\| <r_0/2, \quad \sigma^2:=\sup_{s,t \in [0,1]^2} \|K(s,t)\|_{\text{op}}<\infty. 
  \]
    \item [A5.]  $\inf_{t \in [0,1]} \lambda_{\min}(\mathbb EH_t)>0$, 
    where $p \mapsto H_t(p)$ is the Hessian operator corresponding to the Hessian of $p \mapsto \rho^2(p,x(t))$, and  $\lambda_{\min}$ is its smallest eigenvalue. 
    
\end{description}
Assumption A2 is satisfied by many $M$ that are of interest in statistics; these include compact manifolds such as $\mathbb S^d, O(d), SO(d)$; the Stiefel and Grassmannian manifolds with standard choices of metrics; non-compact manifolds such as $\mathbb H^d$, and the manifold $\text{Sym}_+(d)$ of $d \times d$ symmetric positive definite matrices with several Riemannian metrics (e.g. log-Euclidean; affine-invariant; power-Euclidean; Cholesky; Bures-Wasserstein). It is needed to control behaviour of the Hessian of $p \mapsto \rho^2(p,q)$ whenever $q$ is close to the cut locus of $p$, on which the Hessian is unbounded, and plays an important role when deriving bounds on empirical processes indexed by the class of functions $\{\nabla^2_p \rho^2(p,\cdot)\}$ used in the proof of Theorem \ref{thm:C1_convergence}.

Assumption A3 is not very restrictive. For example, it is satisfied by a $z$ with separable covariance $K(s,t)=Pk(s,t)$ for a scalar Matern kernel $k:[0,1]^2 \to \mathbb R$ with smoothness parameter $5/2$ and a positive definite matrix $P$. Assumption A2 implies that the sample paths of the rolled Gaussian process $x$ are in $C^2([0,1], M)$ almost surely; see Appendix \ref{app:c_k curves} for a precise definition of $C^2$ curves on $M$ and the $C^2$ topology. To see this, with $\gamma=m^\uparrow$, note that the curve $(\gamma(t),V^\gamma(t))$ is in $C^2([0,1],TM)$, where $V^\gamma$ is the Gaussian vector field along $\gamma$ in Proposition \ref{prop:vector_field}. The claim follows since the exponential map $\exp:TM \to M$ is smooth on a complete manifold $M$. Mote that $C^2$ regularity of the paths of $x$ is needed to ensure that $\hat \gamma_{\text{Fre}}$ and $\gamma_{\text{Fre}}$ are in $C^2([0,1],M)$ (Theorem \ref{thm:C1_convergence}), and that the unrolling or unwrapping of a curve with respect to $\hat \gamma_{\text{Fre}}$ does not deviate too much from the corresponding actions with respect to $\gamma_{\text{Fre}}$ (Theorem \ref{thm:rates}). 

Assumption A4 enables exponential decay of probability of sample paths outside of the tubular neighbourhood $\mathbb T_{\gamma_{\text{Fre}}}(r_0)$ in \eqref{eq:tubular}, and is weaker than requiring all sample paths of $x$ to lie within a subset of $M$. We thus prove in Lemma \ref{lem:small_prob} that the distance between a sample path of $x$ and sample Fr\'echet mean $\hatfre$ is bounded in probability. The condition $\zeta < r_0/2$ is mildly restrictive: for $M$ that is a symmetric space or a manifold with non-positive curvature Theorem \ref{thm:frechet:mean} shows that the rolling of $m_b$ on to $M$ coincides with $\fre$, and the condition is trivially satisfied. For general $M$, the condition requires the rolling of $m_b$ to not deviate too far from $\fre$. 


Assumption A5 is needed in the proof of Theorem \ref{thm:C1_convergence} for similar reasons as with assumption A2 when controlling the Hessian of the squared-distance function. It first appeared in \cite{dai2018principal} and later in \cite{lin2019intrinsic, shao2022intrinsic}. It is a strong condition, satisfied, for example, on manifolds with no conjugate points (e.g. non-positively curved $M$ such as  $\mathbb H^d$ and $\text{Sym}_+(d)$). It generally fails to hold on positively curved manifolds $M$ except under restriction on the support of $x$. Given assumption A1, we conjecture that Theorem \ref{thm:C1_convergence} holds even without assumption A5, but it is unclear to us how to prove it. 

Under assumptions A1-A5, Theorem \ref{thm:C1_convergence} below clarifies the smoothness of $\fre$ and $\hatfre$, and establishes $C^1$ convergence of $\hatfre$ to $\fre$ at the optimal Euclidean rate. As far as we are aware, this result in novel and has not appeared before in the literature. 
\begin{theorem}
\label{thm:C1_convergence}
Under assumptions A1-A5, 
\begin{enumerate}
\itemsep 0em
    \item [(a)] $\gamma_{\text{Fre}}$ belongs to $C^2([0,1], M)$;
    \item[(b)] The sample Fr\'echet mean $\hatfre$ lies in $\mathbb \mathbb \mathbb T_{\gamma_{\text{Fre}}}(r_0)$ and belongs to $C^2([0,1], M)$ a.s.;
    \item[(c)] $\hat \gamma_{\text{Fre}}$ converges to $\gamma_{\text{Fre}}$ as $n \to \infty$ in the $C^1$ topology at the rate $O_P(n^{-1/2})$. 
\end{enumerate}
\end{theorem}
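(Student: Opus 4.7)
I would establish smoothness by the manifold implicit function theorem applied to the first-order condition $\mathbb{E}[\exp_p^{-1}x(t)]=0$ at $p=\gamma_{\text{Fre}}(t)$. Under A5 the population Hessian operator $\mathbb{E}H_t$ at $\gamma_{\text{Fre}}(t)$ is invertible; A3 plus smoothness of the exponential map yields $x\in C^2([0,1],M)$ almost surely, as the excerpt itself observes; and A4 gives Gaussian integrability so that $t$-derivatives commute with $\mathbb{E}$. Hence $(p,t)\mapsto \mathbb{E}[\exp_p^{-1}x(t)]$ is $C^2$ in a neighbourhood of the graph of $\gamma_{\text{Fre}}$, and the IFT in normal coordinates based at $\gamma_{\text{Fre}}(t_0)$ produces $\gamma_{\text{Fre}}\in C^2([0,1],M)$. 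The same argument applied to the empirical functional gives $\hat\gamma_{\text{Fre}}\in C^2$ provided it exists and lies outside the cut loci of the $x_i$. The tubular-neighbourhood containment is a concentration statement: A4 and exponential tails of the Gaussian $y_b$ force $\sup_t \|y_b(t)-m_b(t)\|_b\le r_0/2$ with probability $1-o(1)$, so after wrapping all $x_i(t)$ lie in $\mathbb{T}_{\gamma_{\text{Fre}}}(r_0/2)$; a standard comparison of $F_n(\hat\gamma_{\text{Fre}}(t),t)$ with $F_n(\gamma_{\text{Fre}}(t),t)$ using the strong convexity from A5 then forces $\hat\gamma_{\text{Fre}}\in \mathbb{T}_{\gamma_{\text{Fre}}}(r_0)$ on the same event.

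\textbf{$C^0$ rate in (c).} A Taylor expansion of the empirical gradient field along the short geodesic from $\gamma_{\text{Fre}}(t)$ to $\hat\gamma_{\text{Fre}}(t)$, combined with the two first-order conditions and A5, yields
\begin{equation*}
\rho(\hat\gamma_{\text{Fre}}(t),\gamma_{\text{Fre}}(t))\le C\left\|\frac{1}{n}\sum_{i=1}^n \exp_{\gamma_{\text{Fre}}(t)}^{-1}x_i(t)-\mathbb{E}\exp_{\gamma_{\text{Fre}}(t)}^{-1}x(t)\right\|_{\gamma_{\text{Fre}}(t)}+O_P(\rho(\hat\gamma_{\text{Fre}}(t),\gamma_{\text{Fre}}(t))^2).
\end{equation*}
The leading stochastic term is $O_P(n^{-1/2})$ uniformly in $t$ by a bracketing/Donsker argument on $\{(p,t)\mapsto\exp_p^{-1}x(t):p\in\mathbb{T}_{\gamma_{\text{Fre}}}(r_0),\,t\in[0,1]\}$, whose envelope and entropy are controlled by A2 (bounded curvature bounds the derivatives of $\exp^{-1}$ away from the cut locus, which is avoided inside $\mathbb{T}_{\gamma_{\text{Fre}}}(r_0)$ by the definition of $r_0$) and A4 (Gaussian tails on paths of $x$). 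The quadratic remainder is absorbed by one iteration.

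\textbf{$C^1$ rate and the main obstacle.} Covariantly differentiating the two first-order identities in $t$ and inverting via A5 expresses $\dot\gamma_{\text{Fre}}(t)$ and $\dot{\hat\gamma}_{\text{Fre}}(t)$ in terms of the expected (resp.\ empirical) Hessian and the $t$-derivative of the gradient field. I would then parallel transport $\dot\gamma_{\text{Fre}}(t)$ to $T_{\hat\gamma_{\text{Fre}}(t)}M$ along the minimal geodesic $c_t$ between the two base points (well-defined on the event of (b)) and decompose the resulting difference into (i) a Hessian fluctuation $\|\mathbb{E}H_t-\bar H_t^{(n)}\|_{\text{op}}$; (ii) an empirical-process term for $\partial_t\exp_p^{-1}x(t)$; and (iii) a base-point replacement error arising from evaluating the population gradient and Hessian at $\hat\gamma_{\text{Fre}}(t)$ rather than $\gamma_{\text{Fre}}(t)$. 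Terms (i) and (ii) are $O_P(n^{-1/2})$ uniformly in $t$ by the same bracketing argument applied to the enlarged class of first derivatives, using A3 for differentiability and A2, A4 for envelope control. The \emph{main obstacle} is (iii), because comparing two covariant derivatives at different foot-points involves parallel transport around a small loop formed by segments of $\gamma_{\text{Fre}}$, $\hat\gamma_{\text{Fre}}$, and the connecting geodesics $c_t$, $c_{t+h}$; a naive bound only gives a slower rate. This is exactly what Lemma \ref{lemma:holonomy} is designed to address: its curvature-dependent holonomy estimate, valid under A2, shows the loop contribution is of order $\rho(\hat\gamma_{\text{Fre}}(t),\gamma_{\text{Fre}}(t))^2=O_P(n^{-1})$ and therefore negligible relative to the target. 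Combining (i)--(iii) with the $C^0$ rate then yields $\sup_t\|P^{c_t}_{\hat\gamma_{\text{Fre}}(t)\leftarrow\gamma_{\text{Fre}}(t)}\dot\gamma_{\text{Fre}}(t)-\dot{\hat\gamma}_{\text{Fre}}(t)\|_{\hat\gamma_{\text{Fre}}(t)}=O_P(n^{-1/2})$, which is the asserted $C^1$ convergence rate.
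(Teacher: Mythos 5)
Your argument for (a), (b), and the $C^0$ part of (c) is essentially the paper's: the implicit function theorem applied to $\Phi(p,t)=\mathbb{E}[\exp_p^{-1}x(t)]$ with invertibility of the Hessian from A5, exchange of derivative and expectation justified by Gaussian integrability (Lemma \ref{lemma:FrechetFunction_deriv}), a Glivenko--Cantelli/Donsker argument for the classes $\{\exp_p^{-1}x(t)\}$ and $\{\nabla_p\exp_p^{-1}x(t)\}$ (Lemma \ref{lem:uniform_conc}), and a Taylor expansion plus fixed-point inequality for the uniform $O_P(n^{-1/2})$ rate. The $C^1$ step also follows the paper's route — covariantly differentiate both stationarity identities in $t$, subtract, invert $H_t$ — and your three-term decomposition matches the paper's four terms. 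The one place you diverge is in handling the base-point replacement error (your term (iii)): you assert that a naive bound is too slow and that Lemma \ref{lemma:holonomy} is needed to show the loop contribution is $O_P(n^{-1})$. That is a misdiagnosis. After identifying $T_{\hatfre(t)}M$ with $T_{\fre(t)}M$ by parallel transport along the connecting geodesic (or working in normal coordinates), the terms $[\nabla_p\Phi(\hatfre(t),t)-H_t](\nabla_t\hatfre(t))$ and $\nabla_t\Phi(\hatfre(t),t)-\nabla_t\Phi(\fre(t),t)$ are controlled directly by the $C^2$-smoothness of $\Phi$ on the compact tube (Lemma \ref{lemma:Lipschitz}) times $\sup_t\rho(\hatfre(t),\fre(t))=O_P(n^{-1/2})$, which already meets the target rate since $\nabla_t\hatfre(t)=O_P(1)$; no holonomy estimate is required, and the paper does not use Lemma \ref{lemma:holonomy} here. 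That lemma enters only in the proof of Theorem \ref{thm:rates}, where genuine parallel-transport discrepancies around geodesic triangles (between $\hatfre$, $\fre$, and $b$) must be bounded. Your invocation of it is harmless — the holonomy correction is indeed second order and negligible — but the claim that the first-order bound fails without it is incorrect, and your proof would read more cleanly by bounding (iii) with the Lipschitz estimate alone. Your alternative argument for the tubular containment in (b) (strong convexity comparison of $F_n$ values) is also fine, though the paper instead deduces containment from uniform convergence of $\Phi_n$ and $\nabla_p\Phi_n$ together with the implicit function theorem.
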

Errors of estimators of $m_b$ and $K_b$ are largely governed by the error $\|x^{\downarrow\hat \gamma_{\text{Fre}}}_{b,i}-x^{\downarrow
\gamma_{\text{Fre}}}_{b,i}\|_b$ that is induced by using $\hatfre$ in place of $\fre$ for the unwrapping. The error may be compared with the situation in the Euclidean case, where $\|(x_i-\bar x)-(x_i-\mu)\|=O_P(n^{-1/2})$ with $\bar x$ and $\mu$ as the pointwise sample and population means, respectively;  a similar rate is true for the deviation of the sample covariance from its population counterpart. Theorem \ref{thm:rates} below shows that this remains true even for $C^2$ curves on a manifold $M$: the rates of convergence of $\hat m_b$ and $\hat K_b$ are determined by the rate of convergence of $\hat \gamma_{\text{Fre}}$ to $\gamma_{\text{Fre}}$ in Theorem \ref{thm:C1_convergence}, and match the optimal rates in the Euclidean setting. Recall that $\|\cdot\|$ is the norm and $\|\cdot\|_F$ is the corresponding Frobenius norm on $\mathbb R^d$. 
\begin{theorem}
 \label{thm:rates}
Assume A1-A5. For a frame $U:\mathbb R^d \to T_bM$, the following hold:
 \begin{enumerate}
\item [(a)] $\underset {t \in [0,1]} \sup \|U^\top \hat m_b(t)-U^\top m_b(t)\|=
\underset {t \in [0,1]} \sup\|\hat m_b(t)-m_b(t)\|=
O_P(n^{-1/2})$;
     \item[(b)] $\underset {(s,t) \in [0,1]^2} \sup \|U^\top (\hat K_b(s,t)- K_b(s,t)) U\|_F=\underset {(s,t) \in [0,1]^2} \sup\|\hat K_b(s,t)- K_b(s,t)\|_F=O_P(n^{-1/2})$.
 \end{enumerate}
\end{theorem}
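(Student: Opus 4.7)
The plan is to derive both parts of the theorem as continuity statements for the unrolling and unwrapping maps under $C^1$ perturbations of the base curve, combined with the $C^1$ rate $\hatfre \to \fre$ supplied by Theorem~\ref{thm:C1_convergence}. The first equality in each of (a) and (b) is immediate: since $U:\mathbb R^d \to T_bM$ is a linear isometry, the Euclidean norm on $\mathbb R^d$ and the Riemannian norm on $T_bM$ coincide under the identification, and likewise the Frobenius norm is preserved under the induced conjugation on operators. It therefore suffices to bound the right-hand norms at rate $O_P(n^{-1/2})$.

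For part (a), the ODE~\eqref{eq:translated:unrolling} defining the unrolling yields
\[
\hat m_b(t) - m_b(t) = \bigl[\exp_b^{-1}(\hatfre(0)) - \exp_b^{-1}(\fre(0))\bigr] + \int_0^t \bigl[P^{\hat c}_{0 \leftarrow 1} P^{\hatfre}_{0 \leftarrow s}\dot{\hatfre}(s) - P^{c}_{0 \leftarrow 1} P^{\fre}_{0 \leftarrow s}\dot{\fre}(s)\bigr]\diff s.
\]
The boundary term is $O_P(n^{-1/2})$ by smoothness of $\exp_b^{-1}$ in a neighborhood of $\fre(0)$, which lies outside the cut locus of $b$ by the standing assumption in Definition~\ref{defs}. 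For the integrand, I would establish a Lipschitz estimate of the form $\|P^{\gamma_1}_{0 \leftarrow s} - P^{\gamma_2}_{0 \leftarrow s}\|_{\text{op}} \leq C\,\|\gamma_1 - \gamma_2\|_{C^1}$ for $C^1$ curves confined to the compact tubular neighborhood $\mathbb T_{\gamma_\text{Fre}}(r_0)$, into which $\hatfre$ falls with high probability by Theorem~\ref{thm:C1_convergence}(b). Such an estimate follows from a Gr\"onwall argument on the parallel transport ODE in local coordinates, with Christoffel symbols uniformly bounded on $\mathbb T_{\gamma_\text{Fre}}(r_0)$ via Assumption~A2. Combined with the $C^1$ rate $\|\dot{\hatfre}-\dot\fre\|_\infty = O_P(n^{-1/2})$ from Theorem~\ref{thm:C1_convergence}(c), and the analogous $C^1$ rate for $\hat c$ (via smoothness of the shortest-geodesic map from $b$), this bounds the integrand by $O_P(n^{-1/2})$ uniformly in $s$ and hence in $t$.

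For part (b), let $\phi_i := x_{b,i}^{\downarrow \fre} - m_b$ and $\tilde\phi_i := x_{b,i}^{\downarrow \hatfre} - \hat m_b$. Since unwrapping with respect to $\fre = m_b^\uparrow$ inverts the construction of the rolled Gaussian process (Remark~\ref{remark:inverse:relation:of:unwrapping:wrapping}), the $\phi_i$ are i.i.d. centered Gaussian processes on $T_bM$ with covariance $K_b$. I would then decompose
\[
\hat K_b(s,t) - K_b(s,t) = \underbrace{\bigl[\tfrac{1}{n}\textstyle\sum_{i=1}^n \phi_i(s)\phi_i(t)^\top - K_b(s,t)\bigr]}_{(\mathrm{I})} \;+\; \underbrace{\tfrac{1}{n}\textstyle\sum_{i=1}^n\bigl[\tilde\phi_i(s)\tilde\phi_i(t)^\top - \phi_i(s)\phi_i(t)^\top\bigr]}_{(\mathrm{II})}.
\]
Term (I) is the standard sample covariance fluctuation of i.i.d.\ Gaussian processes with $C^2$ paths (Assumption~A3), which is $O_P(n^{-1/2})$ uniformly in $(s,t)$ by standard empirical-process arguments (e.g.\ Dudley's entropy bound or Gaussian chaos). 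For (II), I would expand $ab^\top - cd^\top = (a-c)b^\top + c(b-d)^\top + (a-c)(b-d)^\top$ and use a Lipschitz estimate for the unwrapping map~\eqref{eqn:translated:unwrapping}, analogous to the one in part (a), giving $\sup_t\|\tilde\phi_i(t)-\phi_i(t)\| \leq C(1+\sup_t\|\phi_i(t)\|)\,\|\hatfre-\fre\|_{C^1}$. Combined with the Gaussian sup-norm moment bound $n^{-1}\sum_i\|\phi_i\|_\infty^2 = O_P(1)$, term (II) is also $O_P(n^{-1/2})$ uniformly in $(s,t)$.

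The principal obstacle is the uniform Lipschitz continuity of parallel transport, and of the inverse exponential with varying base point, as functionals of the base curve in the $C^1$ topology, with constants uniform across the random sample. This requires (i) Assumption~A2 to obtain Gr\"onwall control of parallel transport with uniformly bounded curvature on the relevant compact region, and (ii) ensuring that $x_i(t)$ lies outside the cut loci of both $\fre(t)$ and $\hatfre(t)$ with high probability, which follows from A4 (Gaussian tail confinement of $x_i$ near $\fre$), Theorem~\ref{thm:C1_convergence}(b) (tubular confinement of $\hatfre$), and the definition of $r_0$ via the injectivity radius; then $\exp^{-1}$ is smooth on the relevant compact, cut-locus-free set and has uniformly bounded Jacobian.
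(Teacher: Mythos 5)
Your proposal is correct in outline and arrives at the same essential reduction as the paper: everything hinges on showing that replacing $\fre$ by $\hatfre$ changes the flattened curves by $O_P(n^{-1/2})$ uniformly in $t$, after which the Euclidean sample mean and sample covariance fluctuations contribute the same rate. The first-equality observation (isometry of $U$) and the high-probability confinement to a cut-locus-free compact set are exactly as in the paper. Where you genuinely diverge is in the technical device for comparing parallel transports along $\hatfre$ and $\fre$ (and along $\hat c$ and $c$): you propose a Gr\"onwall/ODE-stability estimate in local coordinates giving $C^1$-Lipschitz dependence of parallel transport on the base curve, whereas the paper forms geodesic triangles (with vertices such as $\fre(0)$, $\hatfre(0)$, $\hatfre(t)$) and invokes its holonomy bound, Lemma \ref{lemma:holonomy}, which controls the discrepancy by $C\kappa$ times a product of side lengths, one of which is $\rho(\hatfre(0),\fre(0))=O_P(n^{-1/2})$. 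The holonomy route is chart-free, produces explicit curvature-dependent constants directly from A2, and for the transport comparison needs only the $C^0$ rate (the $C^1$ rate and Lemma \ref{lemma:Lipschitz} are reserved for the varying-base-point $\exp^{-1}$ term); your Gr\"onwall route needs the full $C^1$ rate from Theorem \ref{thm:C1_convergence}(c) and uniform bounds on Christoffel symbols over a chart cover of the tubular neighbourhood, but is more elementary and requires no new geometric lemma. A second structural difference: the paper treats (a) and (b) with a single lemma by identifying $\hat m_b=(\hatfre)^\downarrow_b$ with the average of the unwrapped curves via \eqref{eq:m_2} and the first-order condition for $\hatfre$, so only the unwrapping comparison is needed; you instead prove (a) separately through the unrolling ODE \eqref{eq:translated:unrolling}, which works but duplicates effort. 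Two points deserve care if you flesh this out: the operator difference $P^{\gamma_1}_{0\leftarrow s}-P^{\gamma_2}_{0\leftarrow s}$ acts between different tangent spaces and must be interpreted after an identification (coordinates or transport along the connecting geodesics $\sigma_t$, as the paper does); and the claim that the $\phi_i$ are exactly Gaussian with covariance $K_b$ holds only on the high-probability event where the paths stay within the injectivity radius, so the argument should be run conditionally on that event, as the paper does implicitly via Lemma \ref{lem:small_prob}.
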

\begin{remark}
\label{rem:param_rates}
In the parametric model under a basis respresentation of the Gaussian process $z$ in \S\ref{sec:parametric:model:for:curves:on:Rd} with finite $k$ number of basis functions, the estimators (MLEs) of the mean matrix, $M_w$, and row and column covariances, $U_w$ and $V_w$, can be defined in two steps: given $U:\mathbb R^d \to T_bM$, project $x^{\downarrow\hat \gamma_{\text{Fre}}}_{b,i}$ onto the $k$-dimensional subspace spanned by the basis functions to obtain $\hat w_{i,s}, s=1,\ldots,k$; then for each $s$,  the projection of $1/n\sum_{i=1}^n\hat w_{i,s}$ onto the $k$-dimensional subspace is the estimator of column $s$ of $UM_w$, and estimators of the covariances $U_w$ and $V_w$ are computed from the $\hat w_{i,s}$, using an iterative algorithm since no-closed form is available, under an identifiability constraint (e.g. $\text{Tr} (U_w)=d$ or $\text{det}(U_w)=1$)  \citep{dutilleul1999mle}. 
More details are provided in \S\ref{sec:estimation} when estimators for discretely observed curves are discussed. These estimators are will have identical convergence rates to $\hat m$ and $\hat K$ in Theorem \ref{thm:rates}. 

When $k \to \infty$, convergence rate of the estimator of $U_w \in \text{Sym}_{>0}(d)$ remains unchanged. If $\hat M_w$ and $\hat V_w$ are the estimators of $M_w$ and $V_w$, it can be shown that
\[
\|\hat M_w-M_w\|_F=O_P\left(\sqrt{\frac{dk}{n}}\right),\quad \|\hat V_w-V_w\|_F=O_P\left(\frac{k}{\sqrt{n}}\right),
\]
and convergence for $\hat M_w$ requires $k=o(n)$, and convergence for $\hat V_w$ requires $k=o(\sqrt{n})$. 
\end{remark}
We conclude this section with an example of a rolled Gaussian process $x$ with full support on $C^2([0,1],M)$ that satisfies assumptions A1-A5, and hence for which the asymptotic theory in Theorems \ref{thm:C1_convergence} and \ref{thm:rates} applies without caveats. 
\begin{example}
Let $M$ be $\text{Sym}_{>0}(q)$ with $d=q(q+1)/2$ equipped with the affine invariant Riemannian metric; see Appendix \ref{sec:appdx:expressions:for:SPD} for details. Since $M$ is a Hadamard manifold, the population Fr\'echet mean curve $\fre$ for a rolled Gaussian process obtained from an $\mathbb R^d$-valued Gaussian process exists and is unique, and assumption A1 is thus satisfied.
Assumption A5 is satisfied since the second moment of $z$ is finite and $M$ has no conjugate points. Moreover, $-1/2 \leq \sec(p) \leq 0$ for every $p \in M$ \citep[e.g.][Chapter 4]{thanwerdas2022riemannian}, and assumption A4 is satisfied with $\kappa=1/2$. 

Consider the parametric model in \S\ref{sec:parametric:model:for:curves:on:Rd}, where the $\mathbb R^d$-valued Gaussian process $z(t)=\sum_{s=1}^\infty \xi_s \phi_s(t)$ with $\{\phi_s(t):[0,1] \to \mathbb R\}$ as the spline basis. Then, the mean $\mathbb Ez(t)=m(t)=R_0\Phi(t)$ with $\Phi(t)=(\phi_1(t),\phi_2(t),\ldots)$ and the map $R_0: \ell^2 \to \mathbb R^d$ with $R_0(w):=(\langle a_1,w \rangle_{\ell^2},\cdots,\langle a_d,w \rangle_{\ell^2})^\top \in \mathbb R^d$ for sequences $a_i \in \ell^2, i=1,\ldots,d$. Note also that by Theorem \ref{thm:frechet:mean} the rolling of the mean function $m$ equals $\fre$, since $M$ is a Hadamard manifold. Then, $R_0$ is bounded with 
\[
\|R_0(w)\|_{\text{op}} \leq \sqrt{\sum_{i=1}^d \|a_i\|^2_{\ell^2}}.
\]
For the spline basis, let $\Phi_s(t):=(\phi_1(t),\ldots,\phi_s(t))$, with $\sup_{s,t}\|\Phi_s(t)\|<\infty$, since at any given $t$ only a fixed number of basis functions, depending on the order of the spline independent of $s$, are non-zero with each being uniformly bounded. Thus, the mean $m$ is uniformly bounded. The covariance of $z$ equals $V \otimes U$, where $V:\ell^2 \to \ell^2$ of $z$ is bounded and self-adjoint and $U \in \text{Sym}_{>0}(d)$. It is bounded if $\|V\|_{\text{op}} <\infty$, where the operator norm is defined by $\|V\|_{\text{op}}:=\sup\{\|Vw\|_{\ell^2}: w \in \ell^2, \|w\|_{\ell^2}\}$. For example, the diagonal operator $V=\text{diag}(\lambda_1,\lambda_2,\dots)$ with $\lambda_i \geq 0$ and $\sup_i \lambda_i<\infty$ is bounded (e.g. $\lambda_i=(1+i)^{-2}$). Thus assumption A3 is satisfied. Finally, smoothness of the spline basis and assumption A3 ensure that the sample paths of the rolled Gaussian process satisfy assumption A2. 

\end{example}

\section{Estimation and testing for discretely observed curves under a parametric model}
In practice, curve data are recorded in discrete time, and a discrete version of the rolled Gaussian process model is accordingly needed. In this section, we use the parametric model in \S\ref{sec:parametric:model:for:curves:on:Rd} with a finite number, $k$, of basis functions to carry out inference in the discrete-time setting. 
In the discrete setting, measurement noise and the observation times may affect computation, and we thus discuss several practical estimators that may be used even when the rolling of the mean $m_b(t)=M_w\phi(t)$ does not equal $\fre$. The estimator described in Remark \ref{rem:param_rates} related to the discretized version of the estimator $\hat m_b$ in \eqref{eq:rolled_mhat}, as remarked, is indeed consistent. We do not consider the setting where curves are observed at discrete times with additional measurement noise. This too is a practically important situation, and will be taken up in future work. 

Let $t_1, \ldots, t_r$ henceforth be times at which each curve is recorded and stored, with times $t_i = (i-1)/(r-1); i = 1,\ldots,r$ assumed equally spaced on $[0,1]$, and with $r > k$.
There is some loss of generality in assuming curves are recorded at common times, and that the observation times are equally spaced, but the former simplifies notation considerably, and both can be easily relaxed. We use upper case for the discrete-time version of the curve denoted in corresponding lower case; for example, $\Gamma := \{\gamma(t_1), \ldots ,\gamma(t_r)\} \in M^r$ is curve $\gamma$ in discrete time.

The four maps in \S\ref{sec:four:maps} have natural analogues for discrete curves: the unrolling, 
$\Gamma_b^\downarrow$, of $\Gamma$, is the discrete-time curve $\Gamma_b^\downarrow := \{\gamma_b^\downarrow(t_1), \ldots, \gamma_b^\downarrow(t_r) \} \in (T_b M)^r$, where $\gamma_b^\downarrow$ is the unrolling into $T_bM$ of the piecewise geodesic curve, say $\tilde{\gamma}$, that has geodesic segments connecting consecutive points $\gamma(t_{i})$ and $\gamma(t_{i+1})$, such that $U^\top\Gamma_b^\downarrow \in \mathbb R^{d \times r}$ for a frame $U:\mathbb R^d \to T_b M$.
Similarly, $(X)_b^{\downarrow {\Gamma}}$ with
\[
\gamma_b^\downarrow(t_j)+P^c_{0 \leftarrow 1}P^\gamma_{0 \leftarrow t_j} \exp^{-1}_{\gamma(t_j)} \left\{ x(t_j) \right\}
\]
as the $j$th component is the unwrapping of a discrete-time curve $X \in M^r$ with respect to the same piecewise geodesic $\tilde{\gamma}$ 
such that $U^\top (X)_b^{\downarrow {\Gamma}} \in \mathbb R^{d \times r}$. The unflattening maps, rolling and wrapping, for discrete curves follow similarly to these flattening ones; see Appendix \ref{sec:computations:for:rolling:etc:in:discretised:time}.

The discrete-time version of the parametric model in \S\ref{sec:parametric:model:for:curves:on:Rd} with a parametric specification of $m$ and $K$
can be written as 
\begin{equation}
Z \sim \mathcal{MN}\left(M_w \Phi, U_w, \Phi^\top V_w \Phi \right),
\label{eqn:Z:as:MN}
\end{equation}
where $\Phi = \left\{\phi(t_1), \ldots, \phi(t_r) \right\} \in \mathbb R^{k \times r}$ and $Z = \left\{z(t_1), \ldots, z(t_r) \right\} \in \mathbb R^{d \times r}$ are matrices. Then, the corresponding discrete-time version of model $x \sim \mathcal{RGP}(m, K; b, U)$ is denoted by
\begin{equation}
X \sim \mathcal{RMN}(M_w, U_w, V_w; b, U).
\label{eqn:discrete:time:model:for:curves:on:M}
\end{equation}
A random draw from \eqref{eqn:discrete:time:model:for:curves:on:M} is simulated by drawing random $Z$ from \eqref{eqn:Z:as:MN} then, with respect to base point $b$ and basis $U$, wrapping $Z$ with respect to the rolling of the mean $M = M_w \Phi$. We spell out these steps in \S\ref{sec:calcs:in:embedded:coordinates}
in practical embedding coordinates that are convenient for numerical computations.
Section \S\ref{example:rolled:model:on:S2} shows a concrete example in the special case with manifold $M = \mathbb S^2$, the unit 2-sphere in $\mathbb R^3$, of the model \eqref{eqn:discrete:time:model:for:curves:on:M} being prescribed and simulated from; the simulations are shown in  Fig.~\ref{fig:curves:from:toy:model:R2:S2}.

Prior to considering estimation in the next section, it is helpful first to define the following \emph{unwrapping coordinates} for the discretised curves. 

\begin{definition}[Unwrapping coordinates]
\label{def:unwrap_coordinates}
Given curves $\gamma$ and $x$ in $M$, consider, respectively, their discrete-time versions $\Gamma=\{\gamma(t_1),\ldots,{\gamma}(t_r)\}$ and
$X=\{x(t_1),\ldots,{x}(t_r)\}$. Given $(b,U)$, when $x$ and 
$\gamma$ are outside the cut loci of each other, and $b$ is outside the cut locus of $\gamma(0)$,  the unwrapping coordinates of $X$ are defined by the map
\begin{equation}
\label{eqn:unwrapping:coords:H}
(X,\Gamma) \mapsto H(X,\Gamma):=U^\top (X)_b^{\downarrow {\Gamma}}.
\end{equation}
\end{definition}
This definition reverses the steps described for defining $X$ in \eqref{eqn:discrete:time:model:for:curves:on:M}, in which the $X$ were defined in terms of $Z$. Indeed, if the conditions of Remark \ref{remark:inverse:relation:of:unwrapping:wrapping} hold, and if $\Gamma = (U M_w \Phi)^\uparrow_b$ is the rolled path in the generative model, 
then \eqref{eqn:unwrapping:coords:H} implies that $(X_i)_b^{\downarrow \Gamma} = Y_i$,
and, due to \eqref{eqn:Z:as:MN}, 
\begin{equation}
\label{eqn:lik}
H(X_i; \Gamma) = Z_i \sim \mathcal{MN}\left(M_w \Phi, U_w, \Phi^\top V_w \Phi \right),
\end{equation}
that is, the underlying $\{Z_i\}$ in the generative model are exactly recovered. The foregoing steps rely on the $b$ and $U$ in \eqref{eqn:unwrapping:coords:H} being the same $b$ and $U$ in the definition of $\{X_i \sim \mathcal{RMN}(M_w, U_w, V_w; b, U)\}$ in that generative model; however, the following proposition establishes that using alternative $b'$ and $U'$ leads to unwrapping coordinates, and hence model parameters, that differ only equivariantly.
Choose $b'\in M$ and basis $U'$ for $T_{b'}M$, and let $c'$ be a geodesic from $b'$ to $\gamma(0)$.

\begin{proposition}
[Equivariance with respect to $b$, $U$] 
\label{prop:equivariance:in:terms:of:basis}
Let $H'(X_i; \Gamma) = U'^\top (X_i)_{b'}^{\downarrow \Gamma}$, let 
$H(X_i; \Gamma) \sim \mathcal{MN}\left(M_w \Phi, U_w, \Phi^\top V_w \Phi \right)
$, and 
suppose that the constant function is in the span of the basis, $\Phi$.
Then there exists $a \in \mathbb R^d$ and a linear isometry $A : \mathbb R^d \rightarrow \mathbb R^d$ such that
$H' (X_i; \Gamma)= a 1_r^\top + A H (X_i; \Gamma)\sim\mathcal{MN}\left(M'_w \Phi, U'_w, \Phi^\top V_w \Phi \right)$, where
$M_w' = a 1_k^\top + A M_w$ and $U_w'= A U_w A^\top$.
\end{proposition}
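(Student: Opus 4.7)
The plan is to trace how a change of base point from $(b,U)$ to $(b',U')$ transforms the unwrapped curve $(X_i)_b^{\downarrow \Gamma}$ affinely, and then push this through to coordinates and the matrix-normal law. The key observation is that parallel transport along geodesics is a linear isometry, so the composition
\[
\tilde{A} \;:=\; P^{c'}_{0 \leftarrow 1} \circ P^{c}_{1 \leftarrow 0} \colon T_b M \to T_{b'} M
\]
is a linear isometry satisfying $P^{c'}_{0 \leftarrow 1} = \tilde{A}\,P^{c}_{0 \leftarrow 1}$. Applied term-wise to the unwrapping formula \eqref{eqn:translated:unwrapping}, this immediately gives
\[
P^{c'}_{0 \leftarrow 1}P^{\gamma}_{0 \leftarrow t}\exp_{\gamma(t)}^{-1}(x_i(t)) = \tilde{A}\bigl(P^{c}_{0 \leftarrow 1}P^{\gamma}_{0 \leftarrow t}\exp_{\gamma(t)}^{-1}(x_i(t))\bigr).
\]
For the translation component $\gamma^{\downarrow}_{b}(t)$, the defining ODE \eqref{eq:translated:unrolling} has velocity $\dot{\gamma}^{\downarrow}_b(t)=P^c_{0\leftarrow 1}P^\gamma_{0\leftarrow t}\dot{\gamma}(t)$, so $\dot{\gamma}^{\downarrow}_{b'}(t)=\tilde{A}\dot{\gamma}^{\downarrow}_{b}(t)$, whence integration from $0$ yields $\gamma^{\downarrow}_{b'}(t) = \tilde{a}_0 + \tilde{A}\gamma^{\downarrow}_{b}(t)$ with $\tilde{a}_0 := \exp^{-1}_{b'}(\gamma(0)) - \tilde{A}\exp^{-1}_{b}(\gamma(0))$. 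Summing gives the clean relation
\[
(X_i)_{b'}^{\downarrow \Gamma}(t) \;=\; \tilde{a}_0 + \tilde{A}\,(X_i)_{b}^{\downarrow \Gamma}(t).
\]

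Next I would push this into coordinates via the frames $U,U'$. Set $A := U'^{\top}\tilde{A}\,U \colon \mathbb{R}^d\to\mathbb{R}^d$ and $a := U'^{\top}\tilde{a}_0 \in \mathbb{R}^d$; since $U,U',\tilde{A}$ are all linear isometries (the first two as orthonormal frames, the last as a composition of parallel transports), $A$ is itself a linear isometry, i.e.\ $A A^\top = I_d$. Applying $U'^{\top}$ to the displayed identity and stacking over $t_1,\dots,t_r$ produces
\[
H'(X_i;\Gamma) \;=\; a\,1_r^{\top} + A\,H(X_i;\Gamma),
\]
which is the first claim. For the distributional claim, since $H(X_i;\Gamma) \sim \mathcal{MN}(M_w\Phi, U_w, \Phi^{\top}V_w\Phi)$, affine closure of the matrix normal under left multiplication by $A$ and addition of a rank-one matrix $a 1_r^{\top}$ gives
\[
H'(X_i;\Gamma)\sim \mathcal{MN}\bigl(a 1_r^{\top}+A M_w\Phi,\; A U_w A^{\top},\; \Phi^{\top}V_w\Phi\bigr),
\]
so $U_w' = A U_w A^{\top}$ as asserted and the column covariance is unchanged.

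Finally, to write the mean as $M_w'\Phi$ I would invoke the hypothesis that the constant function lies in the span of $\Phi$: this produces a vector $c \in \mathbb{R}^k$ with $c^{\top}\Phi = 1_r^{\top}$, so that $a 1_r^{\top} = a c^{\top}\Phi$ and $M_w' = a c^{\top} + A M_w$ gives the required $M_w'\Phi$. The stated form $M_w' = a 1_k^{\top} + A M_w$ corresponds to the partition-of-unity normalisation $c=1_k$ that holds for the B-spline basis highlighted in \S\ref{sec:parametric:model:for:curves:on:Rd}; in either case only $M_w'\Phi$ is identifiable from the data, so no generality is lost. The main (small) obstacle is bookkeeping the composition of parallel transports carefully enough to see that the two components of \eqref{eqn:translated:unwrapping}---the unrolled mean displacement and the vector field term---transform by the \emph{same} linear isometry $\tilde{A}$; this compatibility is exactly what allows the change of $(b,U)$ to collapse into a single global affine isometry on $\mathbb{R}^d$ rather than a $t$-dependent map.
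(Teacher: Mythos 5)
Your proposal is correct and follows essentially the same route as the paper: both identify the single linear isometry $A = U'^\top P^{c'}_{0\leftarrow 1}P^{c}_{1\leftarrow 0}U$ and offset $a = U'^\top[\exp^{-1}_{b'}\{\gamma(0)\} - P^{c'}_{0\leftarrow 1}P^{c}_{1\leftarrow 0}\exp^{-1}_{b}\{\gamma(0)\}]$ by comparing the unwrapping formulas column-wise, then push the affine map through the matrix normal and absorb $a1_r^\top$ into the basis. Your closing observation that ``constant function in the span'' only yields some $c$ with $c^\top\Phi = 1_r^\top$, and that $M_w' = a1_k^\top + AM_w$ specifically uses the partition-of-unity normalisation $1_k^\top\Phi = 1_r^\top$ of the B-spline basis, is a slightly more careful reading than the paper's, but the substance is identical.
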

Under matrix representations of the parallel transport maps, the operator $A$ is an orthogonal matrix. This Proposition is an analogue of Proposition \ref{prop:equivariance}, specialised to the model \S\ref{sec:parametric:model:for:curves:on:Rd} and to discrete time; it establishes the practical sense in which the choice of $b$, $U$ are inconsequential.


\subsection{Estimation}
\label{sec:estimation}
First we consider the problem in which we are given $\{Z_i\}_{i = 1, \ldots, n}$, each $Z_i$ assumed to be an independent realisation of \eqref{eqn:Z:as:MN}, and the goal is to estimate the parameters $M_w, U_w,$ and $V_w$.
For this, it is helpful to define the right-inverse, $\Phi^- = \Phi^\top(\Phi \Phi^\top)^{-1}$, of $\Phi$ such that $\Phi \Phi^- = I_k$, the $k$-dimensional identity matrix; this right-inverse is unique since the basis $\{\phi_s\}$ defined above is such that $\Phi$ has full row rank, $k$.
Then $\{W_i = Z_i \Phi^-\}_{i=1,...,n}$ are independent realisations from the distribution $\mathcal {MN}(M_w, U_w, V_w)$, for which 
the logarithm of the likelihood function, omitting the constant term not dependent on the parameters, is
\citep{dutilleul1999mle}
\begin{equation}
\ell = 
    -\frac{mn}{2}\log(|U_w|)
    -\frac{kn}{2}\log(|V_w|)
    -\frac{1}{2}\sum_{i=1}^n 
    \operatorname{tr}
    \left\{U_w^{-1}(W_i - M_w) V_w^{-1} (W_i - M_w)^\top \right\}.
    \label{eqn:MN:log:lik}
\end{equation}
The maximum likelihood estimator of the mean matrix is $\hat{M}_w = n^{-1} \sum_{i=1}^n W_i$; this is also the least-squares estimator, that is, the minimiser of
$
M_w \mapsto \sum_{i=1}^n \operatorname{tr}
    \left\{(W_i - M_w) (W_i - M_w)^\top \right\}
$. Maximum likelihood estimators
$\hat{U}_w, \hat{V}_w$ satisfy the pair of equations
\begin{equation}
    \hat{U}_w = \frac{1}{nk} \sum_{i=1}^n (W_i - \hat{M}_w) \hat{V}_w^{-1} (W_i - \hat{M}_w)^\top; \,\hat{V}_w = \frac{1}{nd} \sum_{i=1}^n (W_i - \hat{M}_w)^\top \hat{U}_w^{-1} (W_i - \hat{M}_w),
    \label{eqn:Uw_hat:and:Vw_hat}
\end{equation}
which can be evaluated iteratively until convergence \citep{dutilleul1999mle}. 
We now return to our setting consisting of a sample of discrete-time curves $\{X_i\}_{i = 1, \ldots, n}$, where each $X_i \overset{ind} \sim \mathcal{RMN}(M_w, U_w, V_w; b, U)$ for a fixed point $b \in M$ and basis $U$ of $T_bM$. 
We present multiple estimators of $M_w$, each of which results in different estimators of $U_w$ and $V_w$. The first set of estimates are obtained by maximising a likelihood written using \eqref{eqn:MN:log:lik} for the mean and covariance parameters in \eqref{eqn:lik}; moreover, $H(\Gamma; \Gamma) = M_w \Phi$. 
Alternatively, consider the two estimators: 
\begin{align}
\hat{M}^{\text{LS}}_w &= \underset{M_w}{\operatorname{argmin}}
\sum_{i=1}^n \operatorname{tr}
    \left[\left\{H(X_i; \Gamma)\Phi^- - M_w\right\} \left\{H(X_i; \Gamma)\Phi^- - M_w\right\}^\top \right], \label{eqn:M_w_hat:LS}\\
    \hat{M}^{\text{MLE}}_w &= \underset{M_w}{\operatorname{argmin}}
\sum_{i=1}^n \operatorname{tr}
    \left[\hat{U}_w^{-1} \left\{H(X_i; \Gamma)\Phi^- - M_w\right\} \hat{V}_w^{-1} \left\{H(X_i; \Gamma)\Phi^- - M_w\right\}^\top \right], \label{eqn:M_w_hat:MLE}
\end{align}
in which $\Gamma = \Gamma(M_w)$, that is, in \eqref{eqn:M_w_hat:LS} and \eqref{eqn:M_w_hat:MLE}, since $\Gamma$ is determined by $M_w$, it is implicitly being estimated jointly with $M_w$. The estimators $\hat{M}^{\text{LS}}_w$ and 
$\hat{M}^{\text{MLE}}_w$ require numerical optimisation; in computations we have used the Broyden--Fletcher--Goldfarb--Shannon algorithm as implemented in the \texttt{optim} function in \texttt{R} \citep{R:citation}. Estimators of $U_w$ and $V_w$ are
\begin{align}
    \hat{U}_w &= \frac{1}{nk} \sum_{i=1}^n \left\{H (X_i; \Gamma(\hat{M}_w) )\Phi^-  - \hat{M}_w\right\} \hat{V}_w^{-1} \left\{H(X_i; \Gamma(\hat{M}_w))\Phi^- - \hat{M}_w\right\}^\top, 
    \label{eqn:Uw_hat:in:terms:of:Mw_hat} \\
    \hat{V}_w &= \frac{1}{nd} \sum_{i=1}^n \left\{H(X_i; \Gamma(\hat{M}_w))\Phi^- - \hat{M}_w\right\}^\top \hat{U}_w^{-1} \left\{H(X_i; \Gamma(\hat{M}_w))\Phi^- - \hat{M}_w \right\},
    \label{eqn:Vw_hat:in:terms:of:Mw_hat}
\end{align}
related to \eqref{eqn:Uw_hat:and:Vw_hat}, 
in which $\hat{M}_w \in \{\hat{M}^{\text{LS}}_w, \hat{M}^{\text{MLE}}_w\}$. If $\hat{M}_w$ is taken to be $\hat{M}^{\text{LS}}_w$, then it can be evaluated once at the outset, followed by iterative evaluations of \eqref{eqn:Uw_hat:in:terms:of:Mw_hat} and \eqref{eqn:Vw_hat:in:terms:of:Mw_hat}. If $\hat{M}_w$ is taken to be $\hat{M}^{\text{MLE}}_w$ then, owing to its dependence on $\hat{U}_w$ and $\hat{V}_w$, it becomes necessary to evaluate \eqref{eqn:M_w_hat:MLE}, \eqref{eqn:Uw_hat:in:terms:of:Mw_hat} and \eqref{eqn:Vw_hat:in:terms:of:Mw_hat} iteratively. One reason that $\hat{M}^{\text{MLE}}_w$ might be preferred in spite of this extra computational cost is that in the case where the manifold $M$ has non-positive sectional curvatures then $\hat{M}^{\text{MLE}}_w$ together with $\hat{U}_w$ $\hat{V}_w$ in  (\ref{eqn:Uw_hat:in:terms:of:Mw_hat}, \ref{eqn:Vw_hat:in:terms:of:Mw_hat}) are asymptotically, for large $n$, the maximum likelihood estimators, and are exactly so when $\Gamma$ is the data-generating curve.

Another estimator of $\Gamma$ is based on the discretization of the sample Fr\'echet mean curve, when it exists, which can then be used to estimate $M_w$. This is closely related to the discretization of the estimator $\hat m_r$ in \eqref{eq:rolled_mhat}. Let   
$\hat{\Gamma}_\text{Fre} = \{\hat{\gamma}_\text{Fre}(t_1), \ldots, \hat{\gamma}_\text{Fre}(t_n)\}$ be discrete version of the sample Fr\'echet mean $\hatfre$. An estimator of $M_w$ based on the unwrapped coordinates with respect to $\hat{\Gamma}_\text{Fre}$ is 
\begin{align}
  \hat{M}^{\text{Fre}}_w & = \underset{M_w}{\operatorname{argmin}}
\sum_{i=1}^n \operatorname{tr}
    \left[\left\{H(X_i; \hat{\Gamma}_\text{Fre})\Phi^- - M_w\right\} \left\{H(X_i; \hat{\Gamma}_\text{Fre})\Phi^- - M_w\right\}^\top \right].
    \label{eqn:M_w_hat:Fre}
\end{align}
The estimator $\hat{M}^{\text{Fre}}_w$ is exactly the one described in Remark \ref{rem:param_rates} as the projection of the mean of the unwrapped discrete curves $X_i$ onto the $k$-dimensional subspace spanned by $\Phi$, and has a closed-form expression. 
\begin{proposition}[Estimator $\hat{M}_w^{\text{Fre}}$ via unrolling of $\hatfre$]
\label{prop:mhat_fre}
Under assumption A1  
\[\hat{M}^{\text{Fre}}_w = 
H(\hat{\Gamma}_\text{Fre}; \hat{\Gamma}_\text{Fre}) \Phi^-\thickspace.
\]
\end{proposition}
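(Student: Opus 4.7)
The plan is to exploit two facts: the least-squares form of the objective in \eqref{eqn:M_w_hat:Fre}, and the first-order characterisation of the sample Fr\'echet mean under assumption A1, which makes averaging commute with unwrapping along $\hat\Gamma_{\text{Fre}}$.

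First I would observe that because $\hat\Gamma_{\text{Fre}}$ does not depend on $M_w$, the map $M_w \mapsto \sum_i \operatorname{tr}\bigl[\{H(X_i;\hat\Gamma_{\text{Fre}})\Phi^- - M_w\}\{H(X_i;\hat\Gamma_{\text{Fre}})\Phi^- - M_w\}^\top\bigr]$ is a standard matrix least-squares objective whose unique minimiser is the empirical mean, giving
\[
\hat{M}^{\text{Fre}}_w = \frac{1}{n}\sum_{i=1}^n H(X_i;\hat\Gamma_{\text{Fre}})\Phi^-.
\]
It therefore suffices to prove the identity $\tfrac{1}{n}\sum_{i=1}^n H(X_i;\hat\Gamma_{\text{Fre}}) = H(\hat\Gamma_{\text{Fre}};\hat\Gamma_{\text{Fre}})$ and then right-multiply by $\Phi^-$.

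Next I would expand the left-hand side using Definition \ref{def:unwrap_coordinates} and the component-wise formula for the unwrapping of a discrete curve: for each observation time $t_j$,
\[
(X_i)_b^{\downarrow \hat\Gamma_{\text{Fre}}}(t_j) = (\hat\gamma_{\text{Fre}})_b^{\downarrow}(t_j) + P^{\hat c}_{0\leftarrow 1} P^{\hat\gamma_{\text{Fre}}}_{0 \leftarrow t_j} \exp^{-1}_{\hat\gamma_{\text{Fre}}(t_j)}\{x_i(t_j)\}.
\]
Averaging over $i$, the first summand is deterministic in $i$ and passes through; linearity of $U^\top$ and the parallel-transport maps lets me pull the sum inside to reach the term $P^{\hat c}_{0\leftarrow 1} P^{\hat\gamma_{\text{Fre}}}_{0\leftarrow t_j}\bigl\{\tfrac{1}{n}\sum_{i=1}^n \exp^{-1}_{\hat\gamma_{\text{Fre}}(t_j)}(x_i(t_j))\bigr\}$. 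Under assumption A1 the sample Fr\'echet mean $\hat\gamma_{\text{Fre}}(t_j)$ is characterised as a zero of the gradient of $F_n(\cdot,t_j)$, which is precisely $\tfrac{1}{n}\sum_i \exp^{-1}_{\hat\gamma_{\text{Fre}}(t_j)}(x_i(t_j)) = 0$ (the fact the author already invokes when motivating \eqref{eq:m_2}). Hence the bracketed sum vanishes at every $t_j$, leaving $\tfrac{1}{n}\sum_{i=1}^n (X_i)_b^{\downarrow \hat\Gamma_{\text{Fre}}}(t_j) = (\hat\gamma_{\text{Fre}})_b^{\downarrow}(t_j)$.

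Finally I would invoke the remark that the unwrapping of a curve with respect to itself equals its unrolling, so $(\hat\Gamma_{\text{Fre}})_b^{\downarrow \hat\Gamma_{\text{Fre}}} = (\hat\Gamma_{\text{Fre}})_b^{\downarrow}$, and therefore $H(\hat\Gamma_{\text{Fre}};\hat\Gamma_{\text{Fre}}) = U^\top (\hat\Gamma_{\text{Fre}})_b^{\downarrow}$. Combining this with the display from the previous paragraph gives $\tfrac{1}{n}\sum_i H(X_i;\hat\Gamma_{\text{Fre}}) = H(\hat\Gamma_{\text{Fre}};\hat\Gamma_{\text{Fre}})$, and right-multiplying by $\Phi^-$ completes the proof. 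There is no real obstacle here: the result is essentially an identity, with the only non-trivial ingredient being the zero-gradient characterisation of the Fr\'echet mean under A1 (which guarantees $\hat\gamma_{\text{Fre}}(t_j)$ is interior to cut loci almost surely, so that each inverse exponential is well-defined). A brief sentence should be added to confirm that $\Phi$ has full row rank so that $\Phi^-$ is well-defined and the final multiplication is valid.
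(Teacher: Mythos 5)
Your proposal is correct and follows essentially the same route as the paper's proof: identify the least-squares minimiser as the empirical mean of the $H(X_i;\hat\Gamma_{\text{Fre}})\Phi^-$, expand the $j$th column of the unwrapping coordinates, kill the averaged inverse-exponential term via the first-order characterisation of the sample Fr\'echet mean, and use linearity of the parallel transports together with the fact that unwrapping a curve with respect to itself is its unrolling. The added remarks on the full row rank of $\Phi$ and well-definedness of the inverse exponentials are sensible but already covered by the paper's standing assumptions.
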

As discussed in Remark \ref{rem:param_rates}, under assumptions A1-A5, we have $\|\hat{M}^{\text{Fre}}_w-M_w\|_F=O_P(n^{-1/2})$. In practice, especially for data on $M$ with small variability for each $t$,  we found barely any appreciable difference between estimators $\hat{M}^{\text{Fre}}_w$, $\hat{M}^{\text{LS}}_w$, $\hat{M}^{\text{MLE}}_w$.

\subsection{A two-sample test for equality of mean curves}
\label{sec:two:sample:test}
The flattening operation using the unrolling and unwrapping maps enable development of inferential tools for samples of curves on $M$. As a simple illustration, we briefly discuss how this can be done for a two-sample test of equality of means of two samples $\{X_1^{(1)}, \ldots, X_{n_1}^{(1)}\}$ and $\{X_1^{(2)}, \ldots, X_{n_2}^{(2)}\}$  of curves on $M$, which we anticipate will be relevant in many applications.

When $n_1+n_2$ is large enough, it is sensible to consider a test statistic similar to the Hotelling's $T^2$ statistic. This is so, since, under the rolled Gaussian process model, the unwrapping coordinates for the two samples $\{H(X_1,\hat \Gamma_{\text{fre}})^{(1)}, \ldots, H(X_{n_1},\hat \Gamma_{\text{fre}})^{(1)}\}$ and $\{H(X_1,\hat \Gamma_{\text{fre}})^{(2)}, \ldots, H(X_{n_2},\hat \Gamma_{\text{fre}})^{(2)}\}$ are approximately Gaussian distributed, where $\hat \Gamma_{\text{fre}}$ is the discretized sample Fr\'echet mean curve of the pooled sample. If the population version $\Gamma_{\text{fre}}$ of the pooled sample is used in place of $\hat \Gamma_{\text{fre}}$, then the unwrapping coordinates are exactly Gaussian distributed since they are finite-dimensional projections of the covariant Gaussian vector in Proposition \ref{prop:vector_field}. 

Now, suppose

\[
X_i^{(g)}
\sim \mathcal{RMN}(M_w^{(g)}, U_w, V_w; b, U),
\]
in which for $g=1,2$ the mean parameters are possibly different, but the covariance parameters are assumed common. We wish to test $H_0: M_w^{(1)} = M_w^{(2)}$ versus the alternative $H_1: M_w^{(1)} \neq M_w^{(2)}$. From \S\ref{sec:estimation}, given estimates $\hat{M}^{(g)}_w, \hat{U}^{(g)}_w, \hat{V}^{(g)}_w$ for each sample, the analogue of the Hotelling $T^2$ statistic is
\begin{equation}
J = \text{tr} \left\{ \hat{V}_w^{-1} (\hat{M}_w^{(1)} - \hat{M}_w^{(2)})^\top \hat{U}_w^{-1} (\hat{M}_w^{(1)} - \hat{M}_w^{(2)})\right\},
\label{eqn:hotelling:type:stat}
\end{equation}
in which $\hat{U}_w = \{(n_{1} - 1)\hat{U}^{(1)}_w + 
(n_{2} - 1)\hat{U}^{(2)}_w\}/(n_{1} + n_{2} - 2)$ is a pooled estimate of $U_w$,
and $V_w$ is defined similarly. The null hypothesis is rejected for suitably large $J$ with respect to its null distribution, which is accessed by resampling. For a permutation test, the procedure is: pool the two samples as 
$\{X_1^{(1)}, \ldots, X_{n_1}^{(1)}, X_1^{(2)}, \ldots, X_{n_2}^{(2)}\}$ then partition randomly to create resampled data,
$\{X_1^{(1)*}, \ldots, X_{n_1}^{(1)*}\}$ and $\{X_1^{(2)*}, \ldots, X_{n_2}^{(2)*}\}$, say, from which to compute the statistic \eqref{eqn:hotelling:type:stat}. Doing this a large number, say $R$, times gives resampled statistics $J_1^*, ..., J_R^*$ that can be treated as a sample from the null distribution of $J$. The $p$-value can be computed as $p = \{1+\sum_r^R \mathbb I(J_r^* > J)\}/(1+R)$, where $\mathbb I$ is the indicator function. The step of randomly partitioning the pooled data can be replaced by sampling with replacement from the pooled data, to give a bootstrap test. Results later in \S\ref{sec:robot:SO3} use the permutation version described above.

 The two-sample test offers one compelling illustration of the inferential utility of the rolled Gaussian process model for curves observed in discrete time. Evidently, the estimators of $M_w, U_w, V_w$ may be used to develop many other inferential procedures. We also note that various multivariate test statistics that testing equality of the mean may be used; our purpose in this section is to merely demonstrate the utility of one such sensible one.

\section{Practical calculations in embedded coordinates}
\label{sec:calcs:in:embedded:coordinates}

For computations, it is convenient to regard $M$ as a $d$-dimensional embedded Riemannian submanifold of $\mathbb R^q$, for $d < q$, with the induced metric from $\mathbb R^q$; when $d=q$, $M$ is an open submanifold of $\mathbb R^q$, and specific Riemannian metrics may be chosen. Given the global coordinates of $\mathbb R^q$, a point $p$ in $M$ and vector in $T_pM$ are both represented as vectors in $\mathbb R^q$. With such a choice of coordinates, it is natural to identify the tangent space $T_pM$ with the $d$-dimensional affine subspace $\{p+T_pM\}$ in $\mathbb R^q$. 
Thus, in coordinates of $\mathbb R^q$, the unrolling 
$\gamma_b^{\downarrow}$ of $\gamma$ satisfies \eqref{eq:translated:unrolling} subject to initial condition $ \gamma^{\downarrow}_b(0) = b + \exp^{-1}_b\left\{\gamma(0)\right\}$, with solution
\begin{equation} \label{eq:translated:unrolling:soln:embedded}
\gamma^\downarrow_b(t) = b + 
\exp^{-1}_b\{\gamma(0)\} + P^c_{0 \leftarrow 1} 
\left\{\gamma^\downarrow(t) - 
\gamma^\downarrow(0)\right\};
\end{equation}
rolling $\gamma_b^{\uparrow}$ is defined as previously as the unique inverse of unrolling,
and the unwrapping and wrapping maps are as defined before in (\ref{eqn:translated:unwrapping}-\ref{eqn:translated:wrapping}) with elements of $M$ or $T_bM$ being vectors in $\mathbb R^q$, and each parallel transport being a matrix, $\mathbb{R}^{q \times q}$. 
For the rolled Gaussian process $x \sim \mathcal{RGP}(m, K; b, U)$ in embedded coordinates, basis $U = (u_1, \ldots, u_d) \in \mathbb R^{q \times d}$ is a matrix with $d$ orthonormal columns, and the Gaussian process $z \sim \mathcal{GP}(m,K)$ is such that $z(t) = \sum_{i=1}^d e_i z(t)$ where $\{e_i\}$ is the standard basis for $\mathbb R^d$; then for $m_b(t) = b + \sum_{i=1}^d u_i m(t)$ and 
$y: y(t) = b + \sum_{i=1}^d u_i z(t)$, $\gamma = {m}_b^\uparrow$ is the rolled mean, and finally $x = y_b^{\uparrow \gamma}$.

Recall that the discrete-time model \eqref{eqn:discrete:time:model:for:curves:on:M} for curves on $M$ is based upon rolling and wrapping curves from the underlying model \eqref{eqn:Z:as:MN} for curves on $\mathbb R^d$.
Let
$M_b = b1_r^\top + U M_w \Phi$ be the mean curve in \eqref{eqn:Z:as:MN} identified as a curve in $T_bM$, and $\Gamma = (M_b)^\uparrow_b$ be its rolling onto $M$; for $Z$ a
random curve from \eqref{eqn:Z:as:MN}, $Y = b 1_r^\top+ U Z$ is the corresponding
curve in $T_bM$, 
and $X = Y_b^{\uparrow \Gamma}$ its wrapping with respect to $\Gamma$
onto $M$. 

For a curve $X$ on $M$, the curve's unwrapping coordinates \eqref{eqn:unwrapping:coords:H}, with the unwrapping being with respect to the curve $\Gamma$, are
\begin{equation}
    H(X_i ; {\Gamma}) = U^\top \left\{(X_i)_b^{\downarrow {\Gamma}} - b 1_r^\top\right\}.
    \label{eqn:unwrapped:curve:T:embedded}
\end{equation}

\section{Numerical examples}

\begin{figure}
    \centering
    \includegraphics[scale = 0.45, trim = {0cm 0cm 1cm 1.2cm}, clip]{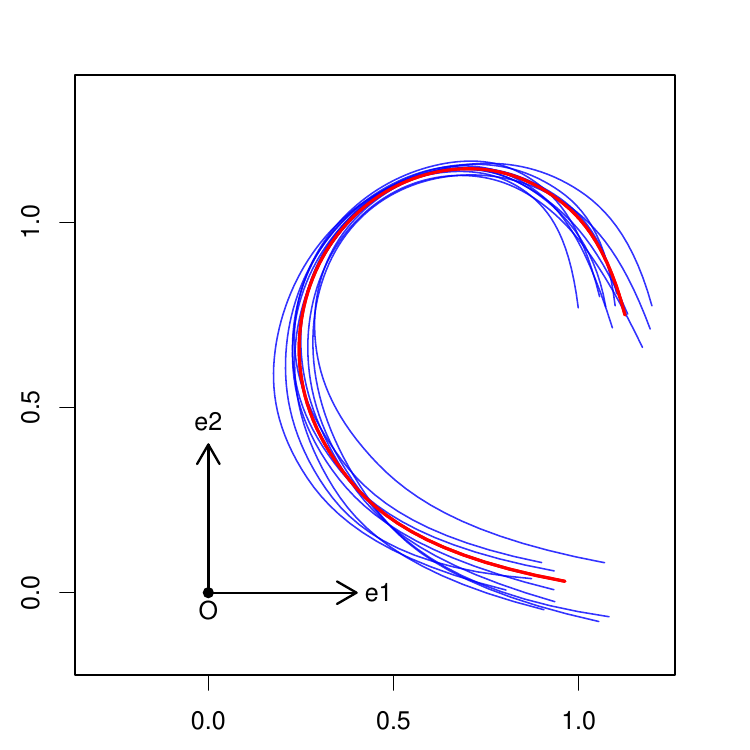}
    \put(-130,135){(a)}
    \includegraphics[scale = 0.33, trim={4cm 5cm 5cm 5.5cm}, clip]{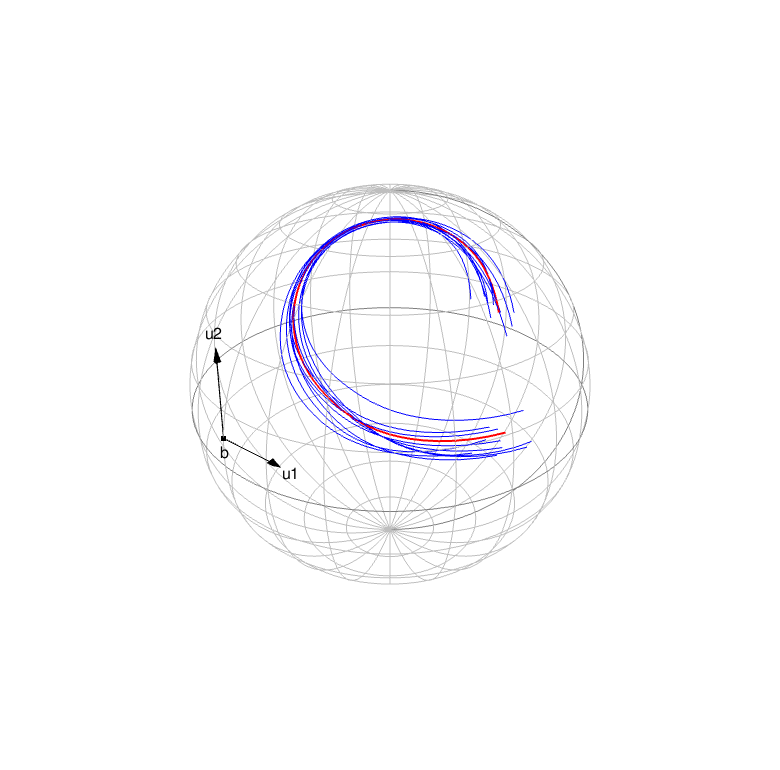}
    \put(-145,135){(b)}
    \caption{(a) Curves from the model in $\mathbb R^2$ described in  \S\ref{example:rolled:model:on:S2}. (b) Corresponding rolled and wrapped curves on $\mathbb S^2$.}
    \label{fig:curves:from:toy:model:R2:S2}
\end{figure}

\subsection{A prescribed model for heteroscedastic curves on $\mathbb S^2$}\label{example:rolled:model:on:S2}
Here we give an example for $M=\mathbb S^2$,
of the steps described in the preceding section to simulate from 
\eqref{eqn:discrete:time:model:for:curves:on:M} for curves on $\mathbb S^2$: first define a model \eqref{eqn:Z:as:MN} in $\mathbb R^2$, with prescribed parameters $M_w, U_w, V_w$; simulate from the model in $\mathbb R^2$; then roll and wrap to produce curves $\mathbb S^2$ that are samples from \eqref{eqn:discrete:time:model:for:curves:on:M}.
Let $s_1, \ldots, s_k$ be an equally spaced sequence of points spanning $[0,1]$, $M_w \in \mathbb R^{2 \times k}$ be the matrix with $i$th column equal to $3/4 \cdot \left[(1,1) + (1/2 + s_i^2/2)\cdot \left\{\cos(5 s_i), \sin(5 s_i) \right\} \right]^\top$, and $k=10$. The mean curve, $m(t) = M_w \phi(t)$, evaluated discretely as $M_w \Phi$ at $r = 100$ values of $t$ equally spaced on $[0,1]$, is shown as the red line in Fig.~\ref{fig:curves:from:toy:model:R2:S2}(a). To prescribe the covariance parameters, let $U_w = I_2$, and let $V_w$ be the matrix with $(i,j)$th element equal to $a_i a_j \rho^{|i-j|}$, with $a_i = 1 + 3/4 \cdot \cos(2 \pi i/k)$ and $\rho = 0.9$. This prescription produces curves in $\mathbb R^2$ with smooth variation and some moderate heteroscedasticity. 
A random curve from this model is
$z_i(t) = W_i \phi(t)$, with $W_i \sim \mathcal {MN}(M_w, U_w, V_w)$,  evaluated numerically as $Z_i = W_i \Phi$.
Fig.~\ref{fig:curves:from:toy:model:R2:S2}(a) shows a sample of $n=10$ curves from this model. 

To roll the model onto $\mathbb S^2$, using the embedding coordinates of $\mathbb R^3$, we fix $b = (51)^{-1} (-5, -5, 1)^\top \in \mathbb S^2$, an arbitrary choice, and fix $U \in \mathbb R^{3 \times 2}$, the basis matrix for $T_b\mathbb S^2$, by taking its columns to be the left singular vectors of $(I_3 - b b^\top)$, excluding the column corresponding to the zero singular value. The rolled mean is $\Gamma = (b 1_r^\top + U M_w \Phi)^\uparrow_b $, and the random curves wrapped with respect to this rolled mean are $X_i = (b 1_r^\top + U Z_i)^{\uparrow \Gamma}_b \sim \mathcal{RMN}(M_w, U_w, V_w; b, U)$.  Figure~\ref{fig:curves:from:toy:model:R2:S2}(b) shows the curves on $M$; red is the rolled mean, $\Gamma$, and blue are the wrapped curves, $\{X_i\}$.

\subsection{Curves on $\operatorname{Sym}_{> 0}(2)$}\label{example:rolled:model:on:SPD}
We equip the three-dimensional manifold $\operatorname{Sym}_{> 0}(2)$ with the affine-invariant metric, $d(\cdot, \cdot)$, which makes it negatively curved \citep[e.g.][Chapter 3]{pennec2019riemannian}; see \ref{sec:appdx:expressions:for:SPD} for definition of $d(\cdot, \cdot)$. The global coordinates of $\mathbb  R^3$ is used to define a model with parameters $M_w, U_w, V_w$ and roll onto $\operatorname{Sym}_{> 0}(2)$, and then infer the parameters from simulated data.
With $s_1, \ldots, s_k$ an equally spaced sequence of points spanning $[0,1]$ and $k=5$, let
$M_w \in \mathbb R^{3 \times k}$ be the matrix with $i$th column equal to $3/4 \cdot \left[
(1/5)\cdot \left\{\cos(5 s_i), \sin(5 s_i), s_i \right\} \right]^\top$, $U_w = I_3$, and $V_w$ be the matrix with $(i,j)$th element equal to $10^-3 a_i a_j \rho^{|i-j|}$, with $a_i$, $\rho$ as in \S \ref{example:rolled:model:on:S2}. We take the tangent point $b = I_2$ and basis for tangent space $U = I_3$, then simulate data $\{X_i\}$ as in \S\ref{example:rolled:model:on:S2}. Table \ref{table:spd} shows numerical results from simulating the data then fitting the model to the simulated data, to investigate convergence of estimators $\hat{M}_w = \hat{M}_w^\text{Fre}$ from \eqref{eqn:M_w_hat:Fre} and $\hat{U}_w$ and $\hat{V}_w$ from 
\eqref{eqn:Uw_hat:in:terms:of:Mw_hat} and \eqref{eqn:Vw_hat:in:terms:of:Mw_hat} to their data-generating values. Results in the table show each estimator, with respect to the indicated metric, getting closer to  towards its population counterpart as the sample size, $n$, increases; a Mahalaonobis-type metric is used for $\hat{M}_w $ and $d(\cdot,\cdot)$ is used for $\hat{U}_w$ and $\hat{V}_w $.

\begin{table}
\def~{\hphantom{0}}
{%
\begin{tabular}{llccccc}
\hline
Estimator & Metric & \multicolumn{5}{c}{Sample Size, $n$} \\
\hline
      & & 10 & 25 & 50 & 100 & 500 \\
$\hat{M}_w$ & 
$\operatorname{tr}
    \{U_w^{-1}(\hat{M}_w - M_w) V_w^{-1} (\hat{M}_w - M_w)^\top \}$ & 2.19 & 0.46 & 0.30 & 0.14 & 0.10 \\
$\hat{U}_w$ & $d(\hat{U}_w, U_w)$ & 0.47 & 0.27 & 0.19 & 0.15 & 0.06 \\
$\hat{V}_w$ & $d(\hat{V}_w, V_w)$ & 1.16 & 0.72 & 0.55 & 0.32 & 0.15 \\
\end{tabular}}
\caption{Results of numerical investigation into convergence of parameter estimates with increasing $n$, for the $\operatorname{Sym}_{> 0}(2)$ model in \S\ref{example:rolled:model:on:SPD} }
\label{table:spd}
\end{table}


\section{Application: orientation of end-effector of a robot arm, curves on $SO(3)$}\label{sec:robot:SO3}

\begin{figure}
    \centering
    \includegraphics[scale=0.3, trim={3.5cm 4.5cm 23.5cm 5cm}, clip]{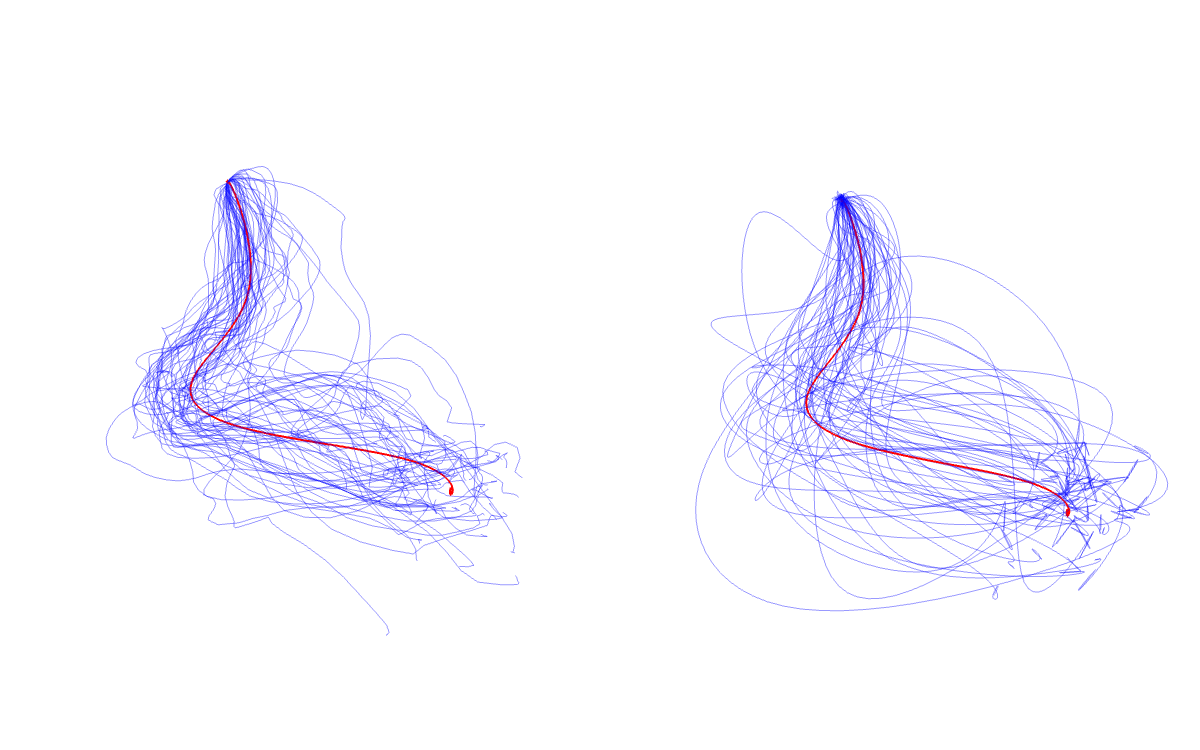}
    \put(-125,105){(a)}
    \includegraphics[scale=0.3, trim={24cm 4.5cm 0.8cm 4.55cm}, clip]{art/SO3_curves_in_R3_projected.png}
    \put(-150,105){(b)}
    \includegraphics[scale = 0.65]{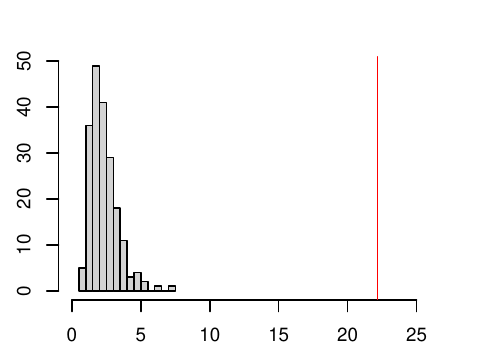}
    \put(-138,105){(c)}
    \caption{For robot SO(3) curves described in \S\ref{sec:robot:SO3}: (a) unwrapped data, blue, and unrolled fitted mean, red; (b) simulations from the fitted Gaussian process model; (c) bootstrap null distribution for test described in text.}
    \label{fig:robot:SO3}
\end{figure}

The data are orientations of the end-effector of a Franka robot arm as it was guided $n=60$ times by the third author to perform a task to deposit the contents of a dustpan into a bin. The orientation can be represented by an element of ${SO}(3)$, the set of $3$-by-$3$ orthogonal matrices with unit determinant, that describes its rotation from an initial reference orientation. An element of $SO(3)$ may be parameterised as an unsigned unit quaternion, and $SO(3)$ is hence identified with $\mathbb S^3$ modulo the antipodal map \citep{prentice1986orientation}. Fixing the sign of each data point then identifies $SO(3)$ with a hemisphere of $\mathbb S^3$, which is our manifold $M$ equipped with the induced geometry of $\mathbb S^3$, whose geometry is inherited from $\mathbb R^4$. 

The curves, $X_i$, are recorded at $r = 100$ time points. We choose basis dimension $k = 10$, and set $b = e_1$ and $U = (e_2, e_3, e_4)$, where $\{e_i\}$ is the standard basis for $\mathbb R^4$. 
We compute the estimated mean curve $\hat{\Gamma} = (b1_r^\top + U \hat{M}_w \Phi)_b^\uparrow$, with $\hat{M}_w$ estimated as in \eqref{eqn:M_w_hat:LS}. Then the unwrapping coordinates, $H_i = H(X_i; \hat{\Gamma})$ for $H(\cdot; \cdot)$ in \eqref{eqn:unwrapping:coords:H}, are curves in $\mathbb R^3$; these are shown as blue curves in Fig.~\ref{fig:robot:SO3}(a), together with corresponding unrolled mean, $H(\hat{\Gamma}; \hat{\Gamma})$, 
plotted in red.
The curves have a common starting point at $t=0$, shown near the top-left in this plot; for small $t$ the variability is small, but then at larger $t$ there is much larger variability, especially following a kink point that corresponds to the dustpan being turned to empty its contents. We further estimate the covariance matrices $U_w$ and $V_w$ in \eqref{eqn:discrete:time:model:for:curves:on:M} using (\ref{eqn:Uw_hat:in:terms:of:Mw_hat}, \ref{eqn:Vw_hat:in:terms:of:Mw_hat}). As a visual appraisal of the fitted model, Fig.~\ref{fig:robot:SO3}(b) shows $n=60$ realisations from $\mathcal{RMN}(\hat{M}_w, \hat{U}_w, \hat{V}_w; b, U)$,  using the same unwrapping coordinates and projection as in Fig.~\ref{fig:robot:SO3}(a). These simulated curves are smoother than the real data, which is a consequence of the basis used, but they have similar 
heteroscedastic variation.

A feature of these data is that they were collected over two sessions, so the data are in fact two samples each of $n^{(g)} = 30$ curves, where $g \in \{ 1,2\}$ indexes the sample. An interesting statistical question is whether the two samples have different statistical characteristics. We perform the test described in \S\ref{sec:two:sample:test} to test the null hypothesis that two samples arise from populations with equal mean curve.
Figure \ref{fig:robot:SO3}(c) 
shows a histogram of values of test statistic $J$ in \eqref{eqn:hotelling:type:stat} simulated under the null hypothesis of equal means, using the permutation procedure described in \S\ref{sec:two:sample:test} with $R=200$ resamples, and the red line shows the observed value of the statistic $J = 22.17$ computed from the data. This observed value is extreme compared with the null distribution, providing conclusive evidence against the null hypothesis of equal means.

\section{Discussion}
Sample path and distributional properties of Euclidean Gaussian processes have been extensively studied. There is much to investigate about analogous properties of the rolled Gaussian process, and this constitutes ongoing work. Moreover, the rolling approach  presented here extends beyond the Gaussian process setting, and in future work we will explore such models. 

For our examples, we leveraged closed-form expressions for exponential, inverse exponential, and parallel transport maps when computing with the four maps in \S\ref{sec:four:maps}. On manifolds lacking these closed-form expressions, approximations via retractions and their inverses offer practical alternatives \citep{absil2009optimization}. See also \cite{nguyen2025parallel} for matrix manifolds. 

Within the Gaussian process setting of this paper, we have specialised to a particular parameterization of the mean and covariance, and assumed common and equispace observation times; these provide modelling and computational convenience but are straightforward to relax. When sample curves are observed sparsely at irregularly spaced observation times, and perhaps in the presence of measurement error, some form of smoothing or resampling of the curves is required to ensure that the design points are common to all curves. Once this is done, the proposed estimators of the mean $m$ and $K$ covariance functions of the Euclidean Gaussian process, defined using unwrapping coordinates \S\ref{sec:estimation} may be used. In future work, we will develop the corresponding asymptotic theory for such estimators. 

 Relatedly, a further generalization is to incorporate a model of nuisance variation in the time variable via warping. This would accommodate the possibility of time registration of observed curves, often valuable in analogous Euclidean settings \citep{zhang2018phase, SSKS}. However, for rolled models, warping combined with rolling raises challenges for computation and inference that remain to be resolved.

\section{Acknowledgements}
We thank Huiling Le for helpful discussions. SP acknowledges support from a grant from the Engineering and Physical Sciences Research Council (EPSRC EP/T003928/1).  KB acknowledges support from grants from the Engineering and Physical Sciences Research Council (EP/Z003377/1; EPSRC EP/X022617/1), National Science Foundation (NSF 2015374), and the National Insitutes of Health (NIH R37-CA21495). 

\appendix


\section{Appendix}
\subsection{Gaussian distribution on inner product spaces}
\label{sec:Gaussian}
We briefly review the concepts required for our purposes and refer to \cite{EatonBook} for more details. Let $(X,\langle \cdot, \cdot \rangle_X)$ be a $d$-dimensional vector space. With respect to a probability space $(\Omega, \mathcal F, \mathbb P)$ if a random vector $Z$ in $X$ is such that $E(\langle x, Z \rangle_X)<\infty$ for every $x \in X$, there there exists a unique vector $\mu \in X$, known as the \emph{mean} of $Z$, such that $E(\langle x, Z \rangle_X)=\langle \mu, Z \rangle_X$; we can employ the notation $\mu=EZ$. 

In similar fashion, with $\text{Cov}$ as the usual covariance between real-valued random variables, if $E(\langle x, Z \rangle_X^2)<\infty$ for every $x\in X$, the unique non-negative linear operator $\Sigma:X \to X$ that satisfies 
\[
\text{Cov}(\langle x, Z \rangle_X, \langle y, Z \rangle_X)=\langle x, \Sigma \thinspace y \rangle_X, \quad \forall \thickspace x,y \in X,
\]
is referred to as the \emph{covariance} of $Z$. Let $(Y,\langle \cdot, \cdot \rangle_Y)$ be another $d$-dimensional inner product space, and $\mathcal L(X,Y)$ be the space of bounded linear operators from $X$ to $Y$. Given a random vector $Z$ in $X$ with mean $\mu$ and covariance $\Sigma$, the random vector $CZ$ for $C \in \mathcal L(X,Y)$ has mean $C\mu$ and $C\Sigma C^\top$, where $C^\top \in \mathcal L(Y,X)$ is the unique adjoint of $C$ which satisfies $\langle b, Ca \rangle_Y= \langle C^\top b,a \rangle_X$ for all $a \in X$ and $b \in Y$.

The following provides a basis-free definition of a Gaussian distribution an inner product space.
\begin{definition}
 A random vector $Z \in (X,\langle \cdot,\cdot \rangle_X)$ has a Gaussian distribution with mean $\mu$ and covariance $\Sigma$ if for every $x \in X$ the real-valued random variable $\langle x,Z \rangle$ has a Gaussian distribution on $\mathbb R$ with mean $\langle x,\mu \rangle_X$ and variance $\langle x, \Sigma x \rangle_X$. 
\end{definition}

With respect to $(\Omega, \mathcal F, \mathbb P)$ suppose $X_1$ and $X_2$ are two random vectors on $(X_1,\langle \cdot, \cdot \rangle_{X_1})$ and $(X_2,\langle \cdot, \cdot \rangle_{X_2})$, respectively, with covariances $\Sigma_1$ and $\Sigma_2$. The random vector $(X_1,X_2)$ then assumes values in the direct sum space $X_1 \oplus X_2$ equipped with the inner product $\langle \cdot, \cdot \rangle_{X_1}+\langle \cdot, \cdot \rangle_{X_2}$. A well-defined bilinear map $(x_1,x_2) \mapsto \text{Cov}(\langle x_1, X_1 \rangle_{X_1}, \langle x_2, X_2 \rangle_{X_2})$ that is positive semidefinite ensures the existence of a unique operator $\Sigma_{12} \in \mathcal L(X_1,X_2)$. Then, the linear operator $\Sigma \in \mathcal L(X_1 \oplus X_2,X_1 \oplus X_2)$ defined as $\Sigma(x_1,x_2)=(\Sigma_{11} x_1+\Sigma_{12}x_2,\Sigma_{12}^\top x_1,\Sigma_{22}x_2)$, is known as the covariance of the random vector $(X_1,X_2)$ in $X_1 \oplus X_2$. The above definition of extends easily to random vectors $(X_1,\ldots,X_m)$ with values in $X_1 \oplus \cdots \oplus X_m$. 

\subsection{Orthonormal frame along a curve $\gamma$}
\label{app:ortho_frame}
Denote by $\mathfrak X(\gamma)$ the set of vector fields along 
 a curve $\gamma:[0,1] \to M$. The intrinsic description requires a curve $\gamma:[0,1] \to M$ and an orthonormal frame $E^\gamma=(e_1,\ldots,e_d)$ with $e_i \in \mathfrak X(\gamma), i=1,\ldots,d$ such that $E^\gamma(\gamma(t))=(e_1(\gamma(t)),\ldots,e_d(\gamma(t)))$ is an orthonormal basis of $T_{\gamma(t)}M$ for every $t$. Simplify notation using $E^\gamma(t)=(e_1(t),\ldots,e_d(t))$ to denote the frame, and note that $E^\gamma$ is obtained by choosing a basis $(v_1,\ldots,v_d)$ for $T_{\gamma(0)}M$, and setting $E^\gamma(t):=(P^\gamma_{t\leftarrow 0}v_1,\ldots,P^\gamma_{t\leftarrow 0}v_d)$ to be a parallel frame with $e_i(t)=P^\gamma_{t\leftarrow 0}v_i$.  Then every vector field $V \in \mathfrak X(\gamma)$ may be expressed as $V(t)=\sum_{i=1}^d V^i(t)e_i(t)$, where $t \mapsto V^i(t) \in \mathbb R$ are component functions. 

If instead we start with an orthonormal basis $U=(u_1,\ldots,u_d)$ of the tangent space $T_bM$ of a point $b \in M$, the orthonormal frame $E^\gamma$ is obtained by first choosing $v_i=P^c_{1\leftarrow 0}u_i$, where $P^c_{t \leftarrow s}:T_{c(s)}M \to T_{c(t)}M$ is the parallel transport along the (unique) geodesic $c:[0,1]\to M$ with $c(0)=b$ and $c(1)=\gamma(0)$, so that
\[
E^\gamma(t)=(P^\gamma_{t\leftarrow 0}P^c_{1\leftarrow 0}u_1,\ldots,P^\gamma_{t\leftarrow 0}P^c_{1\leftarrow 0}u_d), \quad t \in [0,1].
\]
In other words, the vector fields $e_i:=P^\gamma_{t\leftarrow 0}P^c_{1\leftarrow 0}u_i,i=1,\ldots,d$ that constitute $E^\gamma$ are parallel vector fields in $\mathfrak X(\gamma)$ along $\gamma$ determined by $U$. Note that $E^\gamma(0)$ depends on the curve $\gamma$ only through $\gamma(0)$ via the geodesic $c$ connecting $b$ to $\gamma(0)$. Thus for each $t$, $E^\gamma(t): T_b M \to T_{\gamma(t)}M$ is a linear isomorphism engendered by its definition using parallel transport maps on the basis $U$ of $T_bM$.


\subsection{$C^k$ curves in $M$}
\label{app:c_k curves}
Given a chart $(U,\psi)$ with open $U \subset M$ and smooth $\psi:U \to V$, where $V$ is an open subset of $\mathbb R^d$, the coordinate representative of a curve $c:[0,1] \to M$ is the Euclidean curve $\tilde c=\psi \circ c$. Then $c$ is a $C^1$ curve if $\tilde c$ is $C^k$ with respect to the norm $\|\tilde c\|_{C^k}:=\sum_{j=0}^k \sup_t \|D^j\tilde c(t)\|$, where $D^j$ is the $j$th derivative operator with $D^0$ equalling the identity. Then, a sequence of curves in $M$ converge in the weak $C^k$ topology to a limit if their coordinate representatives converge to the corresponding representative for the limit with respect to the $C^k$ norm \citep[Chapter 2][]{hirsch2012differential}. 
\section{Numerical implementation}

\subsection{Computations with discrete representations of curves} \label{sec:computations:for:rolling:etc:in:discretised:time}

Numerical implementation necessarily entails discrete representations of continuous curves; 
for example, the base curve $t \rightarrow \gamma(t)$ that determines rolling is stored as
$\Gamma = \{\gamma(t_1), \ldots, \gamma(t_m)\}$. This discrete representation, $\Gamma$, can be regarded as a piecewise geodesic approximation of $\gamma$ that coincides with $\gamma$ at $t_1, \ldots, t_m$ and that can be made arbitrarily accurate by making the time discretization arbitrarily fine.  Define $\tilde \gamma$ as a piecewise geodesic curve with geodesic segments connecting consecutive elements of $\Gamma$, to approximate $\gamma$ in computations. Then, for example, parallel transport along the discretized curve, $\Gamma$, is
\[
P^{\tilde\gamma}_{t \leftarrow t_0} =
P^{\tilde\gamma}_{t \leftarrow t_{j}} \cdots P^{\tilde\gamma}_{t_2 \leftarrow t_1} P^{\tilde \gamma}_{t_1 \leftarrow t_0},
\]
here for
$t$ satisfying $t_{j} \leq t \leq t_{j+1}$; in other words, the parallel transport is a composition of parallel transports each along a geodesic, enabling simple computation. 
To compute the discretized unrolling, $\Gamma^\downarrow = \{\gamma^\downarrow(t_1), \ldots, \gamma^\downarrow(t_m)\}$, of 
$\Gamma$, from \eqref{eqn:unrolling:defn} and with $\tilde\gamma$ replacing $\gamma$, 
\begin{align}
   \gamma^\downarrow(t) &= \gamma(t_0) + \left(\int_{t_0}^{t_1} + \cdots + \int_{t_{j}}^t \right) P^{\tilde\gamma}_{t_0 \leftarrow s} \dot{\tilde \gamma}(s)  \, \mathrm{d}s \nonumber
   \\ & = \gamma^\downarrow(t_{j}) + \int_{t_j}^t
   P^{\tilde\gamma}_{t_0 \leftarrow t_j} P^{\tilde\gamma}_{t_{j} \leftarrow s} 
   \dot{\tilde \gamma}(s) \mathrm{d} s  = 
   \gamma^\downarrow(t_{j}) + P^{\tilde\gamma}_{t_0 \leftarrow t_j} \exp^{-1}_{\gamma(t_j)} {\tilde\gamma}(t),  
   \label{eqn:unrolling:computational:expression}
\end{align}
using recursion, 
and using that along the geodesic segment between ${\tilde\gamma}(t_j)$ and ${\tilde\gamma}(t)$,
$
\int_{t_j}^t
 P^{\tilde \gamma}_{t_{j} \leftarrow s} 
   \dot{\tilde \gamma}(s) \mathrm{d} s = t \dot{\tilde \gamma}(t_j) = 
   \exp^{-1}_{\gamma(t_j)} \tilde{\gamma}(t).
$
Hence, via equation \eqref{eqn:unrolling:computational:expression}, 
$\Gamma^\downarrow = \{\gamma^\downarrow(t_1), \ldots, \gamma^\downarrow(t_m)\}$ can be determined from $\Gamma = \{\gamma(t_1), \ldots, \gamma(t_m)\}$. Similarly, via the rearrangement of  \eqref{eqn:unrolling:computational:expression} as
\begin{equation}
\gamma(t) = \exp_{\gamma(t_j)} \left[ P^{\tilde \gamma}_{t_j \leftarrow t_0} \left\{ \gamma^\downarrow(t) - \gamma^\downarrow(t_j)\right\} \right],
\label{eqn:rolling:computational:expression}
\end{equation}
thus $\Gamma$ can be determined from
$\Gamma^\downarrow$.

\subsection{Expressions for manifold $\mathbb S^d$}

The embedded representation of the manifold is 
$\mathbb S^d=\{x \in \mathbb R^{q} : x^\top x = 1
\}$ for $q = d+1$.  Suppose that $p, p', x \in M$; $y \in T_pM$; and $c$ is a geodesic curve connecting $p$ to $p' \in M \backslash \mathcal C(p)$ such that $c(0) = p$ and $c(1) = p'$. Then \citep[for example]{absil2009optimization},
\begin{align*}
    \exp_p^{-1}(x) & = u \{x - p \cos(u)\}/\sin(u) \in T_pM, \quad x \notin \mathcal C(p); \\
    \exp_p(y) & = p \cos(\|y \|) + (y / \| y \|) \sin(y/ \| y \|) \in M; \\
    P_{1 \leftarrow 0}^c y & = 
    \left[ I_d + \{\cos(v) - 1\} w w^\top -
    \sin(v) p w^\top \right] y \in T_{p'}M;
\end{align*}
for $u = \cos^{-1}(p^\top x)$, $v = \cos^{-1}(p^\top p')$, $w = \{p' - p  \cos(v)\}/\sin(v)$.





\subsection{Expressions for manifold $\operatorname{Sym}_{> 0}(d)$.}

\label{sec:appdx:expressions:for:SPD}

Let
{$M = \operatorname{Sym}_{> 0}(r) =\{X \in \mathbb R^{r \times r}: X= X^\top, y^\top X y>0, y \in \mathbb R^r\}$ be the positive cone of dimension $d=r(r+1)/2$ within the set of $r$-dimensional symmetric matrices.}
Suppose that $P, P', X \in M$; $Y \in T_pM$; and $c$ is a geodesic curve connecting $P$ to $P'$ such that $c(0) = P$ and $c(1) = P'$.
The distance from the affine-invariant metric \citep[for example]{pennec2019riemannian} is
\[
d(P,X)= \|\log(P^{-\frac{1}{2}} X P^{-\frac{1}{2}})\|,
\]
and 
\begin{align*}
    \exp_P^{-1}(X) & = P^{1/2} \log\left( P^{-1/2} X P^{-1/2}\right) P^{1/2} \in T_PM;\\
    \exp_P(Y) & = P^{1/2} \exp\left( P^{-1/2} Y P^{-1/2}\right) P^{1/2} \in M;\\
    P_{1 \leftarrow 0}^c Y & = 
    E Y E^\top \in T_{P'}M,
\end{align*}
with $E = (P' P^{-1})^{1/2}$ \citep{sra2015conic},
and $\log$ and $\exp$ are the matrix logarithm and exponential.

\section{Supporting lemmas}
The results in the section will be used in the proofs of the main results in Appendix D. We separately record results pertaining to the geometry from those that relate to the statistical model. Recall that $M$ is a complete, connected $d$-dimensional Riemannian manifold. 
\subsection{Geometric lemmas}
\begin{lemma}
\label{lemma:Lipschitz}
Let $A \subset M$ be compact with bounded geometry satisfying assumption A4. 
There exists a set $\Omega \subset A \times A$ such that $(p,q) \mapsto \exp^{-1}_p(q)$ is well-defined within $\Omega$, and is smooth within the closure of $\Omega$ with derivatives uniformly bounded by constants that depend only on the local geometry of $\Omega$.
\end{lemma}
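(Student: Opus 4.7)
The plan is to choose $\Omega$ as an open neighbourhood of the diagonal in $A \times A$ on which $\rho(p,q)$ stays strictly below both the injectivity radius of $A$ and the conjugate radius controlled by the curvature bound, then invoke compactness together with standard Jacobi-field estimates to obtain the uniform bounds. First I would set $r_0 := \tfrac{1}{2}\min\{i(A), \pi/\sqrt{\kappa}\}$, which is strictly positive because $A$ is compact with bounded geometry and hence has positive injectivity radius, and define $\Omega := \{(p,q) \in A \times A : \rho(p,q) < r_0\}$. The closure $\bar\Omega$ is compact, and every $(p,q) \in \bar\Omega$ satisfies $\rho(p,q) \leq r_0 < i(p)$, so $q$ lies outside the cut locus $\mathcal{C}(p)$ and $\exp_p^{-1}(q)$ is well-defined; moreover, $\|\exp_p^{-1}(q)\|_p = \rho(p,q) < \pi/\sqrt{\kappa}$ places this vector strictly inside the conjugate radius, so the differential of $\exp_p$ is invertible there.

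Next I would argue smoothness by viewing $(p,v) \mapsto (p, \exp_p(v))$ as a smooth map from a neighbourhood of the zero section in $TM$ into $M \times M$. Its differential is invertible on the open subset where $\|v\|_p < i(p)$ and $v \notin \mathcal{T}(p)$, so by the inverse function theorem, together with the smooth dependence of the geodesic flow on initial data, its inverse $(p,q) \mapsto (p, \exp_p^{-1}(q))$ is smooth on a neighbourhood of $\bar\Omega$. In particular, the map $(p,q) \mapsto \exp_p^{-1}(q)$ extends smoothly to $\bar\Omega$.

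For the uniform derivative bounds I would proceed in two stages. The qualitative bound is immediate: $\bar\Omega$ is compact and the partial derivatives of $(p,q)\mapsto\exp_p^{-1}(q)$ of all orders are continuous on it, hence bounded. To make the bounds quantitative and show they depend only on the local geometry, I would express the first derivatives in terms of Jacobi fields along the unique minimal geodesic from $p$ to $q$, and then invoke the Rauch comparison theorem to control these fields by explicit expressions involving $\kappa$ and $\rho(p,q) \leq r_0$. Higher derivatives are handled analogously by differentiating the Jacobi equation, yielding bounds in terms of $\kappa$ and its covariant derivatives.

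The main obstacle is the last step: although existence of uniform bounds follows for free from continuity and compactness, extracting Jacobi-field-based bounds that depend only on the curvature bound and the radius $r_0$, rather than on the particular set $A$, requires some careful bookkeeping with the Rauch and Hessian comparison machinery. This geometric accounting, rather than any new idea, is where most of the technical work sits.
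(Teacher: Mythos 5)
Your proposal is correct and follows essentially the same route as the paper: positivity of the injectivity radius from compactness, the inverse function theorem applied to $(p,v) \mapsto (p,\exp_p(v))$ to get smoothness of the inverse, compactness of the closure of $\Omega$ for qualitative boundedness, and Jacobi-field/Rauch comparison estimates to make the constants depend only on $\kappa$ and $r_0$. The only cosmetic difference is that you take $\Omega$ to be a uniform neighbourhood of the whole diagonal of $A\times A$, whereas the paper builds $\Omega$ as the image of a set over a single geodesically convex ball $B_{p^*}(r_0)$; both satisfy the lemma as stated.
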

\begin{proof}
The function $p \mapsto \text{inj}(p)$ is continuous on $M$ \citep[e.g.][Theorem 5.7]{petersen2016symmetric}, and attains its minimum within the compact $A$. Since $M$ is complete and $A \subset M$, $\text{inj}(A)>0$ \citep[e.g.][Lemma 6.16]{lee2018introduction}, and it is possible to choose an $r_0>0$ with
\[
r_0 < \min\left\{\frac{\pi}{2 \sqrt{\kappa}},\frac{1}{2}\text{inj}(A)\right\}
\]
such that the ball $B_{p^*}(r_0)$ of radius $r_0$ around some point $p^* \in A$ is geodesically convex \footnote{For any $p,q$ within $B_{p^*}(r_0)$ there exists a unique minimizing geodesic $\gamma_{pq}$ contained within $B_{p^*}(r_0)$ connecting $p$ and $q$.}. Let $S:=\{(p,v): p \in B_{p^*}(r_0), \|v\|_{p*} < r_0 \} \subset TM$, and define the map 
\[
\mathcal E:S \to M \times M, \quad (p,v) \mapsto \mathcal E(p,v):=(p,\exp_p (v)),
\]
with $\mathcal E(S) \subseteq B_{p^*}(r_0) \times B_{p^*}(r_0)$. Note that $\exp_p$ is a smooth diffeomorphism within $B_p(r_0)$, and thus if $\mathcal E(p,v)=(p',q')$ and $\mathcal E(p',v')=(p',q')$ with $\|v\|_p, \|v'\|_{p'} < r_0$ we must have $p=p'$ and $v=v'$; $\mathcal E$ is thus injective and its differential is invertible everywhere on $B$. By the inverse function theorem, $\mathcal E$ is a diffeomorphism onto its image 
$\Omega:=\mathcal E(S)$. This ensures that the $(p,q) \mapsto \exp^{-1}_p(q)$ is well defined with $\Omega$. 

The set $\Omega$ has a compact closure within $M \times M$ since it can be covered by a union of closed balls around points in the closure of $B_{p^*}(r_0)$, which is compact. Thus within the closure of $\Omega$, $(p,q) \mapsto \exp^{-1}_p(q)$ has uniformly bounded derivatives of all orders in $p$ and $q$. Using normal coordinates at $p$ it is easily seen that derivatives in $q$ depend only on the upper bound $\kappa$ on sectional curvatures in $A$ and the radius $r_0$. The first derivative in $p$ is the covariant derivative of the vector field $p \mapsto \exp^{-1}_p(\cdot)$, given by the value of a particular Jacobi field which is uniformly bounded by constants that depend only on $\kappa$ and $r_0$; see, for example, Appendix A in \cite{karcher1977riemannian} for details, and also Lemma 1 in \cite{reimherr2021differential}. 
\end{proof}

\begin{lemma}
 \label{lem:tubular}
  There exists an $r_0>0$ such that within the tubular neighbourhood $\mathbb T_{\gamma}(r_0)$ of a continuous curve $\gamma$ the map $(p,q) \mapsto \exp^{-1}_p(q)$ is well-defined with bounded derivatives. 
 \end{lemma}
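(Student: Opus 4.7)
The plan is to exploit compactness of $\gamma([0,1])$ together with control of the injectivity radius to reduce the claim to Lemma \ref{lemma:Lipschitz} via a finite covering argument. Since $\gamma:[0,1]\to M$ is continuous and $[0,1]$ is compact, the image $\gamma([0,1])$ is a compact subset of $M$. The function $p\mapsto \mathrm{inj}(p)$ is continuous on the complete manifold $M$ (Theorem 5.7 of \citet{petersen2016symmetric}) and strictly positive, so $\iota:=\inf_{t\in[0,1]}\mathrm{inj}(\gamma(t))>0$. Any $r_0$ satisfying $r_0<\min\{\pi/(2\sqrt{\kappa}),\iota/2\}$ will do; this is exactly the constraint appearing in the definition of $\mathbb T_\gamma(r_0)$, and ensures that for every $t\in[0,1]$ the ball $B_{\gamma(t)}(2r_0)$ is geodesically convex and disjoint from the cut locus of its center.

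Next I would verify that $\mathbb T_\gamma(r_0)$ is compact. Picking a parallel orthonormal frame along $\gamma$ (Appendix \ref{app:ortho_frame}) trivializes the tangent bundle along $\gamma$, so $\mathbb T_\gamma(r_0)$ is the image under the continuous map $(t,v)\mapsto \exp_{\gamma(t)}(v)$ of the compact set $[0,1]\times\{v\in\mathbb R^d:\|v\|\le r_0\}$, hence compact. By assumption A2, sectional curvatures on $\mathbb T_\gamma(r_0)$ are bounded in absolute value by $\kappa$, so $\mathbb T_\gamma(r_0)$ is a compact set of bounded geometry to which Lemma \ref{lemma:Lipschitz} applies. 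That lemma produces a set $\Omega\subset \mathbb T_\gamma(r_0)\times\mathbb T_\gamma(r_0)$ on which $(p,q)\mapsto \exp^{-1}_p(q)$ is well-defined and smooth, with all derivatives bounded by constants depending only on $\kappa$ and $r_0$. A finite subcover $\{B_{\gamma(t_i)}(r_0)\}_{i=1}^N$ of $\gamma([0,1])$ (obtained by compactness) shows that $\Omega$ contains a neighbourhood of the diagonal in $\mathbb T_\gamma(r_0)\times\mathbb T_\gamma(r_0)$: any $p,q\in \mathbb T_\gamma(r_0)$ with $d(p,q)<r_0$ lie in a common $B_{\gamma(t_i)}(2r_0)$, hence in the domain of $\exp^{-1}$, with the derivative bound inherited uniformly from the finite cover.

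The main obstacle is interpretative rather than technical: a naive reading of ``$(p,q)\mapsto \exp^{-1}_p(q)$ well-defined on $\mathbb T_\gamma(r_0)$'' is too strong, because for a long curve $\gamma$ two points within the thin tube can still lie in each other's cut locus. The statement should be understood as ``well-defined on the natural domain $\Omega$ obtained from Lemma \ref{lemma:Lipschitz}, which contains a neighbourhood of the diagonal in $\mathbb T_\gamma(r_0)\times \mathbb T_\gamma(r_0)$''—this is exactly what is required in the subsequent applications, where pairs $(p,q)=(\hat\gamma_{\mathrm{Fre}}(t),x_i(t))$ are close by assumption A4 and the high-probability tube-containment established in Theorem \ref{thm:C1_convergence}(b). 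Bounding the derivatives uniformly is then automatic from Lemma \ref{lemma:Lipschitz}, since its constants depend only on $\kappa$ and $r_0$ and not on the base point of the local chart used in the covering.
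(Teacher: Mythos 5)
Your proof is correct and follows essentially the same route as the paper's: establish compactness of $\mathbb T_\gamma(r_0)$ and then invoke Lemma \ref{lemma:Lipschitz}, with the choice of $r_0$ dictated by the injectivity radius and curvature bound exactly as in that lemma's proof. Your compactness argument (continuous image of $[0,1]\times\{\|v\|\le r_0\}$ under the exponential map in a parallel frame) is a minor variant of the paper's closed-and-bounded-plus-completeness argument, and your closing remark that well-definedness should be read as holding on the neighbourhood $\Omega$ of the diagonal rather than on all of $\mathbb T_\gamma(r_0)\times\mathbb T_\gamma(r_0)$ is a fair and accurate reading of how the lemma is actually used downstream.
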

\begin{proof}
Due to Lemma \ref{lemma:Lipschitz}, it suffices to prove that $\mathbb T_{\gamma}(r)$ for any finite $r>0$ is a compact subset of $M$. Then, $r_0$ may be chosen as in the proof of Lemma \ref{lemma:Lipschitz}. The set $T_{\gamma}(r_0)$ is closed and bounded, and is compact since $M$ is complete. 
\end{proof}
The curvature tensor $R: T_pM\times T_pM\times T_pM\to T_pM$ is endowed with a pointwise operator norm bound
\begin{equation}
\label{eq:R_norm}
\|R\|_{\text{C}}:=\sup_{\substack{u,v\in T_pM\\ \|u\|_p=\|v\|_p=1}} \| R(u,v) \|_{\text{op}}.
\end{equation}


\begin{lemma}[Holonomy bound for triangles]
\label{lemma:holonomy}
Let $p,q,r$ be distinct points in $S \subset M$ with $|\sec{(S)}| \leq \kappa$ for $\kappa \geq 0$. Define the operator $\Xi:=P^{\sigma_{qr}}_{0 \to 1}P^{\sigma_{pq}}_{0 \to 1}-P^{\sigma_{pr}}_{0 \to 1}:T_pM \to T_rM$, where $\sigma_{pq}:[0,1] \to M$ is a curve from $\sigma_{pq}(0)=p$ to $\sigma_{pq}(1)=q$, and similarly for the other two curves. Then, 
\[
\|\Xi\|_{\text{op}} \leq C \kappa \rho(r,p)\rho(q,r)+o(A(\Delta_{pqr})),
\]
where the constant $C$ depends on dimension $d$, and $A(\Delta_{prq})$ is the area of the triangle $\Delta_{prq}$ formed by $p,q$ and $r$. 
\end{lemma}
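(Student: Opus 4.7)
The plan is to reinterpret $\Xi$ as a rescaled loop holonomy around the geodesic triangle $\Delta_{pqr}$, and then invoke the classical expansion expressing loop holonomy in terms of the integral of the Riemann curvature tensor over an infilling surface.

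First I would factor
\[
\Xi = P^{\sigma_{pr}}_{0 \to 1}\,(H - \operatorname{id}_{T_pM}),\qquad H := \bigl(P^{\sigma_{pr}}_{0\to 1}\bigr)^{-1}\,P^{\sigma_{qr}}_{0\to 1}\,P^{\sigma_{pq}}_{0 \to 1},
\]
so that $H:T_pM\to T_pM$ is the linear holonomy around the closed piecewise-geodesic loop $\sigma_{pq}\star\sigma_{qr}\star\sigma_{pr}^{-1}$ based at $p$. Since parallel transport is a linear isometry, $\|\Xi\|_{\text{op}}=\|H-\operatorname{id}\|_{\text{op}}$, and the problem reduces to bounding this holonomy defect.

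Next I would construct a smooth filling $\Phi:D\to M$ of $\Delta_{pqr}$, where $D\subset\mathbb{R}^2$ is a triangular parameter domain whose three edges map onto $\sigma_{pq},\sigma_{qr},\sigma_{rp}$. A convenient choice is to let $\Phi(u,v)$ trace the geodesic emanating from $r$ toward a point on $\sigma_{pq}$, parametrized so that $\Phi$ is smooth on $D$; this is well defined within a geodesically convex neighbourhood of $\Delta_{pqr}$, which exists under $|\sec(S)|\le\kappa$ at a sufficiently small scale. Differentiating $H_t$, the holonomy around the nested subloops cut out by $\Phi$, yields the classical expansion
\[
H-\operatorname{id}_{T_pM} = -\int_D \widetilde R\bigl(\partial_u\Phi,\partial_v\Phi\bigr)\,du\,dv + o\!\left(A(\Phi(D))\right),
\]
where $\widetilde R(X,Y)$ denotes the curvature endomorphism transported to $T_pM$ along intermediate curves in the filling. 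Taking operator norms and using $\|\widetilde R(X,Y)\|_{\text{op}}\le\|R\|_{\text{C}}\|X\|\|Y\|$ pointwise, together with the standard estimate $\|R\|_{\text{C}}\le c_d\kappa$ for a dimension-dependent $c_d$ (sectional curvature bounds all components of $R$ up to a combinatorial factor), gives
\[
\|H-\operatorname{id}\|_{\text{op}} \le c_d\,\kappa\,A(\Phi(D)) + o(A(\Phi(D))).
\]
The geodesic triangle's area is bounded via the two sides meeting at $r$, $A(\Delta_{pqr})\le\tfrac{1}{2}\rho(r,p)\rho(q,r)$, yielding the claimed inequality with $C=c_d/2$.

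The main obstacle will be a rigorous justification of the remainder $o(A(\Delta_{pqr}))$. This requires the filling $\Phi$ to be constructed so that its derivatives up to second order are uniformly bounded in terms of $\kappa$ and $\operatorname{diam}(\Delta_{pqr})$, which follows from Rauch-type Jacobi field comparisons under the curvature bound. The delicate point is ensuring that sub-leading corrections to the holonomy expansion are of higher order than the area of the triangle and not merely of higher order than its diameter; this forces one to carry out a genuine two-parameter variation and carefully track the small-triangle scaling $\rho(r,p)\rho(q,r)=O(\operatorname{diam}^2)$, rather than just a single-parameter contraction.
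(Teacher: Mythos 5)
Your proposal is correct and follows essentially the same route as the paper's proof: both reduce $\Xi$ to a loop holonomy defect via the isometry of parallel transport, fill the triangle with the one-parameter family of geodesics from $r$ to points of $\sigma_{pq}$, apply the curvature-integral expansion of holonomy with an $o(A(\Delta_{pqr}))$ remainder, and then bound $\|R\|$ by a dimension-dependent multiple of $\kappa$ and the area by the product of two side lengths. The only difference is cosmetic (you base the loop at $p$ rather than $r$), and your closing remarks on controlling the remainder correspond to the step the paper discharges by citing the standard holonomy expansion.
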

\begin{proof}
Observe that 
\[
\|\Xi\|_{\text{op}}=\|P^{\sigma_{qr}}_{0 \to 1}P^{\sigma_{pq}}_{0 \to 1}-P^{\sigma_{pr}}_{0 \to 1}\|_{\text{op}}=\|P^{\sigma_{qr}}_{0 \to 1}P^{\sigma_{pq}}_{0 \to 1}P^{\sigma_{pr}}_{1 \leftarrow 0}-I\|_{\text{op}},
\]
since $P^{\sigma_{pr}}_{1 \leftarrow 0}$ is an isometry, where $P^{\sigma_{pr}}_{1 \leftarrow 0}=(P^{\sigma_{pr}}_{0 \to 1})^{-1}$. The operator $\Xi$ is the holonomy operator for parallel transport along the triangle $\Delta_{pqr}$. 
For the family of curves
$\gamma : [0,1]\times[0,1] \to M$ where $\gamma(s,\tau)$ is the point at time $\tau$ along the curve $\sigma_{pq}(s)$ to $r$ so that for each fixed \(s\),  \(\gamma(s,\cdot)\) is the curve from \(\gamma(s,0)=\sigma_{pq}(s)\) to \(\gamma(s,1)=r\); note that $\gamma(0,\cdot)=\sigma_{pr}$  and $\gamma(1,\cdot)=\sigma_{qr}$. Then, for any unit vector $w \in T_pM$
\[
(P^{\sigma_{qr}}_{0 \to 1}P^{\sigma_{pq}}_{0 \to 1}P^{\sigma_{pr}}_{1 \leftarrow 0}-I) w =\left(\int_{\Delta_{pqr}} R(\partial_s \Delta_{pqr}, \partial_\tau \Delta_{pqr}) \thickspace \text{d}s\text{d}\tau \right)w+w \delta_{pqr},;
\]
where $\delta_{pqr}=o(A(\Delta_{pqr}))$; see, for example, Theorem 7.11 in \cite{lee2018introduction}. It is known that $\|R\|_{\text{C}}$ is upper bounded by a dimension-dependent constant times supremum of the absolute sectional curvatures \citep{petersen2016symmetric}. Thus, taking norms on both sides, and noting that the area of a triangle is upper bounded by the product of lengths of any two of its sides, the proof follows. 
\end{proof}
\subsection{Probabilistic lemmas}
We first collect some results on the Fr\'echet functional $M \times [0,1] \ni (p,t)\mapsto F(p,t):=\mathbb E(\rho^2(p,x(t)))$ from \eqref{Eqn:FM_definition}, its covariant derivative $\nabla_p F(p,t)$, its Hessian $\nabla^2_p F(p,t)$, covariant time derivatives $\nabla_t F(p,t)$ and $\nabla^2_t F(p,t)$, and mixed derivatives $\nabla_t \nabla_p F(p,t)$ and $\nabla_p \nabla_t F(p,t)$. 

We will use the following relations related to the local isometry property of the rolling map. Let $\gamma=m^\uparrow$ and $e(t):=z(t)-m(t)$, a mean-zero Gaussian process in $\mathbb R^d$. The vector field $V^{\gamma}$ along $\gamma$ in Proposition \ref{prop:vector_field} satisfies
$\|V^{\gamma}(t)\|_{\gamma (t)}=\|y_b(t)-m_b(t)\|_b=\|e(t)\|$ for each $t$. similarly,  $\|\nabla_t V^{\gamma}(t)\|_{\gamma (t)}=\|\dot e(t)\|$, and $|\dot \gamma(t)\|_{\gamma(t)}=\|m(t)\|$.

\begin{lemma}
\label{lemma:FrechetFunction_deriv}
Let $x \sim \mathcal{RGP}(m,k;b,U)$ satisfying assumption A2-A4. Then, on $S$,
\begin{enumerate}
    \item [(a)] $\nabla_p F(p,t)=\mathbb E \left(\nabla_p \rho^2(p,x(t))\right)$ and $\nabla^2_p F(p,t)=\mathbb E \left(\nabla^2_p \rho^2(p,x(t))\right)$;
    \item[(b)] $\nabla_tF(p,t)=\mathbb E \left(\nabla_t \rho^2(p,x(t))\right)$ and $\nabla^2_tF(p,t)=\mathbb E \left(\nabla^2_t \rho^2(p,x(t))\right)$;
    \item[(c)]$\nabla_t \nabla_p F(p,t)=\mathbb E \left(\nabla_t \nabla_p\rho^2(p,x(t))\right)$ and $\nabla_p\nabla_t F(p,t)=\mathbb E \left(\nabla_p\nabla_t \rho^2(p,x(t))\right)$.
\end{enumerate}
\end{lemma}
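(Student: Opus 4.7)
The proof is a standard Leibniz-rule argument: for each identity we justify interchanging a covariant derivative with the expectation defining $F(p,t)=\mathbb E[\rho^2(p,x(t))]$ via dominated convergence applied to difference quotients. In each case the three items to verify are (i) $\mathbb P$-a.s.\ pointwise differentiability of the integrand, (ii) a dominating integrable function on a neighbourhood of each $(p_0,t_0)\in S\times[0,1]$, and (iii) for part (c), symmetry of mixed partials. For (i), note that $V^\gamma(t)$ is a non-degenerate Gaussian vector in $T_{\gamma(t)}M$ by Proposition~\ref{prop:vector_field}, so the law of $x(t)=\exp_{\gamma(t)}(V^\gamma(t))$ is absolutely continuous with respect to Riemannian volume on $M$. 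Since $\mathcal{C}(p)$ has Hausdorff dimension $d-1$, $\mathbb P(x(t)\in\mathcal C(p))=0$ for every fixed $p\in M$, so the smooth formulae $\nabla_p\rho^2(p,q)=-2\exp^{-1}_p(q)$ and its Hessian apply at $q=x(t)$ a.s. Assumption A3 combined with smoothness of $\exp$ gives $x\in C^2([0,1],M)$ a.s., so the time derivatives exist pointwise.

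For the first-order bounds in (a) and (b), use $\|\nabla_p\rho^2(p,q)\|_p = 2\rho(p,q)$, which on $B(p_0,\delta)\times M$ is dominated by $2(\rho(p_0,x(t))+\delta)$; this has finite expectation by A1. The mean-value theorem then bounds the first-order difference quotient by an integrable function, yielding the gradient identity by dominated convergence. For the time derivative, the chain rule gives
\begin{equation*}
   \nabla_t\rho^2(p,x(t))=-2\langle\exp^{-1}_{x(t)}(p),\dot x(t)\rangle_{x(t)},
\end{equation*}
whose modulus is bounded by $2\rho(p,x(t))\|\dot x(t)\|_{x(t)}$. Under A2–A4 the $C^2$ regularity and bounded-covariance structure of $z$ transfer to $x$ via the smoothness of $\exp$ and parallel transport, giving finite moments of $\|\dot x\|$ and $\|\ddot x\|$ uniform in $t\in[0,1]$. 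Cauchy--Schwarz then renders the bounds integrable; analogous expansions dominate $\nabla_t^2\rho^2$ by a sum of products of $\rho(p,x(t))$ with $\|\dot x(t)\|^2$ and $\|\ddot x(t)\|$.

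The principal obstacle is the second-order bound in (a), since $\nabla_p^2\rho^2(p,q)$ is generically undefined on $\mathcal C(p)$ and can blow up as $q$ approaches it. The resolution uses Lemma~\ref{lemma:Lipschitz}, which under A2 provides a set $\Omega$ on which $(p,q)\mapsto\exp^{-1}_p(q)$ is a diffeomorphism with derivatives of every order uniformly bounded by constants depending only on $\kappa$ and the chosen radius; this yields a uniform bound on $\|\nabla_p^2\rho^2(p,q)\|_{\mathrm{op}}$ for $(p,q)\in\Omega$. Off $\Omega$ but off $\mathcal C(p)$, Jacobi-field comparison under A2 gives a polynomial-in-$\rho(p,q)$ bound on the Hessian. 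We split the difference-quotient expectation into the contribution from $\{x(t):(p_0,x(t))\in\Omega\}$, where the bounded Hessian dominates, and its complement, which is a tail event whose probability decays exponentially in its radius because $\|V^\gamma(t)\|$ is Gaussian with bounded variance by A4. The polynomial Hessian bound is integrable against this Gaussian tail, and dominated convergence applies.

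The mixed-derivative identities in (c) then follow by iterating the arguments of (a) and (b): first differentiate in $t$, then in $p$ (or the reverse order), and invoke Schwarz's theorem pointwise. The theorem applies because $(p,t)\mapsto\rho^2(p,x(t))$ is $C^2$ off the $\mathbb P$-null exceptional set $\{(p,t):x(t)\in\mathcal C(p)\}$, so the equality $\nabla_t\nabla_p\rho^2=\nabla_p\nabla_t\rho^2$ holds a.s., and is inherited by the expectation once the interchange of derivatives and integration has been established in each order via the same dominating functions used above.
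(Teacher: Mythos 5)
Your proposal is correct and follows essentially the same route as the paper: almost-sure smoothness of $p\mapsto\rho^2(p,x(t))$ off the measure-zero cut locus, domination of each difference quotient by an integrable random variable, and dominated convergence, with the Hessian controlled via comparison theorems under A2 and integrability supplied by the Gaussian tails of $\|V^\gamma(t)\|$ under A3--A4. The only substantive difference is cosmetic: where you split the second-order bound into the region $\Omega$ of Lemma~\ref{lemma:Lipschitz} and a Gaussian-tail complement, the paper applies a single global Hessian-comparison bound $\|\nabla_p^2\rho^2(p,x(t))\|_{\text{op}}\leq 2+2\rho\,h(\sqrt{\kappa}\rho)$, and where you assert finite moments of $\|\dot x\|$ and $\|\ddot x\|$, the paper substantiates this with an explicit Jacobi-field decomposition yielding $\|\dot x(t)\|_{x(t)}\leq e^{\kappa\|e(t)\|}(\sup_t\|m(t)\|+\|\dot e(t)\|)$ and then uses sub-Gaussianity of $\|e(t)\|$ — a detail worth spelling out but one your outline accommodates.
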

\begin{remark}
Assertions of Lemma \ref{lemma:FrechetFunction_deriv} hold more generally for any second-order process $z$, and also when $\gamma=m^\uparrow$ does not equal the Fr\'echet mean curve $\fre$. 
\end{remark}
\begin{proof}
For each $t$, outside of the cut locus $\mathcal C(x(t))$ of $x(t)$ the map $p \mapsto \rho^2(p,x(t))$ is smooth with well-defined derivatives of all orders. The random variable $x(t)$ has a distribution that is the pushforward under $\exp_{\gamma(t)}:T_{\gamma(t)}M \to M$ of a $d$-dimensional Gaussian, and is absolutely continuous with respect to the volume measure on $M$. Thus, $\mathbb P(x(t) \in \mathcal C(p))=0$, since $\mathcal C(p)$ for every $p$ is of dimension $d-1$. The $p \mapsto \rho^2(p,x(t))$ thus is smooth a.e. for fixed $t$, and all derivatives in the parts (a)-(c) are well-defined random variables. It suffices thus to show that each of the derivatives is dominated by an integrable random variable, and the proof follows by an application of the Dominated Convergence Theorem to the difference quotient that defines the derivatives. 

We detail the argument for $\nabla_p \rho^2(p,x(t))$. Note that $\nabla_p \rho^2(p,x(t))=-2\exp^{-1}_p(x(t))$ when $x(t) \notin \mathcal C(x(t))$ with $\|\nabla_p \rho^2(p,x(t))\|_p=2\rho(p,x(t))$.  Then, $\rho(p,x(t))\leq \rho(p,\fre(t))+\rho(\fre(t),x(t))$. Thus, with $\|V^{\gamma}(t)\|_{\gamma (t)}=\|e(t)\|$,
\begin{equation}
\label{eq:DCT}
\|\nabla_p \rho^2(p,x(t))\|_p \leq 2(C_1+\|e(t)\|),
\end{equation}
since $\sup_t\rho(p,\fre(t)) \leq C_1< \infty$ as $t \mapsto \rho(x(t),p)$ is continuous and attains a maximum on $[0,1]$. Note that $e(t)$ has a Gaussian tail, and the right hand side is integrable. Then, for $v \in T_pM$, the limiting difference quotient satisfies
\[
\frac{\text{d}}{\text{d}\epsilon}\Big |_{\epsilon=0} \mathbb E(\rho^2(\exp_{p}(\epsilon v),x(t)))=\mathbb E\left(\frac{\text{d}}{\text{d}\epsilon}\Big |_{\epsilon=0}\rho^2(\exp_{p}(\epsilon v),x(t))\right),
\]
so that $\nabla_p F(p,t)=\mathbb E \left(\nabla_p \rho^2(p,x(t))\right)$.

In operator notation, $\nabla^2_p \rho^2(p,x(t))=2g+2\rho(p,x(t))H_t(p)\rho(p,x(t))$, where
$g$ is the Riemannian metric tensor and $H_t(p):T_pM \to T_pM$ is the (1,1)-tensor that denotes the Hessian operator of $p \mapsto \rho^2(p,x(t))$. From assumption A4 and from the Hessian comparison theorem \citep[Proposition 11.3][]{lee2018introduction}, we get for any $v \in T_pM$
\[
H_t(p)(v,v) \leq h(\sqrt{\kappa} \rho(p,x(t)))\|v\|_p^2,
\]
where $h:\mathbb R \to \mathbb R$
\[
h(r):=
\begin{cases}
 r\coth(r), & \kappa>0   \\
 1 & \kappa=0\thickspace ,
\end{cases}
\]
from which,
\begin{equation}
\label{eq:HessianComp}
\|\nabla^2_p \rho^2(p,x(t))\|_{\text{op}} \leq 2+2\rho(p,x(t)) h(\sqrt{\kappa} \rho(p,x(t))).
\end{equation}
Since $\rho(p,x(t)) \leq C_1+\|z(t)\|$, $\|\nabla^2_p \rho^2(p,x(t))\|_{\text{op}}$ is dominated by an integrable random variable. This proves part (a). 

In part (b), we focus on $t \mapsto \rho^2(p,x(t))$. For the first covariant time derivative, we have $\nabla_tF(p,t)=\langle\nabla_p F(p,t),P^\sigma_{0 \to 1}\dot x(t)\rangle_p$, where $\sigma:[0,1] \to M$ is the geodesic between $\sigma(0)=x(t)$ and $p$, which exists a.e. . Then, from \eqref{eq:DCT}, we obtain
\[
\nabla_tF(p,t) \leq \|\nabla_pF(p,t)\|_p\|\dot x(t)\|_{x(t)} \leq 2(C_1+\|\dot e(t)\|)\|\dot x(t)\|_{x(t)}. 
\]
The time derivative $\dot x(t)$ can be expressed as the value of a Jacobi field $J_t(s)$ along the geodesic $s\mapsto \exp_{\gamma(t)}(sV^{\gamma}(t))$ evaluated at $s=1$. That is, $\dot x(t)=J_t(1)$, where the Jacobi field $J_t(s)$ satisfies the initial conditions
\[
J_t(0)=u, \quad \nabla_s J_t(0)= v,
\]
where $u=\dot \gamma(t)$ and $v=\nabla_tV^{\gamma}(t)$. Decompose $J_t(s)=J^u_t(s)+J_t^v(s)$, where $J^u_t(s)$ is a Jacobi field satisfying $J_t^u(0)=u$ and $\nabla_s J_t^u(0)=0$, and $J^v_t(s)$ is a Jacobi field satisfying $J_t^v(0)=0$ and $\nabla_s J_t^v(0)=v$. Then, $\|\dot x(t)\|_{x(t)} \leq \|J^u_t(1)\|_{\gamma(t)}+|J^v_t(1)\|_{\gamma(t)}$. From the Jacobi comparison theorem \citep[Theorem 11.9][]{lee2018introduction}, and the local isometric property of the rolling map, we get
\begin{align*}
\|J^u_t(1)\|_{\gamma(t)}+ |J^v_t(1)\|_{\gamma(t)}&\leq \cosh(\kappa \|V^{\gamma}(t)\|_{\gamma(t)}\|\dot \gamma(t)\|_{\gamma(t)}\\
&\quad +\frac{1}{\kappa}\sinh(\kappa \|V^{\gamma}(t)\|_{\gamma(t)}\left\|\nabla_tV^{\gamma}(t)\right\|_{\gamma(t)}\\
&=\cosh(\kappa \|e(t)\|\|m(t)\|+\frac{1}{\kappa}\sinh(\kappa \|e(t)\|)\|\dot e(t)\|,
\end{align*}
since the rolling operation is a local isometry. Since $\cosh(y) \geq 1/y \sinh(y)$ for $y>0$ and $\cosh(y) \leq e^{|y|}$,
\[
\|\dot x(t)\|_{x(t)} \leq e^{\kappa \|e(t)\|}\left(\sup_{t \in [0,1]} \|m(t)\|+\|\dot e(t)\|\right). 
\]
Note that for an $\mathbb R^d$-valued Gaussian process $e$, $\|e(t)\|$ has sub-Gaussian tails and thus $\mathbb E(e^{\kappa}\|e(t)\|)<\infty$. Moreover,  $\|\dot e(t)\|$ has moments of all orders since $\dot e$ is another Gaussian process. Thus $\|\dot x(t)\|_{x(t)}$ is dominated by an integrable random variable. 

With $\nabla_p^2F(p,x(t)): T_pM \times T_pM \to \mathbb R$ as the (0,2)-tensor, the second covariant time derivative is
\[
\nabla^2_t F(p,t)=2[\nabla_p^2F(p,x(t))(\dot x(t), \dot x(t))+\langle \nabla_pF(p,t),P^\sigma_{0 \to 1}\dot x(t)\rangle_p], 
\]
with the geodesic $\sigma:[0,1] \to M$ between $p$ and $x(t)$. The argument for the first time derivative ensures that the second term on the right hand is dominated by an integrable random variable, while \eqref{eq:HessianComp} does the same for the Hessian term. 

Integrable upper bounds for the mixed partial derivatives follow from a combination of those for the time and space derivative derived above. We omit the details. 
\end{proof}
The next Lemma develops a uniform concentration bound for an empirical processes indexed by a parametric class of vector-valued functions. We use standard notation from empirical process theory \citep[e.g.][]{van1996weak}.
\begin{lemma}
\label{lem:uniform_conc}
Let $\mathcal X$ be a metric space, and $\mathcal F_\theta:=\{f_\theta:\mathcal X \to \mathbb R^d, \theta \in \Theta\}$ be a class of measurable functions with $\theta \mapsto f_\theta$ uniformly bounded and Lipschitz for a compact parameter space $\Theta$. Let $X,X_1,\ldots,X_n$ be i.i.d. from $\mathbb P$ on $\mathcal X$, and $\sigma^2:=\sup_{f_\theta \in \mathcal F_\theta}\mathbb E (\|f_\theta(X)\|^2)<\infty$. For the empirical process
\[
\mathbb G_n(f_\theta):=(\mathbb P_n-\mathbb P)f_\theta=\frac{1}{n}\sum_{i=1}^n [f_\theta(X_i)-\mathbb E f_\theta(X_i)],
\]
the following hold:
\begin{enumerate}
    \item [(a)] For $0<\epsilon<1$, there exists a constant $C$ such that with probability at least $1-\epsilon$,
    \[
    \sup_{f_\theta \in \mathcal F_\theta}\|\mathbb G_n(f_\theta)\| \leq 
    2 \mathbb E\left(\sup_{f_\theta \in \mathcal F}\|\mathbb G_n(f_\theta)\|\right)+\sqrt{\frac{2\sigma^2 \log(d/\epsilon)}{n}}+\frac{2C\log(d/\epsilon)}{3n}.
    \]
    \item[(b)] There exists a constant $C>0$ such that
    \[
    \mathbb E\left(\sup_{f_\theta \in \mathcal F_\theta}\|\mathbb G_n(f_\theta)\|\right) \leq C \sqrt{\frac{1}{n}}.
    \]
\end{enumerate}
Therefore, 
\[
\sup_{f_\theta \in \mathcal F_\theta}\|\mathbb G_n(f_\theta)\| =O_P(n^{-1/2}).
\]
\end{lemma}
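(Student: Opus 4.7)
The plan is to combine a Talagrand/Bousquet-type concentration inequality for the high-probability bound in (a) with a symmetrization-and-chaining argument for the expected supremum in (b); the conclusion in (c) then follows immediately by fixing any $\epsilon \in (0,1)$ and noting that the dominant terms from (a) are $O(n^{-1/2})$.

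For part (a), I would exploit that $\theta \mapsto f_\theta$ is uniformly bounded, so there exists $C$ with $\sup_{\theta,x}\|f_\theta(x)\|_\infty \leq C$, and each scalar class $\mathcal F_\theta^{(j)} := \{(f_\theta)_j : \theta \in \Theta\}$, $j=1,\ldots,d$, is a uniformly bounded, measurable class with variance $\sigma_j^2 \leq \sigma^2$. Apply Bousquet's version of Talagrand's inequality to each $\mathcal F_\theta^{(j)}$ at confidence level $\epsilon/d$: with probability at least $1-\epsilon/d$,
\[
\sup_\theta |\mathbb G_n(f_\theta^{(j)})| \leq 2 \mathbb E\sup_\theta |\mathbb G_n(f_\theta^{(j)})| + \sqrt{\tfrac{2\sigma^2 \log(d/\epsilon)}{n}} + \tfrac{2C\log(d/\epsilon)}{3n}.
\]
A union bound over $j=1,\ldots,d$, together with $\|\mathbb G_n(f_\theta)\|^2 = \sum_j (\mathbb G_n(f_\theta^{(j)}))^2$ and the elementary inequality $\sqrt{\sum_j a_j^2} \leq \sqrt{d}\max_j |a_j|$ (absorbed into the leading expectation term via $\mathbb E\sup_\theta\|\mathbb G_n(f_\theta)\| \geq \mathbb E\sup_\theta |\mathbb G_n(f_\theta^{(j)})|$), yields the stated bound.

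For part (b), standard symmetrization gives
\[
\mathbb E \sup_{f_\theta \in \mathcal F_\theta} \|\mathbb G_n(f_\theta)\| \leq 2 \, \mathbb E \sup_{f_\theta \in \mathcal F_\theta} \Bigl\|\tfrac{1}{\sqrt{n}}\sum_{i=1}^n \varepsilon_i f_\theta(X_i)\Bigr\|,
\]
with $\varepsilon_i$ independent Rademacher signs. Since $\Theta$ is compact and $\theta \mapsto f_\theta$ is Lipschitz with some constant $L$ (with respect to a suitable metric on $\mathcal F_\theta$), the $\eta$-covering number of $\mathcal F_\theta$ in the empirical $L^2(\mathbb P_n)$ metric satisfies $N(\eta, \mathcal F_\theta, L^2(\mathbb P_n)) \leq N(\eta/L, \Theta) \leq (C'/\eta)^{\dim \Theta}$. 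Dudley's entropy integral then bounds the Rademacher average by $C n^{-1/2} \int_0^{\operatorname{diam}(\mathcal F_\theta)} \sqrt{\log N(\eta, \mathcal F_\theta, L^2(\mathbb P_n))}\,\text{d}\eta$, which is finite due to the polynomial growth of the covering numbers, yielding the rate $C n^{-1/2}$.

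The main obstacles I anticipate are technical rather than conceptual: first, carefully handling the vector-valued empirical process in (a) to obtain the precise $\log(d/\epsilon)$ dependence rather than a $\sqrt{d}$ factor; second, verifying that the Lipschitz assumption on $\theta \mapsto f_\theta$ transfers to Lipschitz behaviour in the data-dependent $L^2(\mathbb P_n)$ pseudo-metric used for covering in Dudley's argument (this requires either a Lipschitz-in-$x$ envelope or that the uniform Lipschitz bound in $\theta$ is in a metric dominating $L^2(\mathbb P_n)$, both of which follow from the hypotheses). The final statement $\sup_{f_\theta}\|\mathbb G_n(f_\theta)\| = O_P(n^{-1/2})$ is then immediate: fix any $\epsilon_0 \in (0,1)$, use (b) to bound the expectation term in (a) by $Cn^{-1/2}$, and observe that the remaining deviation term is $O(n^{-1/2})$ while the Bernstein term is $O(n^{-1})$.
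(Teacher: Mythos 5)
Your proposal is correct and follows essentially the same route as the paper's proof: scalarize the vector-valued process via projections (you use coordinate projections, the paper uses projections along arbitrary directions $u$), apply Bousquet's form of Talagrand's inequality to each scalar class with a union bound at level $\epsilon/d$, absorb the resulting $\sqrt{d}$ factor into constants, and control the expected supremum in (b) by symmetrization plus Dudley's entropy integral with the covering number bounded through the Lipschitz dependence on the compact parameter $\theta$. The only differences are cosmetic (coordinate versus directional projections, and a minor normalization slip in the Rademacher average that does not affect the $n^{-1/2}$ rate).
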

\begin{proof}
Constants in this proofs are generically denoted by $C$. Let 
\[
\mathcal G^u_\theta:=\{x \mapsto \langle f_\theta(x),u \rangle, f_\theta \in \mathcal F, u \in \mathbb R^d\}
\]
be the class of real-valued functions that are one-dimensional projections of functions in $\mathcal F$ along vectors $u$. From Theorem 2.3 of \cite{bousquet2002bennett}, we get that with probability at least $1-\epsilon$
\[
\sup_{g \in \mathcal G^u_\theta}|(\mathbb P_n-\mathbb P)g| \leq \mathbb E \sup_{g \in G^u_\theta}|(\mathbb P_n-\mathbb P)g|+
\sqrt{\frac{2\sigma_g^2 \log(d/\epsilon)}{n}} +\frac{2C\log(d/\epsilon)}{3n},
\]
where $\sigma^2_g:=\sup_{g \in G^u_\theta}\mathbb E (|g(X)|^2)<\infty$, and $C>0$. 
From the scalar symmetrization inequality \citep[Lemma 2.3.1][]{van1996weak} we then have that with probability at $1-\epsilon/d$
\begin{equation}
\label{eq:uc}
\sup_{g \in G^u_\theta}|(\mathbb P_n-\mathbb P)g| \leq 2\mathbb E \left(\sup_{g \in G^u_\theta}\left|\frac{1}{n}\sum_i \eta_i g(X_i)\right|\right)+
\sqrt{\frac{2\sigma_g^2 \log(d/\epsilon)}{n}} +\frac{2C\log(d/\epsilon)}{3n},
\end{equation}
where $\eta_i$ are i.i.d. Rademacher random variables. Then, the probability that \eqref{eq:uc}  holds for all coordinates, via a bound over the union of each coordinates, is at least $1-\epsilon$. Taking supremum over vectors $u \in \mathbb R^d$ on both sides, and moving the supremum within the expectation on the right-hand side results in
\[
\sup_u\sup_{g \in G^u_\theta}|(\mathbb P_n-\mathbb P)g| \leq 2 \mathbb E \left(\sup_u \sup_{g \in G^u_\theta}\left|\frac{1}{n}\sum_i \eta_i g(X_i)\right|\right)+
\sqrt{\frac{2\sigma^2 \log(d/\epsilon)}{n}} +\frac{2C\log(d/\epsilon)}{3n},
\]
since $\sigma^2_g \leq \sigma^2$. The left-hand side is lower bounded by $  \sup_{f_\theta \in \mathcal F}\|\mathbb G_n(f_\theta)\|$ when $u$ is restricted to lie on the unit sphere in $\mathbb R^d$. Use $\|u\| \leq \sqrt{d} \max_j u_j$ to upper bound the double supremum over the Rademacher sum on the right-hand side by $\mathbb E\left(\sup_{f_\theta \in \mathcal F_\theta}\|\mathbb G_n(f_\theta)\|\right)$ along with terms that introduce at most a $\sqrt{d}$ factor. This proves (a). 

From Dudley's entropy integral bound \citep{van1996weak}, we have,
\begin{align*}
\mathbb E\left(\sup_{f_\theta \in \mathcal F_\theta}\|\mathbb G_n(f_\theta)\|\right) &\leq \frac{C}{\sqrt{n}}\int_{0}^\sigma \sqrt{\log N(\mathcal F, \tilde d, \epsilon)} \text{d}\epsilon\\
\end{align*}
where $N(\mathcal F, \tilde d, \epsilon)$ is the covering number of $\mathcal F_\theta$ with respect to the metric $\tilde d(f_\theta,g_\theta)^2:=\mathbb E\|f_\theta(X)-g_\theta(X)\|^2$. Since functions in $\mathcal F_\theta$ are Lipschitz in $\theta$, $\log N(\mathcal F_\theta, \tilde d, \epsilon) \lesssim d\log(C/\epsilon)$, where $C$ depends on the diameter of the parameter space $\Theta$ and Lipschitz constant of $f_\theta$ \citep[e.g.][p. 271]{van2000asymptotic}. The integral
\[
\int_{0}^{\sigma} \sqrt{d \log (C/\epsilon)}\text{d}\epsilon
\]
is bounded; this is seen by substituting $u^2=d \log (C/\epsilon)$ and using integration by parts. This proves (b). 
\end{proof}

\begin{lemma}
\label{lem:small_prob}
Assume $m^\uparrow=\fre$. Under assumptions A1-A5, if the sample paths of a rolled Gaussian process $x$ are continuous a.s., then $\sup_{t \in [0,1]}\rho(x(t),\hatfre(t))=O_P(1)$. 
\end{lemma}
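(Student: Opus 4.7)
The aim is to control $\rho(x(t),\hatfre(t))$ by splitting it, via the triangle inequality, as
\[
\rho(x(t),\hatfre(t)) \;\leq\; \rho(x(t),\fre(t)) \;+\; \rho(\fre(t),\hatfre(t)),
\]
and showing each term is $O_P(1)$ uniformly in $t$. The first term measures how far the realization $x$ is from the Fr\'echet mean curve it was wrapped around; the second term measures how far the sample Fr\'echet mean curve deviates from the population one. Neither bound is a rate bound; we only need tightness.

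\textbf{First term.} Since $m^\uparrow=\fre$ by assumption, the construction of $x$ as the wrapping of $y_b=Uz$ with respect to $\fre$ gives $x(t)=\exp_{\fre(t)}(V^{\fre}(t))$ for the covariant Gaussian vector field $V^{\fre}$ of Proposition~\ref{prop:vector_field}. Because the exponential map is a radial isometry from the origin, $\rho(x(t),\fre(t))\leq \|V^{\fre}(t)\|_{\fre(t)}$, and since both $U$ and the parallel transport maps entering $V^{\fre}$ are linear isometries, $\|V^{\fre}(t)\|_{\fre(t)}=\|z(t)-m(t)\|=:\|e(t)\|$. By A3 the centered Gaussian process $e$ has continuous sample paths on $[0,1]$, so $\sup_{t}\|e(t)\|<\infty$ a.s., and by Borell-TIS (using $\sup_{t}\|m(t)\|<\infty$ and $\sigma^2<\infty$ from A4) this supremum has a Gaussian tail; in particular $\sup_{t\in[0,1]}\|e(t)\|=O_P(1)$.

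\textbf{Second term.} The basic metric inequality $\rho(p,y)\geq |\rho(p,q)-\rho(q,y)|$ yields, after squaring,
\[
\rho^2(p,y)\;\geq\;\rho^2(p,q)\;-\;2\rho(p,q)\,\rho(q,y)\;+\;\rho^2(q,y).
\]
Apply this with $p=\hatfre(t)$, $q=\fre(t)$, $y=x_i(t)$ and average over $i=1,\dots,n$ to obtain
\[
F_n(\hatfre(t),t)\;\geq\;\rho^2(\hatfre(t),\fre(t))\;-\;2\rho(\hatfre(t),\fre(t))\,\bar R_n(t)\;+\;F_n(\fre(t),t),
\]
where $\bar R_n(t):=n^{-1}\sum_{i=1}^n\rho(\fre(t),x_i(t))$. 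Since $\hatfre(t)$ is a minimizer of $F_n(\cdot,t)$, $F_n(\hatfre(t),t)\leq F_n(\fre(t),t)$, which rearranges to $\rho(\hatfre(t),\fre(t))\leq 2\bar R_n(t)$. Using the bound from the first term, $\bar R_n(t)\leq n^{-1}\sum_{i=1}^n\|e_i(t)\|$, and thus
\[
\sup_{t\in[0,1]}\rho(\hatfre(t),\fre(t))\;\leq\;\frac{2}{n}\sum_{i=1}^n\sup_{t\in[0,1]}\|e_i(t)\|.
\]
The random variables $\sup_t\|e_i(t)\|$ are i.i.d.\ with finite mean (Borell-TIS), so the right-hand side converges a.s.\ to $2\,\mathbb{E}\!\sup_t\|e(t)\|<\infty$ by the strong law, hence is $O_P(1)$.

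\textbf{Combining.} Adding the two bounds via the triangle inequality completes the proof. The argument uses only the minimization property of $\hatfre(t)$ and the metric-space triangle inequality, so neither the manifold curvature bound A2 nor the Hessian positivity A5 are invoked here; those stronger hypotheses are needed later to upgrade this $O_P(1)$ control to the sharp $O_P(n^{-1/2})$ rate in Theorem~\ref{thm:C1_convergence}. The only step requiring care is verifying that the coarse triangle-inequality expansion really is tight enough to survive taking the supremum over $t$, which works because it eliminates the unbounded summand $F_n(\fre(t),t)$ on both sides of the minimization inequality before any $t$-dependent quantities are supremized.
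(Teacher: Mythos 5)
Your proof is correct, and for the key second term it takes a genuinely different route from the paper. Both proofs start from the same triangle-inequality decomposition $\rho(x(t),\hatfre(t))\le\rho(x(t),\fre(t))+\rho(\fre(t),\hatfre(t))$ and handle the first term identically, via the isometry $\|V^{\fre}(t)\|_{\fre(t)}=\|e(t)\|$ and the Borell--TIS inequality. For the second term, however, the paper simply invokes Theorem~\ref{thm:C1_convergence}(c) to conclude that the event $B_n=\{\sup_t\rho(\hatfre(t),\fre(t))\le r_0/2\}$ has probability tending to one, whereas you derive $\rho(\hatfre(t),\fre(t))\le 2\bar R_n(t)$ from the reverse triangle inequality plus the minimizing property of $\hatfre(t)$, and then apply the strong law to $n^{-1}\sum_i\sup_t\|e_i(t)\|$. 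Your argument is the standard quadruple-inequality bound for Fr\'echet means in a metric space; it is elementary, needs neither A2 nor A5 nor any of the Donsker-class machinery behind Theorem~\ref{thm:C1_convergence}, and makes the lemma logically independent of that theorem (so it could be stated and used earlier). What the paper's version buys in exchange is a concrete quantitative event: on $A_\tau\cap B_n$ it localizes $\hatfre$ inside the tubular neighbourhood $\mathbb T_{\gamma_{\text{Fre}}}(r_0)$ and bounds $\rho(\hatfre(t),x(t))$ by the explicit constant $\tau+r_0/2$ below the injectivity radius, and it is precisely this event structure (not merely the $O_P(1)$ conclusion) that the proof of Theorem~\ref{thm:rates} reuses to place the pairs $(x(t),\hatfre(t))$ in the compact set $S_\tau$ where $\exp^{-1}$ has uniformly bounded derivatives. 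As a proof of the stated $O_P(1)$ claim, yours is complete and arguably cleaner.
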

\begin{proof}
From Proposition \ref{prop:vector_field},
\[
t \mapsto V^{\fre}(t):=P^{\fre}_{0 \to t}P^c_{0 \to 1}(y_b(t)-m_b(t)), \quad t \in [0,1],
\]
is a Gaussian vector field along $\gamma_{\text{Fre}}$, such that  $\|V^{\fre}(t)\|_{\fre(t)}=\|y_b(t)-m_b(t)\|_b=\|z(t)-m(t)\|$, since parallel transport maps and the frame $U$ are isometries. Let $z(t)=m(t)+e(t)$, where $e$ is a zero-mean Gaussian process with covariance $K$. From the 

With $r_0>0$ as the radius of the tubular neighbourhood $\mathbb T_{\gamma_{\text{Fre}}}(r_0)$ in \eqref{eq:tubular}, under assumption A4, choose $\tau$ such that $0<\beta<\tau <r_0/2$, where $\beta:=E(\|e(t)\|) < \infty$, from the Borell-TIS inequality \citep[e.g.][Chapter 2]{adler2007random}. Let $A_\tau:=\{\sup_{t \in [0,1]}\|e(t)\| \leq \tau \}$. Again, from the Borell-TIS inequality, 
\[
\mathbb P(A^c_\tau)\leq \exp{\left\{-\frac{(\tau -\beta)^2}{2\sigma^2}\right\}},
\]
where $\sigma^2$ is the uniform bound on the covariance of $z$ in assumption A4. 

Define $B_n:=\{\sup_{t \in [0,1]} \rho(\hatfre(t), \fre(t)) \leq r_0/2\}$. From Theorem \ref{thm:C1_convergence} $\mathbb P(B_n) \to 1, n \to \infty$. On the set $A_\tau \cap B_n$, all points $\hatfre(t)$ and $x(t)$ are contained within a subset of $M$ within which the inverse exponential map is well-defined. Thus, on $A_\tau \cap B_n$, 
\begin{align*}
\rho(\hatfre(t), x(t)) &\leq \rho(\hatfre(t), \fre(t))+\rho(\fre(t), x(t))\\
& \leq \frac{r_0}{2}+\tau \leq \text{inj}(\hatfre([0,1])),
\end{align*}
since $\rho(x(t),\fre(t))=\|e(t)\| \leq \tau$ upon noting that $[0,1] \ni t \mapsto \exp_p(tv)$ is minimizing for $\|v\|_p < \text{inj}(p)$, and $\rho(\hatfre(t),\fre(t)) \leq r_0/2$. Thus $\|\exp^{-1}_{\hatfre(t)}(x(t))\|_{\hatfre(t)} \leq \tau+\frac{r_0}{2}$ for every $t$, and 
\[
\sup_{t \in [0,1]}\|\exp^{-1}_{\hatfre(t)}(x(t))\|_{\hatfre(t)} \leq \tau+\frac{r_0}{2}.
\]
Hence for every $\epsilon>0$, we can choose $\tau$ and $n$ large enough so that $\mathbb P(A^c_\tau) \leq \epsilon/2$ and $\mathbb P(B^c_n) \leq \epsilon/2$, which ensures that
\[
\mathbb P\left(\sup_{t \in [0,1]}\|\exp^{-1}_{\hatfre(t)}(x(t))\|_{\hatfre(t)} \leq \tau+\frac{r_0}{2}\right) \geq 1-\epsilon. 
\]
\end{proof}

\section{Proofs of main results}
\label{sec:proofs}
\subsection{Proof of Theorem \ref{thm:frechet:mean}}

 For every $t$, suppose $F(y):=E\{\rho^2(y,x(t))\} < \infty$ at a point $y$, and hence on all $M$. For part $(i)$, when every point $p \in M$ has empty cut locus the function $y \mapsto F(y)$ is geodesically convex on $M$ with a global minimizer that is the unique stationary point, and the Fr\'echet mean of $x(t)$ is hence unique \citep[e.g.][]{afsari2011riemannian}. On such $M$, $\exp_{\gamma(t)}:T_{\gamma(t)}M \to M$ is bijective, and it is thus sufficient to show that
$E\{\exp^{-1}_{\gamma(t)}(x(t))\}=0$ \citep[e.g.][]{le2001locating}.
Since 
$
\exp^{-1}_{\gamma(t)}(x(t))=  P^\gamma_{t \leftarrow 0}P^c_{1 \leftarrow 0}\{y(t)-m_b(t)\},
$
and the parallel transports are linear isometries, the result follows since $E\{y(t)-m_b(t)\}=0$. 

For part $(ii)$ note that the function $F$ defined above may be written as
\begin{align*}
F(y)&=\int_M \rho^2(y,x(t)) \text{d}(\exp_{\gamma(t)})_\#\lambda(x(t))=\int_{M}\rho^2(\sigma_{\gamma(t)}(y),\sigma_{\gamma(t)}(x(t))) \text{d}(\exp_{{\gamma(t)}})_\#\lambda(x(t)),
\end{align*}
since $M$ is symmetric and $\sigma_{\gamma(t)}$ is an isometry on $M$. Let $v \in T_{\gamma(t)}M$ be such that $\exp_{\gamma(t)}(v)=x(t)$. The distribution of $x(t)$ is of the form $(\exp_{\gamma(t)})_\# \lambda$ of a distribution $\lambda$ with Lebesgue-density $f$ that is symmetric around the origin in $T_{\gamma(t)}M$ such that $f(v)=f(-v)$; this holds regardless of whether $v$ is unique when $x(t) \notin \mathcal C(\gamma(t))$ or a measurable selection is made from the set of all $v$ such that $\exp_{\gamma(t)}(v)=x(t)$. As a consequence, the distribution of $x(t)$ possesses geodesic symmetry. Since the isometry $\sigma_{\gamma(t)}$ reverses geodesics through ${\gamma(t)}$, we have the relation $\exp_{\gamma(t)}(-v)=\sigma_{\gamma(t)}(\exp_{\gamma(t)}(v))$. Therefore, 
\begin{align*}
F(y)&=\int_{M}\rho^2(\sigma_{\gamma(t)}(y),\sigma_{\gamma(t)}(x(t))) \text{d}(\exp_{{\gamma(t)}})_\#\lambda(x(t))\\
&=\int_{M}\rho^2(\sigma_{\gamma(t)}(y),\sigma_{\gamma(t)}(x(t))) \text{d}(\exp_{{\gamma(t)}})_\#\lambda(\sigma_{\gamma(t)}(x(t)))\\
&=F(\sigma_{\gamma(t)}(y)),
\end{align*}
and $F:M \to \mathbb R$ is symmetric about $\gamma(t)$.  

We now establish that is suffices to show that $\gamma(t)$ is the stationary point of $F$ in order for it to be the Fr\'echet mean. Assume otherwise, and let $\gamma(t)$ be a stationary point of $F$ whose unique minimizer is $q$ distinct from $\gamma(t)$. Since $F$ is symmetric about $\gamma(t)$, we have then that $\sigma_{\gamma(t)}(q)$ must also minimizer $F$, which contradicts the uniqueness assumption of the Fr\'echet mean, unless $q=\sigma_{\gamma(t)}(q)$. This cannot happen since by assumption $q \notin \mathcal C(\gamma(t))$. The reasoning from proof of $(i)$ can be reused to show that $\gamma(t)$ is a stationary point of $F$, and complete the proof. 




\subsection{Proof of Proposition \ref{prop:vector_field}}
$E^\gamma=\{e_1,\ldots,e_d\}$ be a frame along $\gamma$, defined in Appendix \ref{app:ortho_frame}. For every $t$, note that $y_b(t)-m_b(t) \sim N_d(0,K_b(t,t))$. For every $t$, the orthonormal frame $E^\gamma(t): T_{\gamma(0)}M \to T_{\gamma(t)}M$ is a linear operator. Consequently,  
\[
V^\gamma(t) \sim N_d\left(0, E^\gamma(t) K_b(t,t)E^\gamma(t)^\top\right)
\]
 with values in the vector space $(T_{\gamma(t)}M,\langle \cdot, \cdot \rangle_{\gamma(t)})$, where $E^\gamma(t)^\top$ is the corresponding adjoint operator.  Consider for distinct times $t,t'$ the random vector $(V^\gamma(t), V^\gamma(t'))$ with values in the direct sum inner product space $\left(T_{\gamma(t)}M \oplus T_{\gamma(t')}M{,}\langle \cdot, \cdot \rangle_{\gamma(t)}+\langle \cdot, \cdot \rangle_{\gamma(t')}\right)$. Then, $(V^\gamma(t), V^\gamma(t'))$ is a zero-mean Gaussian with covariance
 \begin{equation*}
 \begin{pmatrix}
 \Omega_{tt} &  \Omega_{tt'}\\
 \Omega_{tt'}^\top & \Omega_{t't'}
 \end{pmatrix},
 \end{equation*}
  where $\Omega_{ab}:=E^\gamma(a) K_b(a,b)E^\gamma(b)^\top $ (see Appendix \ref{sec:Gaussian})
The discussion above holds for any curve $\gamma$, and thus for $\gamma=m_b^\uparrow$. 
 
 It is now straightforward to extend the above to the case of $r$ times $t_1,\ldots,t_r$ involving a direct sum of $r$ tangent spaces and corresponding $r \times r$ block covariance operator comprising $r \choose 2$ linear operators $\Omega_{t_i t_j}$ for $i,j=1,\ldots,r$. This establishes the existence of finite-dimensional marginal Gaussian distributions corresponding to the choices $m$ and $k$. If the family of finite-dimensional marginal distributions constitutes a \emph{projective} family, existence and uniqueness of a Gaussian vector field $t \mapsto V^\gamma(t)$ along $\gamma$ follows from Kolmogorov's extension theorem \citep[e.g.,][]{bhattacharya2007basic}. Proof of projectivity follows straightforwardly from Proposition 16 in \cite{hutchinson2021vector}, who consider general Gaussian vector fields not necessarily along curves $\gamma$.

\subsection{Proof of Proposition \ref{prop:equivariance}}
\label{sec:proof:of:equivariance}
Given a mean $m$ and the covariance $K$ for the $\mathbb R^d$-valued Gaussian process $z$, the pair $(b,U)$ determines $m_b=Um$ and $K_b=UKU^\top$ in $T_bM$, the mean and kernel, respectively, of the Gaussian process $y_b(t)$ in $T_bM$; the unique rolling of $m_b$ onto $M$ then results in $m_b^\uparrow$. Upon setting $\gamma=m_b^\uparrow$, the Gaussian vector field $V^\gamma(t)$ is obtained by action of the frame $E^\gamma(t)$, determined by $E^\gamma(0)=(P^c_{1 \leftarrow 0}u_1,\ldots,P^c_{1 \leftarrow 0}u_d)$, on the Gaussian process $y_b(t)$; in other words, the map $(U,m,K) \to V^{\gamma}$ is injective. The curve $x$ on $M$ is then defined as $x(t)=\exp_{\gamma(t)}(V^\gamma(t))$.

Choose another point $b' \in M$ with basis $U'$ for $T_{b'}M$, resulting in the process $x'$ on $M$. From the discussion above, $x'$ will have the same distribution as $x$ if $m'$ and $k'$ can be chosen such that the parallel transport of $U'm'$ and $U'K'U^{'\top}$ to $T_bM$ coincides with $m_b=Um$ and $K_b=UKU^\top$. The isometric parallel transport map $P^{\eta}_{1 \leftarrow 0}: T_{b'}M \to T_bM$ along a curve $\eta:[0,1] \to M$ connecting $b'$ and $b$ ensures that $P^{\eta}_{1 \leftarrow 0}U'$ is an orthonormal basis on $T_bM$. Thus choosing $m':=(P^{\eta}_{1 \leftarrow 0}U')^{-1}Um=U^{'\top} P^{\eta}_{0 \leftarrow 1}Um$ in $\mathbb R^d$  gives us the desired result; a similar argument applies for $K'$ as well. The chosen $m'$ and $K'$ are unique, since the parallel vector fields along $\eta$ determined by $U$ and $U'$ that generate the parallel transport maps, respectively,  from $T_{b'}M$ to $T_bM$ and vice versa are unique. If $\eta$ is chosen to be the piecewise curve consisting of a segment connecting $b'$ to $\gamma(0)$ then a segment connecting $\gamma(0)$ to $b$ then the result follows.

\subsection{Proof of Theorem \ref{thm:C1_convergence}}
\label{app:proof_C1}
From Lemma \ref{lem:tubular}, the tubular neighbourhood $\mathbb T_{\gamma_{\text{Fre}}}(r_0)$ in \eqref{eq:tubular} is compact with $r_0>$ and assumption A4 implies that $|\sec{(\mathbb T_{\gamma_{\text{Fre}}}(r_0))}| \leq \kappa$ for some $\kappa \geq 0$. For $p,q \in \mathbb T_{\gamma_{\text{Fre}}}(r_0)$, by Lemma \ref{lemma:Lipschitz}, the inverse exponential is  smooth in both $p$ and $q$, so that
\[
\|\exp^{-1}_p(q)\|_p \leq C_0, \quad \|\nabla_p\exp^{-1}_p(q)\|_p \leq C_1, \quad \|\nabla^2_p\exp^{-1}_p(q)\|_p \leq C_2, 
\]
uniformly over $p$ and $q$ with constants that depend only on the geometry of $M$ via $\kappa$. Consider
\[
t \mapsto V^{\fre}(t):=P^{\fre}_{0 \to t}P^c_{0 \to 1}(y_b(t)-m_b(t)), \quad t \in [0,1],
\]
the Gaussian vector field along $\fre$ from Proposition \ref{prop:vector_field}. By assumption A3, we note that $\sup_t\mathbb E\|V^{\fre}(t)\|_{\fre(t)}<\infty $, and the operator norm of the covariance $\Sigma(s,t):T_{\fre(s)}M \to T_{\fre(t)}M$, $\Sigma(s,t)$ is uniformly (in $s$ and $t$) bounded by the constant $\sigma^2$, since the frame $U$ and parallel transport maps are isometries; this can be seen by expressing $\Sigma(s,t)$ in local coordinates as in the proof of Proposition \ref{prop:vector_field}.     

\subsubsection{Proof of (a)}
Let $S:= \mathbb T_{\gamma_{\text{Fre}}}(r_0) \times [0,1] \to \mathbb R$. Consider the map $\Phi:S \to \mathbb R$
\[
\Phi(p,t)=\mathbb E [\exp^{-1}_p(x(t))].
\]
From Lemma \ref{lemma:Lipschitz}, note that $(p,q) \mapsto \exp^{-1}_p(q)$ is $C^2$ with bounded derivatives within $ S$. Then, due to Lemma \ref{lemma:FrechetFunction_deriv}, the map $\Phi$ is also $C^2$ on $S$. 
 
The map $\Phi$ is the restriction of $(p,t)\mapsto -\frac{1}{2}\nabla_p F(p,t)$ to $S$. Then, $\nabla_p \Phi(p,t)=-\frac{1}{2}\nabla^2_p F(p,t)=-\frac{1}{2}H_t$, the Hessian of $F$. By assumption A1, $\fre(t)$ for each $t$ is a strict local minimum of $p\mapsto F(p,t)$. And, by assumption A5, for each $t$, the (1,1)-tensor $H_t: T_{\fre(t)}M \to T_{\fre(t)}M$ is positive definite and invertible. 

Fix $t_0 \in [0,1]$. By the implicit function theorem, there exists an open interval $I_{t_0} \in [0,1]$ and a unique $C^2$ curve $\phi_{t_0}:I_{t_0} \to M$, such that for all $t \in I_{t_0}$
\[
\Phi(\phi_{t_0}(t),t)=0, \quad \phi_{t_0}(t_0)=\fre(t_0);
\]
see Appendix \ref{app:c_k curves} for a definition of $C^2$ curves using local coordinates. 
 Choose a finite subcover $\{I_{t_1},\ldots,I_{t_l}\}$ of $[0,1]$ (due to its compactness) so that on each $I_{t_j}$, $\fre$ equals a $C^2$ curve $\phi_{t_j}$, and on overlaps $I_{t_j} \cap I_{t_k}$ the functions $\phi_{t_j}$ and $\phi_{t_k}$ must agree; this is so, since both solve $\Phi(p,t)=0$, coincide at some point, and thus everywhere by uniqueness. Thus $\fre$ is $C^2$ on the subcover $\{I_{t_1},\ldots,I_{t_l}\}$, and hence $C^2$ on $[0,1]$.

\subsubsection{Proof of (b)}
Consider the empirical version
\[
(p,t) \mapsto \Phi_{n}(p,t):=\frac{1}{n} \sum_{i=1}^n \exp^{-1}_p(x_i(t)),
\]
of $\Phi$, 
which is $C^2$ on $S$, and satisfies
\[
\Phi_{n}(\hatfre(t), t)=0 \text{ a.s.}
\]
From the arguments in part (a) above, it follows that $\Phi_{n}$ is also $C^2$ on $S$.  If
\begin{align*}
\sup_{(p,t) \in S}\|\Phi_{n}(p,t)-\Phi(p,t)\|_p &\to 0 \text{ a.s.}\\
\sup_{(p,t) \in S}\|\nabla_p\Phi_{n}(p,t)-\nabla_p\Phi(p,t)\|_p &\to 0 \text{ a.s.},
\end{align*}
for large enough $n$, using arguments from part (a) and the implicit function theorem, $\hatfre \in \mathbb T_{\gamma_{\text{Fre}}}(r_0)$ a.s and is $C^2$. It thus suffices to show that the parametric function classes
\begin{align*}
\mathcal F&:=\{f_\theta(x):=\exp_p^{-1}(x(t)), \theta:=(p,t) \in S \}\\
\mathcal G&:=\{g_\theta(x):=\nabla_p\exp_p^{-1}(x(t)), \theta:=(p,t) \in S\}
\end{align*}
are each Glivenko-Cantelli \cite{van1996weak}. Note that $S$ is compact, and from the properties of the inverse exponential in Lemma \ref{lemma:Lipschitz}, the claim readily follows, since both $\mathcal F$  and $\mathcal G$ are equicontinuous and uniformly bounded. See Lemma \ref{lem:uniform_conc} for a precise rate of convergence. 

\subsubsection{Proof of (c)}
From Lemma \ref{lem:uniform_conc}, the parametric function classes $\mathcal F$ and $\mathcal G$ in the proof of (b) are in fact Donsker. Thus, 
\begin{align*}
\sup_{(p,t) \in S}\|\Phi_{n}(p,t)-\Phi(p,t)\|_p &=O_P(n^{-1/2})\\
\sup_{(p,t) \in S}\|\nabla_p\Phi_{n}(p,t)-\nabla_p\Phi(p,t)\|_p &=O_P(n^{-1/2}).
\end{align*}
For a fixed $t$, let $v_n(t):=\exp^{-1}_{\fre(t)}(\hatfre(t))$. A Taylor expansion of $p \mapsto \Phi_{n}(p,t)$ gives
\[
0=\Phi_{n}(\hatfre(t),t)=\Phi_{n}(\fre(t),t)+\nabla_{\fre(t)}\Phi_{n}(\fre(t),t)(v_n(t))+R_n(t),
\]
where $\|R_n(t)\|_{\fre(t)} \leq C \|v_n(t)\|_{\fre(t)}$, since from Lemma \ref{lemma:Lipschitz}, $p \mapsto \nabla_p\Phi_{n}(p,t)$ is uniformly bounded within $\mathbb T_{\gamma_{\text{Fre}}}(r_0)$. Subtracting $\Phi(\fre(t),t)=0$ and re-arranging gives
\begin{align*}
H_tv_n(t)=&-(\Phi_{n}(\fre(t),t)-\Phi(\fre(t),t))\\
&-(\nabla_{\fre(t)}\Phi_{n}(\fre(t),t)-\nabla_{\fre(t)}\Phi(\fre(t),t))v_n(t)\\
&-R_n(t), 
\end{align*}
where $H_t$ is the Hessian operator from part (a). From assumption A5, $\|H_t^{-1}\|_{\text{op}} \leq 1/\lambda_{\min}$, and when combined with the fact that $\mathcal F$ and $\mathcal G$ are Donsker, we get
\[
\|v_n(t)\|_{\fre(t)} \leq \frac{1}{\lambda_{\min}}\left(C_1n^{-1/2}+C_2n^{-1/2}\|v_n(t)\|_{\fre(t)} +C_3\|v_n(t)\|_{\fre(t)} ^2\right)
\]
uniformly in $t$. Re-arranging, 
\[
\left(1-1/\lambda_{\min}C_2n^{-1/2}\right)\|v_n(t)\|_{\fre(t)} \leq \frac{1}{\lambda_{\min}}\left(C_1n^{-1/2}+C_3\|v_n(t)\|^2_{\fre(t)}\right).
\]
For large $n$, the term $(1-1/\lambda_{\min}C_2n^{-1/2})$ is bounded below by 1/2. Thus,
\[
\|v_n(t)\|_{\fre(t)} \leq \frac{2}{\lambda_{\min}}\left(C_1n^{-1/2}+C_3\|v_n(t)\|_{\fre(t)} ^2\right),
\]
which determines a fixed point inequality $bu^2-u+an^{-1/2} \geq 0$ in the variable $u$ for suitable constants $a, b$. The roots when equating the left-hand side to zero are
\[
u=\frac{1}{2b}(1\pm \sqrt{1-4abn^{-1/2}}),
\]
which for large $n$ gives $u \approx an^{-1/2}$ so that any $u$ satisfying the fixed-point inequality must satisfy $u \leq 2an^{-1/2}$. As a consequence, $\|v_n(t)\|_{\fre(t)}=O_P(n^{-1/2})$. Noting that $\|v_n(t)\|_{\fre(t)}=\rho(\hatfre(t),\fre(t))$ gives
\[
\sup_{t \in [0,1]}\rho(\hatfre(t),\fre(t))=O_P(n^{-1/2}).
\]

Next, differentiating the identity $\Phi_{n}(\hatfre(t),t)=0$ covariantly with respect to $t$
\[
\nabla_{\fre(t)}\Phi_{n}(\hatfre(t),t))(\nabla_t \hatfre(t))+\nabla_t\Phi_{n}(\hatfre(t),t))=0;
\]
in similar fashion, for the identity $\Phi(\fre(t),t)=0$, we get
\[
\nabla_{\fre(t)}\Phi(\fre(t),t))(\nabla_t \fre(t))+\nabla_t\Phi(\fre(t),t))=0.
\]
Subtracting the two we obtain,
\begin{align*}
H_t\big(\nabla_t\hatfre(t) - \nabla_t\fre(t)\big)
 &= -[\nabla_{\fre(t)}\Phi_{n} - \nabla_{\fre(t)}\Phi](\hatfre(t),t)
      (\nabla_t\hatfre(t)) \\
 &\quad -[\nabla_t\Phi_{n} - \nabla_t\Phi](\hatfre(t),t) \\
 &\quad -[\nabla_{\fre(t)}\Phi(\hatfre(t),t)-H_t]
      (\nabla_t\hatfre(t)) \\
 &\quad -[\nabla_t\Phi(\hatfre(t),t)-\nabla_t\Phi(\fre(t),t)].
\end{align*}
Differences between tangent vectors in $T_{\fre(t)}M$ and $T_{\hatfre(t)}M$ in the above expression is to be understood either in normal coordinates of an arbitrary point within $\mathbb T_{\gamma_{\text{Fre}}}(r_0)$ or by identifying the tangent spaces by parallel transporting vectors along the unique geodesic $\sigma_t:[0,1] \to \mathbb T_{\gamma_{\text{Fre}}}(r_0) \subset M$ from $\hatfre(t)$ to $\fre(t)$. 

The first term \(O_p(n^{-1/2})\) since the function class $\mathcal G$ defined in the proof of (b) is Donsker. The function class $\{h_\theta(x):=\nabla_t\exp_p^{-1}(x(t)), \theta \in S \}$ is also Donsker owing to the uniform boundedness and equicontinuity of the class, and the second term is \(O_p(n^{-1/2})\). The last two terms are \(O_p(n^{-1/2})\)
due to the \(C^2\)-smoothness of \(\Phi\)
combined with \(\sup_t \rho(\hatfre(t),\fre(t)) = O_p(n^{-1/2})\). Since  \(\|H_t^{-1}\|\le\lambda_{\min}^{-1}\),
multiplying both sides by \(H_t^{-1}\) yields
\[
  \sup_{t\in[0,1]}
  \|P_{0 \to 1}^{\sigma_t}\nabla_t\widehat\gamma_n(t)-\nabla_t\gamma(t)\|
  = O_p(n^{-1/2}).
\]
We have thus established that
\[
\sup_{t \in [0,1]}\rho(\hatfre(t),\fre(t))+\|P_{0 \to 1}^{\sigma_t}\nabla_t\widehat\gamma_n(t)-\nabla_t\gamma(t)\|
  = O_p(n^{-1/2}).
\]
This completes the proof. 

\subsection{Proof of Theorem \ref{thm:rates}}
Claims in (a) and (b) are proved if it can be shown that for any $i=1,\ldots,n$,
\[
\sup_{t \in [0,1]}\|x^{\downarrow\hat \gamma_{\text{Fre}}}_{b,i}(t)-x^{\downarrow
\gamma_{\text{Fre}}}_{b,i}(t)\|_b=O_P(n^{-1/2}).
\]
Recall from the discussion following assumption A1 that both $\exp^{-1}_{\hatfre(t)}(x_i(t))$ and $\exp^{-1}_{\fre(t)}(x_i(t))$ are well-defined with probability one. In what follows, for simplicity, we suppress the subscript $i$ in $x^{\downarrow\hat \gamma_{\text{Fre}}}_{b,i}$ and in $x_i(t)$.

Denote by $\sigma_t:[0,1] \to M$ the minimizing geodesic from $\sigma_t(0)=\hatfre(t)$ to $\sigma_t(1)=\fre(0)$. Then, 
\begin{align*}
x^{\downarrow\hat \gamma_{\text{Fre}}}_{b}(t)-x^{\downarrow
\gamma_{\text{Fre}}}_{b}(t)
&=P^{\hat c}_{0 \leftarrow 1}P^{\hatfre}_{0 \leftarrow t}\exp^{-1}_{\hatfre(t)}x(t)-P^{c}_{0 \leftarrow 1}P^{\sigma_t}_{0 \to 1}\exp^{-1}_{\hatfre(t)}x(t) \\
&\quad + P^{c}_{0 \leftarrow 1}P^{\sigma_t}_{0 \to 1}\exp^{-1}_{\hatfre(t)}x(t)-P^{c}_{0 \leftarrow 1}P^{\fre}_{0 \leftarrow t}\exp^{-1}_{\fre(t)}x(t)\\
&=:E(t)\exp^{-1}_{\hatfre(t)}x(t)+B(t),
\end{align*}
where 
\begin{align*}
E(t)&:=P^{\hat c}_{0 \leftarrow 1}P^{\hatfre}_{0 \leftarrow t}-P^{c}_{0 \leftarrow 1}P^{\sigma_t}_{0 \to 1},\\
B(t)&:=P^{c}_{0 \leftarrow 1}\left(P^{\sigma_t}_{0 \to 1}\exp^{-1}_{\hatfre(t)}x(t)-P^{\fre}_{0 \leftarrow t}\exp^{-1}_{\fre(t)}x(t)\right).
\end{align*}
Then, 
\[
\sup_{t \in [0,1]}\|x^{\downarrow\hat \gamma_{\text{Fre}}}_{b}(t)-x^{\downarrow
\gamma_{\text{Fre}}}_{b}(t)\|_b \leq \sup_{t \in [0,1]}\|E(t)\|_{\text{op}}\sup_{t \in [0,1]}\|\exp^{-1}_{\hatfre(t)}x(t)\|_{\hatfre(t)}+\sup_{t \in [0,1]}\|B(t)\|_{\text{op}},
\]
and since $\sup_{t \in [0,1]}\|\exp^{-1}_{\hatfre(t)}x(t)\|_{\hatfre(t)}=O_P(1)$ by Lemma \ref{lem:small_prob}, it suffices to prove that the operator bounds on $E(t)$ and $B(t)$ are both $O_P(n^{-1/2})$ uniformly in $t$. 

Consider first $E(t)$. Adding and subtracting $P^c_{0 \leftarrow 1}P^{\sigma_0}_{0 \to 1}P^{\hatfre}_{0 \leftarrow t}$, we obtain
\[
E(t)=P^{c}_{0 \leftarrow 1}\left(P^{\sigma_0}_{0 \to 1}P^{\hatfre}_{0 \leftarrow t}-P^{\sigma_t}_{0 \to 1}\right)+\left(P^{\hat c}_{0 \leftarrow 1}-P^c_{0 \leftarrow 1}P^{\sigma_0}_{0 \to 1}\right)P^{\hatfre}_{0 \leftarrow t},
\]
so that
\begin{equation}
\label{eq:tri}
\|E(t)\|_{\text{op}} \leq \|P^{\sigma_0}_{0 \to 1}P^{\hatfre}_{0 \leftarrow t}-P^{\sigma_t}_{0 \to 1}\|_{\text{op}}+\|P^{\hat c}_{0 \leftarrow 1}-P^c_{0 \leftarrow 1}P^{\sigma_0}_{0 \to 1}\|_{\text{op}}.
\end{equation}

Note that $P^{\sigma_0}_{0 \to 1}P^{\hatfre}_{0 \leftarrow t}-P^{\sigma_t}_{0 \to 1}$ is the difference operator that encodes holonomy around the geodesic triangle with vertices $\fre(0), \hatfre(0)$ and $\hatfre(t)$. Similarly, $P^{\hat c}_{0 \leftarrow 1}-P^c_{0 \leftarrow 1}P^{\sigma_0}_{0 \to 1}$ is the difference operator pertaining to the triangle formed by $b, \hatfre(0), \fre(0)$. From Lemma \ref{lemma:holonomy}, the right hand side of \eqref{eq:tri} is upper bounded by 
\[
C\kappa\Big([\rho(\hatfre(0),\fre(0))\rho(\hatfre(0),b)][\rho(\hatfre(0),\fre(0))\rho(\hatfre(0),\hatfre(t))]\Big)+o_P(A_1)+o_P(A_2),
\] 
where $A_1$ and $A_2$ are areas of the two geodesic triangles. From Theorem \ref{thm:C1_convergence} (c), $\rho(\hatfre(0),\fre(0))=O_P(n^{-1/2})$ and $\rho(\hatfre(0),\hatfre(t))=O_P(1)$. The point $b$ may be chosen so that the distance $\rho(\hatfre(0),b)$ is bounded in probability; indeed if $b \in \mathbb T_{\gamma_{\text{Fre}}}(r_0)$ then $\rho(\hatfre(0),b) \leq r_0$ almost surely, since from \ref{thm:C1_convergence} (b), $\hatfre \in \mathbb T_{\gamma_{\text{Fre}}}(r_0)$ almost surely. The areas $A_1$ and $A_2$ are upper bounded by the product of the lengths of any two of the sides of the triangles. Constants $C$ and $\kappa$ do not depend on $t$, and consequently, 
\[
\sup_{t \in [0,1]}\|E(t)\|_{\text{op}}=O_P(n^{-1/2}),
\]
which upon combining with the result of Lemma \ref{lem:small_prob} proves that
\[
\sup_{t \in [0,1]}\|E(t)\|_{\text{op}}\|\exp^{-1}_{\hatfre(t)}x(t)\|_{\fre(t)}=O_P(n^{-1/2}). 
\]

We now move to $B(t)$. Since $\|B(t)\|_{\text{op}}$ is preserved under the isometry $P^c_{0 \leftarrow 1}$, we focus on $P^{\sigma_t}_{0 \to 1}\exp^{-1}_{\hatfre(t)}x(t)-P^{\fre}_{0 \leftarrow t}\exp^{-1}_{\fre(t)}x(t)$. Adding and subtracting $P^{\fre}_{0 \leftarrow t}P^{\beta_t}_{0 \to 1}\exp^{-1}_{\hatfre(t)}x(t)$, where $\beta_t:[0,1] \to M$ is the minimizing geodesic from $\beta_t(0)=\hatfre(t)$ to $\beta_t(1)=\fre(t)$, we obtain
\begin{align*}
&\left(P^{\sigma_t}_{0 \to 1} - P^{\fre}_{0 \leftarrow t}P^{\beta_t}_{0 \to 1}\right)\exp^{-1}_{\hatfre(t)}x(t)+P^{\fre}_{0 \leftarrow t}\left(P^{\beta_t}_{0 \to 1}\exp^{-1}_{\hatfre(t)}x(t)-\exp^{-1}_{\fre(t)}x(t)\right)\\
&=\Lambda_1(t)\exp^{-1}_{\hatfre(t)}x(t)+P^{\fre}_{0 \leftarrow t}\Lambda_2(t),
\end{align*}
where $\Lambda_1(t):=P^{\sigma_t}_{0 \to 1} - P^{\fre}_{0 \leftarrow t}P^{\beta_t}_{0 \to 1}:T_{\hatfre(t)}M \to T_{\fre(0)}M$ and $\Lambda_2(t):=P^{\beta_t}_{0 \to 1}\exp^{-1}_{\hatfre(t)}x(t)-\exp^{-1}_{\fre(t)}x(t) \in T_{\fre(t)}M$. Since $P^{\fre}_{0 \leftarrow t}$ is an isometry, and by Lemma \ref{lem:small_prob} since $\exp^{-1}_{\hatfre(t)}x(t)=O_P(1)$, it suffices to show that $\|\Lambda_1(t)\|_{\text{op}}$ and $\|\Lambda_2(t)\|_{\fre(t)}$ are both $O_P(n^{-1/2})$, uniformly in $t$. 

The operator $\Lambda_1(t)$ is a difference operator related to the holonomy of the geodesic triangle formed by $\hatfre(t), \fre(t), \fre(0)$. From Lemma \ref{lemma:holonomy}, repeating the argument above when bounding $E(t)$, we get 
\[
\sup_{t \in [0,1]}\|\Lambda_1(t)\|_{\text{op}}=O_P(n^{-1/2}). 
\]
To bound $\Lambda_2(t)$, we restrict attention to the set $A_\tau \cap B_n$ defined in the proof of Lemma \ref{lem:small_prob} for  $\tau<r_0/2$, where $r_0$ is the radius of the tubular region $\mathbb T_{\gamma_{\text{Fre}}}(r_0)$. As in the proof of Lemma \ref{lem:small_prob}, under assumption A5, choose $0<\zeta<\tau<r_0/2$ where $\zeta:=\sup_{t \in [0,1]}\|m(t)\|$ so that $\mathbb P(A^c_\tau) \leq \exp{\left(-\frac{(\tau-\zeta)^2}{2 \sigma^2}\right)}$, and $\mathbb P(B_n) \to 1$ as $n \to \infty$. Thus, on the high probability set $A_\tau \cap B_n$, pairs realisations of the tuple $(x,\hatfre)$ lie within the compact set
\[
S_\tau:=\left\{(p,q) \in M \times M: q \in \mathbb T_{\gamma_{\text{Fre}}}(r_0), \rho(p,q) \leq r_0\right\}.
\]
Within $S_\tau$ from Lemma \ref{lemma:Lipschitz}, the inverse exponential map $p \mapsto \exp^{-1}_p(\cdot)$ is smooth with bounded first and second derivatives. Thus, 
\[
\sup_{t \in [0,1]}\|\Lambda_2(t)\|_{\fre(t)}=\sup_{t \in [0,1]}\|P^{\beta_t}_{0 \to 1}\exp^{-1}_{\hatfre(t)}x(t)-\exp^{-1}_{\fre(t)}x(t)\|_{\fre(t)} \leq C \sup_{t \in [0,1]}\rho(\hatfre(t), \fre(t)),
\]
and thus of order $O_P(n^{-1/2})$ due to Theorem \ref{thm:C1_convergence} (c). The proof is now complete. 
\subsection{Proof of Proposition \ref{prop:equivariance:in:terms:of:basis}}

The $j$th column of $H(X; \Gamma)$ and $H'(X; \Gamma)$, by Definition \ref{def:unwrap_coordinates}, respectively equal
\begin{align*}
    &U^\top \left[\exp_b^{-1} \{ \gamma(0) \} + P^c_{0 \leftarrow 1} \gamma^\downarrow(t_j) + P^c_{0 \leftarrow 1} P^\gamma_{0 \leftarrow t_j} \exp^{-1}_{\gamma(t_j)}\left\{ x(t_j)\right\}\right]; \\
    & U'^\top \left[\exp_{b'}^{-1} \{ \gamma(0) \} + P^{c'}_{0 \leftarrow 1} \gamma^\downarrow(t_j) + P^{c'}_{0 \leftarrow 1} P^\gamma_{0 \leftarrow t_j} \exp^{-1}_{\gamma(t_j)}\left\{ x(t_j)\right\}\right].
\end{align*}
Direct substitution shows that left-multiplying the former by $A = (U')^\top P^{c'}_{0 \leftarrow 1} P^c_{1\leftarrow 0} (U^\top)^{-1}$ and adding $a = (U')^\top \left[ \exp_{b'}^{-1} \left\{ \gamma(0)\right\}
- P^{c'}_{0 \leftarrow 1} P^c_{1 \leftarrow 0 } (U^\top)^{-1} \exp_b^{-1} \{\gamma(0) \}\right]$ leads to the latter. Hence
$H' (X_i; \Gamma)= a 1_r^\top + A H (X_i; \Gamma) \sim\mathcal{MN}\left(a 1_r^\top +AM_w \Phi, A U_w A^\top, \Phi^\top V_w \Phi \right)$; and the result follows by writing $1_r^\top = 1_k^\top \Phi$, using that the constant function is in the span of $\Phi$. The $a$ and $A$ specific to the result can be computed from the expressions above, with 
$\gamma(0)=\exp_b\{M_w \phi(t_1)\}$. 




\subsection{Proof of Proposition \ref{prop:mhat_fre}}

The solution to \eqref{eqn:M_w_hat:Fre} is
$
\hat{M}^{\text{Fre}}_w=\frac{1}{n}\sum_{i=1}^n{H(X_i ; \hat{\Gamma}_\text{Fre})}\Phi^-. 
$
The $j$th column of $H(X_i; \hat{\Gamma}_\text{Fre})$ is
\begin{equation}
U^\top \left[
{(\hat{\gamma}_\text{Fre})}^\downarrow_b(t_j) + P^c_{0 \leftarrow 1} P^{\hat{\gamma}}_{0 \leftarrow t_j} \exp^{-1}_{\hat{\gamma}_\text{Fre}(t_j)}
\left\{ 
x_i(t_j)\right\}
\right],
\label{eqn:proof:prop:M:hat:Fre:a}
\end{equation}
and by definition of the 
sample Fr\'echet mean, $\hat \gamma_{\text{Fre}}$, for every $j$, 
\begin{equation}
\frac{1}{n}\sum_{i=1}^n \exp^{-1}_{\hat{\gamma}_\text{Fre}(t_j)}\{x_i(t_j)\}=0 \in T_{\hat \gamma_\text{Fre}(t_j)}M.
\label{eqn:proof:prop:M:hat:Fre:b}
\end{equation}
Thus, using \eqref{eqn:proof:prop:M:hat:Fre:a} and \eqref{eqn:proof:prop:M:hat:Fre:b}, and that the parallel transports are linear isometries, we have that
\[
\hat{M}^{\text{Fre}}_w=\frac{1}{n}\sum_{i=1}^n{H(X_i ; \hat{\Gamma}_\text{Fre})}\Phi^-=U^\top(\hat{\Gamma}_\text{Fre})_b^{\downarrow}
\Phi^- =H(\hat{\Gamma}_\text{Fre}; \hat{\Gamma}_\text{Fre})\Phi^-.
\]

\bibliographystyle{plainnat}
\bibliography{paper-ref}

\end{document}